\date{}
\renewcommand{\uppercasenonmath}[1]{}
\numberwithin{equation}{section} \theoremstyle{plain}
\newtheorem{lem}{Lemma}[section]
\newtheorem{cor}[lem]{Corollary}
\newtheorem{prop}[lem]{Proposition}
\newtheorem{thm}[lem]{Theorem}
\newtheorem{cond}[lem]{Condition}
\newtheorem{definition}[lem]{Definition}
\newtheorem{Ex}[lem]{Example}
\newtheorem{Quest}[lem]{Question}
\newtheorem{Property}[lem]{Property}
\newtheorem{Properties}[lem]{Properties}
\newtheorem{Subprops}{}[lem]
\newtheorem{Para}[lem]{}
\newtheorem{rem}[lem]{Remark}
\newtheorem*{ack*}{ACKNOWLEDGEMENTS}
\newcommand{\pf}{\noindent\begin {proof}}
\newcommand{\epf}{\end{proof}}
\newcommand{\C}{\mathcal{C}}
\newcommand{\h}{{\rm H}}
\begin{document}
\begin{center}
{\large  \bf  Proper classes and Gorensteinness in extriangulated categories}

\vspace{0.5cm}  Jiangsheng Hu$^{a}$, Dongdong Zhang$^{b}$\footnote{Corresponding author. Jiangsheng Hu was supported by the NSF of China (Grant No. 11501257, 11671069, 11771212),  Qing Lan Project of Jiangsu Province and Jiangsu Government Scholarship for Overseas Studies (JS-2019-328).   Panyue Zhou was supported by the National Natural Science Foundation of China (Grant No. 11901190 and 11671221), the Hunan Provincial Natural Science Foundation of China (Grant No. 2018JJ3205) and the Scientific Research Fund of Hunan Provincial Education Department (Grant No. 19B239).} and Panyue Zhou$^{c}$ \\
\medskip

\hspace{-4mm}$^{a}$School of Mathematics and Physics, Jiangsu University of Technology
 Changzhou 213001, China\\
 $^b$Department of Mathematics, Zhejiang Normal University,
\small Jinhua 321004, China\\
$^c$College of Mathematics, Hunan Institute of Science and Technology, Yueyang, Hunan 414006, China\\
E-mails: jiangshenghu@jsut.edu.cn, zdd@zjnu.cn and panyuezhou@163.com \\
\end{center}

\bigskip
\centerline { \bf  Abstract}
\medskip

\leftskip10truemm \rightskip10truemm \noindent
 \hspace{1em}Extriangulated categories were introduced by Nakaoka and Palu as a simultaneous generalization of exact categories and triangulated categories. A notion of proper class in an extriangulated category is defined in this paper. Let $\mathcal{C}$ be an extriangulated category and $\xi$ a proper class in $\mathcal{C}$. We prove that $\mathcal{C}$ admits a new extriangulated structure. This construction gives
 extriangulated categories which are neither exact categories nor triangulated categories. Moreover, we introduce and study $\xi$-Gorenstein projective objects in $\mathcal{C}$ and demonstrate that $\xi$-Gorenstein projective objects  share some basic properties with Gorenstein projective objects in module categories or in triangulated categories. In particular, we refine a result of Asadollahi and Salarian [Gorenstein objects in
triangulated categories, J. Algebra 281(2004), 264-286]. As an application, the admissible model structure on extriangulated categories is obtained.\\[2mm]
{\bf Keywords:} Extriangulated category; proper class; Gorenstein projective object; model structure.\\
{\bf 2010 Mathematics Subject Classification:} 18E30; 18E10; 16E05; 18G20; 18G35.

\leftskip0truemm \rightskip0truemm
\section { \bf Introduction}


Relative homological algebra has been formulated by Hochschild in
categories of modules and later by Heller and Butler and Horrocks in more general categories with a relative abelian structure. Beligiannis developed in \cite{Bel1} a relative version of homological algebra in triangulated categories in analogy to relative homological algebra in abelian categories, in which the notion of a proper class of exact sequences is replaced by a proper class of triangles. By specifying a class of triangles $\mathcal{E}$, which is called a proper class of triangles, he introduced $\mathcal{E}$-projective and $\mathcal{E}$-injective objects.

In general, it is not so easy to find a proper class $\mathcal{E}$ of triangles in a triangulated category having enough $\mathcal{E}$-projectives or $\mathcal{E}$-injectives. One of interesting examples, due to Krause \cite{Krause} and Beligiannis \cite{Bel1}, is as follows:

\begin{itemize} \item Assume that $(\mathcal{T},\Sigma, \Delta)$ is a compactly generated triangulated category, where $\Sigma$ is the suspension functor and $\Delta$ is the triangulation. Then the class $\mathcal{E}$ of pure triangles
(which is induced by the compact objects) is proper and $\mathcal{T}$ has enough $\mathcal{E}$-projectives or $\mathcal{E}$-injectives.
\end{itemize}

\noindent Note that $\mathcal{E}$ is a proper class in the above triangulated category $(\mathcal{T},\Sigma, \Delta)$. It follows that  the triangle $$\Sigma^iA\xrightarrow{~(-1)^i\Sigma^if~}\Sigma^iB\xrightarrow{~(-1)^i\Sigma^ig~}\Sigma^iC\xrightarrow{~(-1)^i\Sigma^ih~}\Sigma^{i+1} A$$ belongs to $\mathcal{E}$ for all $i\in \mathbb{Z}$ provided that $\xymatrix@C=1.5em{A\ar[r]^f&B\ar[r]^g&C\ar[r]^{h\ \ \ \ }&\Sigma A}$ belongs to $\mathcal{E}$. However,  one can get that $(\mathcal{T},\Sigma, \mathcal{E})$ is not a triangulated category because not every morphism $f\colon X\rightarrow Y$ in $\mathcal{T}$ can be embeded into the pure triangle $$X\xrightarrow{~f~}Y\rightarrow Z\rightarrow \Sigma X.$$
This means that there exist a triangulated category $(\mathcal{T},\Sigma, \Delta)$ and a proper class $\mathcal{E}$ in $\mathcal{T}$ such that
$(\mathcal{T},\Sigma, \mathcal{E})$ is not triangulated.

Recently, the notion of an extriangulated category was introduced in \cite{NP}, which is a simultaneous generalization of
exact category and triangulated category. Of course, there are extriangulated
categories which are neither exact categories nor triangulated categories, see \cite[Proposition 3.30]{NP} and \cite[Example 4.14]{ZZ}.

We know that extriangulated categories generalize both triangulated and exact categories.
Motivated by this, for any extriangulated categoriy $\mathcal{C}$, we wonder whether or not there exists a proper class $\xi$ of $\mathbb{E}$-triangles such that $\mathcal{C}$ is equipped with the proper class $\xi$ of $\mathbb{E}$-triangles, which is again extriangulated. This is the first aim of this paper.

Auslander and Bridger \cite{AB} introduced modules of G-dimension zero for finitely generated modules over a commutative Noetherian ring as a generalization of finitely generated projective modules. Enochs and Jenda \cite{EJ1} introduced Gorenstein projective modules that generalize the notion of G-dimension zero to any module over any ring. Dually they defined Gorenstein injective modules, and they developed a relative homological algebra in the category of modules. Furthermore, Beligiannis \cite{Bel2} introduced the concept of an $\mathcal{X}$-Gorenstein object { in an additive category $\mathcal{C}$} for a contravariantly finite subcategory $\mathcal{X}$ of $\mathcal{C}$ such that any $\mathcal{X}$-epic has kernel in $\mathcal{C}$ as a generalization of modules of G-dimension zero in the sense of Auslander and Bridger.
In an attempt to develop the Beligiannis' theory, Asadollahi and Salarian \cite{AS1} introduced and studied $\xi$-Gorenstein projective and injective objects in triangulated categories { with a proper class $\xi$}, which share basic properties with Gorenstein projective and injective modules.
Our next aim of this paper is to contribute in developing the above mentioned homological algebra in extriangulated categories.

We now outline the results of the paper. In Section 2, we summarize some preliminaries
and basic facts about extriangulated categories which will be used throughout the paper.

In Section 3, for a given extriangulated category $(\mathcal{C}, \mathbb{E}, \mathfrak{s})$, we define a notion of a proper class of $\mathbb{E}$-triangles, denoted by $\xi$.
If an extriangulated category $\mathcal{C}$ is equipped with a proper class of $\mathbb{E}$-triangles $\xi$, we show that $\mathcal{C}$ admits a new extriangulated structure (see Theorem \ref{thma}). This construction gives extriangulated categories which are neither exact categories nor triangulated categories (see Remark \ref{rem:3.4}).

In Section 4, let $\xi$ be a proper class of $\mathbb{E}$-triangles in an extriangulated category $(\mathcal{C}, \mathbb{E}, \mathfrak{s})$. We introduce
$\xi$-projective objects, $\xi$-$\mathcal{G}$projective objects and their duals. Denote by $\mathcal{P}(\xi)$ the class of $\xi$-projective objects and by $\mathcal{GP}(\xi)$ the class of $\xi$-$\mathcal{G}$projective objects in $\mathcal{C}$. We prove that the category $\mathcal{GP}(\xi)$ is full, additive, closed under isomorphisms, direct summands provided that $(\mathcal{C}, \mathbb{E}, \mathfrak{s})$ satisfies Condition (WIC) (see Theorem \ref{thm2} and Condition \ref{cond:4.11}). As a corollary, we refine a result of Asadollahi and Salarian in \cite{AS1} (see Remark \ref{rem:4.19}).

In Section 5, by using the class of $\xi$-$\mathcal{G}$projective objects in an extriangulated category $(\mathcal{C}, \mathbb{E}, \mathfrak{s})$, we relate an invariant called $\xi$-$\mathcal{G}$projective dimension ($\xi$-$\mathcal{G}$${\rm pd} A$ for short) to any object $A$ of $\mathcal{C}$ and show that this invariant has some nice properties (see Propositions \ref{pro6} and \ref{thm3}). As an application, the admissible model structure on extriangulated categories is obtained, which { generalizes} a result of Yang \cite[Theorem 5.7]{Yang} (see Theorem \ref{thm:5.4} and Corollary \ref{yang}).

\section{\bf Preliminaries}
Let us briefly recall some definitions and basic properties of extriangulated categories from \cite{NP}. Throughout this paper, we assume that $\mathcal{C}$ is an additive category.

 \begin{definition}\cite[Definition 2.1]{NP} {\rm Suppose that $\mathcal{C}$ is equipped with an additive bifunctor $$\mathbb{E}: \mathcal{C}^{op}\times \mathcal{C}\rightarrow {\rm Ab},$$  where ${\rm Ab}$ is the category of abelian groups. For any objects $A, C\in\mathcal{C}$, an element $\delta\in \mathbb{E}(C,A)$ is called an $\mathbb{E}$-extension. Thus formally, an $\mathbb{E}$-extension is a triple $(A,\delta, C)$. For any $A, C\in\mathcal{C}$, the zero element $0\in\mathbb{E}(C, A)$ is called the split $\mathbb{E}$-extension.

 Let $\delta\in \mathbb{E}(C, A)$ be any $\mathbb{E}$-extension. By the functoriality, for any $a\in\mathcal{C}(A, A')$ and $c\in\mathcal{C}(C', C)$, we have $\mathbb{E}$-extensions
 \begin{center}$\mathbb{E}(C, a)(\delta)\in\mathbb{E}(C, A')$ ~and~ $\mathbb{E}(c, A)(\delta)\in\mathbb{E}(C', A)$.\end{center}
 We abbreviately denote them by $a_*\delta$ and $c^*\delta$ { respectively}. In this terminology, we have $$\mathbb{E}(c, a)(\delta)=c^*a_*\delta=a_*c^*\delta$$ in $\mathbb{E}(C', A')$.
{ In the following, we always assume that the  bifunctor $\mathbb{E}$ exists. }}
 \end{definition}

 \begin{definition}\cite[Definition 2.3]{NP} {\rm Let $\delta\in\mathbb{E}(C, A)$ and $\delta'\in\mathbb{E}(C', A')$ be any pair of $\mathbb{E}$-extensions. A morphism $(a, c): \delta\rightarrow \delta'$ of $\mathbb{E}$-extensions is a pair of morphisms $a\in\mathcal{C}(A, A')$ and $c\in\mathcal{C}(C, C')$ in $\mathcal{C}$ satisfying the equality $$a_*\delta=c^*\delta'.$$}
 \end{definition}

 \begin{definition}\cite[Definition 2.6]{NP} {\rm Let $\delta=(A, \delta, C)$ and $\delta'=(A', \delta', C')$ be any pair of $\mathbb{E}$-extensions. Let
 \begin{center}$\xymatrix{C\ar[r]^{\iota_C\ \ \ }&C\oplus C'&C'\ar[l]_{\qquad\iota_{C'}}}$
 and
 $\xymatrix{A&A\oplus A'\ar[l]_{ p_A\;\;}\ar[r]^{\quad p_{A'} }&A'}$\end{center}
be coproduct and product in $\mathcal{C}$, respectively. Remark that, by the additivity of $\mathbb{E}$, we have a natural isomorphism
\begin{center} $\mathbb{E}(C\oplus C', A\oplus A')\simeq\mathbb{E}(C, A)\oplus\mathbb{E}(C, A')
\oplus\mathbb{E}(C', A)\oplus\mathbb{E}(C', A')$.\end{center}
Let $\delta\oplus \delta'\in\mathbb{E}(C\oplus C', A\oplus A')$ be the element corresponding to $(\delta, 0, 0, \delta')$ through this isomorphism. This is the unique element which satisfies
\begin{center}$\mathbb{E}(\iota_C, p_A)(\delta\oplus \delta')=\delta,\ \mathbb{E}(\iota_C, p_{A'})(\delta\oplus \delta')=0, \ \mathbb{E}(\iota_{C'}, p_A)(\delta\oplus \delta')=0,\ \mathbb{E}(\iota_{C'}, p_{A'})(\delta\oplus \delta')=\delta'$.\end{center}}
 \end{definition}

 \begin{definition}\cite[Definition 2.7]{NP} {\rm Let $A, C\in\mathcal{C}$ be any pair of objects. Two sequences of morphisms in $\mathcal{C}$
 \begin{center} $\xymatrix{A\ar[r]^x&B\ar[r]^y&C}$ and $\xymatrix{A\ar[r]^{x'}&B'\ar[r]^{y'}&C}$\end{center}
 are said to be equivalent if there exists an isomorphism $b\in\mathcal{C}(B, B')$ which makes the following diagram commutative.
 $$\small\xymatrix@C=1.2em@R=0.5cm{&&B\ar[dd]^b_\simeq\ar[drr]^y&&\\
 A\ar[urr]^x\ar[drr]_{x'}&&&&C\\
 &&B'\ar[urr]_{y'}}$$

 We denote the equivalence class of $\xymatrix{A\ar[r]^x&B\ar[r]^y&C}$ by $\xymatrix{[A\ar[r]^x&B\ar[r]^y&C].}$}
 \end{definition}

 \begin{definition}\cite[Definition 2.8]{NP} {\rm
 {(1)} For any $A, C\in\mathcal{C}$, we denote as
 $$0=[A\xrightarrow{~\tiny\begin{bmatrix}1\\0\end{bmatrix}~}A\oplus C\xrightarrow{\tiny\begin{bmatrix}0&1\end{bmatrix}}C].$$

{(2)} For any $\xymatrix{[A\ar[r]^x&B\ar[r]^y&C]}$ and $\xymatrix{[A'\ar[r]^{x'}&B'\ar[r]^{y'}&C']}$, we denote as
\begin{center} $\xymatrix{[A\ar[r]^x&B\ar[r]^y&C]}\oplus$$\xymatrix{[A'\ar[r]^{x'}&B'\ar[r]^{y'}&C']}=$$\xymatrix{[A\oplus A'\ar[r]^{x\oplus x'}&B\oplus B'\ar[r]^{y\oplus y'}&C\oplus C'].}$\end{center}}
 \end{definition}

 \begin{definition}\label{def1}\cite[Definition 2.9]{NP} {\rm
  Let $\mathfrak{s}$ be a correspondence which associates an equivalence class $$\mathfrak{s}(\delta)=\xymatrix@C=0.8cm{[A\ar[r]^x
 &B\ar[r]^y&C]}$$ to any $\mathbb{E}$-extension $\delta\in\mathbb{E}(C, A)$. This $\mathfrak{s}$ is called a {\it realization} of $\mathbb{E}$, if it satisfies
 the following condition $(\star)$. In this case, we say that the sequence $\xymatrix{A\ar[r]^x&B\ar[r]^y&C}$ realizes $\delta$, whenever it satisfies $\mathfrak{s}(\delta)=$$\xymatrix{[A\ar[r]^x &B\ar[r]^y&C].}$

 $(\star)$ Let $\delta\in\mathbb{E}(C, A)$ and $\delta'\in\mathbb{E}(C', A')$ be any pair of $\mathbb{E}$-extensions, with
 \begin{center} $\mathfrak{s}(\delta)=$$\xymatrix{[A\ar[r]^x &B\ar[r]^y&C]}$ and $\mathfrak{s}(\delta')=$$\xymatrix{[A'\ar[r]^{x'} &B'\ar[r]^{y'}&C'].}$\end{center} Then, for any morphism $(a, c): \delta\rightarrow \delta'$, there exists $b\in\mathcal{C}(B, B')$ which makes the following diagram commutative
 $$\xymatrix{A\ar[r]^{x}\ar[d]_{a}&B\ar[r]^{y}\ar[d]_{b}&C\ar[d]_{c}\\
 A'\ar[r]^{x'}&B'\ar[r]^{y'}&C'.}$$
 In the above situation, we say that the triplet $(a, b, c)$ realizes $(a, c)$.}
 \end{definition}

 \begin{definition} \cite[Definition 2.10]{NP} {\rm
 Let $\mathcal{C}, \mathbb{E}$ be as above. A realization of $\mathbb{E}$ is said to be {\it additive}, if it satisfies the following conditions.

{\rm (i)} For any $A, C\in\mathcal{C}$, the split $\mathbb{E}$-extension $0\in\mathbb{E}(C, A)$ satisfies $\mathfrak{s}(0)=0.$

{\rm (ii)} For any pair of $\mathbb{E}$-extensions $\delta\in\mathbb{E}(C, A)$ and $\delta'\in\mathbb{E}(C', A')$, we have \begin{center}$\mathfrak{s}(\delta\oplus\delta')=\mathfrak{s}(\delta)\oplus\mathfrak{s}(\delta')$.\end{center}}
 \end{definition}

 \begin{definition} \cite[Definition 2.12]{NP} {\rm A triplet $(\mathcal{C}, \mathbb{E}, \mathfrak{s})$ is called an {\it extriangulated category} if it satisfies the following conditions.

{\rm(ET1)} $\mathbb{E}: \mathcal{C}^{op}\times \mathcal{C}\rightarrow \rm{Ab}$ is an additive bifunctor.

{\rm (ET2)} $\mathfrak{s}$ is an additive realization of $\mathbb{E}$.

{\rm (ET3)} Let $\delta\in\mathbb{E}(C, A)$ and $\delta'\in\mathbb{E}(C', A')$ be any pair of $\mathbb{E}$-extensions, realized as
 \begin{center} $\mathfrak{s}(\delta)=$$\xymatrix{[A\ar[r]^x &B\ar[r]^y&C]}$ and $\mathfrak{s}(\delta')=$$\xymatrix{[A'\ar[r]^{x'} &B'\ar[r]^{y'}&C'].}$\end{center} For any commutative square
 $$\xymatrix{A\ar[r]^{x}\ar[d]_{a}&B\ar[r]^{y}\ar[d]_{b}&C\\
 A'\ar[r]^{x'}&B'\ar[r]^{y'}&C'}$$
 in $\mathcal{C}$, there exists a  morphism $(a, c): \delta\rightarrow \delta'$ satisfying $cy=y'b$.

{\rm (ET3)}$^{\rm op}$ Dual of {\rm (ET3)}.

{\rm (ET4)} Let $\delta\in\mathbb{E}(D,A)$ and $\delta'\in\mathbb{E}(F, B)$ be $\mathbb{E}$-extensions realized by
 \begin{center} $\xymatrix{A\ar[r]^f&B\ar[r]^{f'}&D}$ and $\xymatrix{B\ar[r]^g&C\ar[r]^{g'}&F}$\end{center}
 respectively. Then there exist an object $E\in\mathcal{C}$, a commutative diagram
 $$\xymatrix{A\ar[r]^f\ar@{=}[d]&B\ar[r]^{f'}\ar[d]_g&D\ar[d]^d\\
A\ar[r]^h&C\ar[r]^{h'}\ar[d]_{g'}&E\ar[d]^e\\
&F\ar@{=}[r]&F
}$$
in $\mathcal{C}$, and an $\mathbb{E}$-extension $\delta^{''}\in\mathbb{E}(E, A)$ realized by $\xymatrix{A\ar[r]^h&C\ar[r]^{h'}&E,}$
which satisfy the following compatibilities.

{\rm (i)} $\xymatrix{D\ar[r]^d&E\ar[r]^{e}&F}$ realizes $f'_*\delta'$,

{\rm (ii)} $d^*\delta^{''}=\delta$,

{\rm (iii)} $f_*\delta^{''}=e^*\delta'$.

{\rm (ET4)}$^{\rm op}$ Dual of {\rm (ET4)}.
} \end{definition}

We will use the following terminology.

\begin{definition}{ \cite[Definitions 2.15 and 2.19]{NP}} {\rm
 Let $(\mathcal{C}, \mathbb{E}, \mathfrak{s})$ be an extriangulated category.

{\rm (1)} A sequence $\xymatrix@C=1cm{A\ar[r]^x&B\ar[r]^{y}&C}$ is called a {\it conflation} if it realizes some $\mathbb{E}$-extension $\delta\in\mathbb{E}(C, A)$.
In this case, $x$ is called an {\it inflation} and $y$ is called a {\it deflation}.

{\rm (2)} If a conflation $\xymatrix@C=0.6cm{A\ar[r]^x&B\ar[r]^{y}&C}$ realizes $\delta\in\mathbb{E}(C, A)$, we call the pair
$(\xymatrix@C=0.6cm{A\ar[r]^x&B\ar[r]^{y}&C}, \delta)$ an {\it $\mathbb{E}$-triangle}, and write it in the following way.
\begin{center} $\xymatrix{A\ar[r]^x&B\ar[r]^{y}&C\ar@{-->}[r]^{\delta}&}$\end{center}
We usually do not write this ``$\delta$" if it is not used in the argument.

{\rm (3)} Let $\xymatrix{A\ar[r]^x&B\ar[r]^{y}&C\ar@{-->}[r]^{\delta}&}$ and $\xymatrix{A'\ar[r]^{x'}&B'\ar[r]^{y'}&C'\ar@{-->}[r]^{\delta'}&}$
be any pair of $\mathbb{E}$-triangles. If a triplet $(a, b, c)$ realizes $(a, c): \delta\rightarrow \delta'$, then we write it as
 $$\xymatrix{A\ar[r]^{x}\ar[d]_{a}&B\ar[r]^{y}\ar[d]_{b}&C\ar[d]_{c}\ar@{-->}[r]^{\delta}&\\
 A'\ar[r]^{x'}&B'\ar[r]^{y'}&C'\ar@{-->}[r]^{\delta'}&}$$
 and call $(a, b, c)$ a {\it morphism} of $\mathbb{E}$-triangles.}
\end{definition}

\begin{lem}\label{lem1} \emph{(see \cite[Proposition 3.15]{NP})} Let $(\mathcal{C}, \mathbb{E},\mathfrak{s})$ be an extriangulated category. Then the following hold.

\emph{(1)} Let $C$ be any object, and let $\xymatrix@C=2em{A_1\ar[r]^{x_1}&B_1\ar[r]^{y_1}&C\ar@{-->}[r]^{\delta_1}&}$ and $\xymatrix@C=2em{A_2\ar[r]^{x_2}&B_2\ar[r]^{y_2}&C\ar@{-->}[r]^{\delta_2}&}$ be any pair of $\mathbb{E}$-triangles. Then there is a commutative diagram
in $\mathcal{C}$
$$\xymatrix{
    & A_2\ar[d]_{m_2} \ar@{=}[r] & A_2 \ar[d]^{x_2} \\
  A_1 \ar@{=}[d] \ar[r]^{m_1} & M \ar[d]_{e_2} \ar[r]^{e_1} & B_2\ar[d]^{y_2} \\
  A_1 \ar[r]^{x_1} & B_1\ar[r]^{y_1} & C   }
  $$
  which satisfies $\mathfrak{s}(y^*_2\delta_1)=\xymatrix@C=2em{[A_1\ar[r]^{m_1}&M\ar[r]^{e_1}&B_2]}$ and
  $\mathfrak{s}(y^*_1\delta_2)=\xymatrix@C=2em{[A_2\ar[r]^{m_2}&M\ar[r]^{e_2}&B_1].}$

  \emph{(2)} Let $A$ be any object, and let $\xymatrix@C=2em{A\ar[r]^{x_1}&B_1\ar[r]^{y_1}&C_1\ar@{-->}[r]^{\delta_1}&}$ and $\xymatrix@C=2em{A\ar[r]^{x_2}&B_2\ar[r]^{y_2}&C_2\ar@{-->}[r]^{\delta_2}&}$ be any pair of $\mathbb{E}$-triangles. Then there is a commutative diagram
in $\mathcal{C}$
$$\xymatrix{
     A\ar[d]_{x_2} \ar[r]^{x_1} & B_1 \ar[d]^{m_2}\ar[r]^{y_1}&C_1\ar@{=}[d] \\
  B_2 \ar[d]_{y_2} \ar[r]^{m_1} & M \ar[d]^{e_2} \ar[r]^{e_1} & C_1 \\
  C_2 \ar@{=}[r] & C_2 &   }
  $$
  which satisfies $\mathfrak{s}(x_{2*}\delta_1)=\xymatrix@C=2em{[B_2\ar[r]^{m_1}&M\ar[r]^{e_1}&C_1]}$ and $\mathfrak{s}(x_{1*}\delta_2)=\xymatrix@C=2em{[B_1\ar[r]^{m_2}&M\ar[r]^{e_2}&C_2].}$
\end{lem}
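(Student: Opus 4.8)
The plan is to prove part (1) directly from the axioms and then deduce (2) by reading (1) in the opposite extriangulated category $(\mathcal{C}^{\rm op},\mathbb{E}^{\rm op},\mathfrak{s}^{\rm op})$, in which $\mathbb{E}^{\rm op}(A,C)=\mathbb{E}(C,A)$ and inflations and deflations are interchanged. This turns the common third term $C$ of (1) into the common first term $A$ of (2), and the pullbacks $y_i^{*}$ into the pushouts $x_{i*}$, so that (2) is literally (1) transcribed into $\mathcal{C}^{\rm op}$. Hence it suffices to treat (1). The object $M$ will be constructed as a realization of the pullback $y_2^{*}\delta_1\in\mathbb{E}(B_2,A_1)$, and the heart of the matter is that the transverse column of the resulting square simultaneously realizes $y_1^{*}\delta_2\in\mathbb{E}(B_1,A_2)$.

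First I would fix a conflation $\mathfrak{s}(y_2^{*}\delta_1)=[A_1\xrightarrow{m_1}M\xrightarrow{e_1}B_2]$, which exists since $\mathfrak{s}$ is a realization of $\mathbb{E}$. The morphism-of-extensions condition $(1_{A_1})_{*}(y_2^{*}\delta_1)=y_2^{*}\delta_1$ holds trivially, so $(1_{A_1},y_2)\colon y_2^{*}\delta_1\to\delta_1$ is a morphism of $\mathbb{E}$-extensions; feeding it into the realization condition $(\star)$ produces $e_2\in\mathcal{C}(M,B_1)$ with $e_2m_1=x_1$ and $y_1e_2=y_2e_1$. This yields the object $M$, the middle row, the map $e_2$, the commutativity of the lower-right square, and the first of the two asserted identities. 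Only the map $m_2\colon A_2\to M$ and the second identity remain.

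To produce $m_2$ I would apply (ET4)$^{\rm op}$ to the composable deflations $M\xrightarrow{e_1}B_2\xrightarrow{y_2}C$, arising from the conflation realizing $y_2^{*}\delta_1$ and from $A_2\xrightarrow{x_2}B_2\xrightarrow{y_2}C$ realizing $\delta_2$. This returns an object $N$, a conflation $N\to M\xrightarrow{y_2e_1}C$, a conflation $A_1\to N\to A_2$ realizing $x_2^{*}y_2^{*}\delta_1=(y_2x_2)^{*}\delta_1$, and the compatibility equations dual to conditions (ii) and (iii) of (ET4). Since $y_2x_2=0$ the extension $(y_2x_2)^{*}\delta_1$ vanishes, so $A_1\to N\to A_2$ splits and $N\cong A_1\oplus A_2$; composing the summand inclusion $A_2\to N$ with $N\to M$ defines $m_2\colon A_2\to M$, and the commutativities coming from (ET4)$^{\rm op}$ give $e_1m_2=x_2$.

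The main obstacle is the remaining identification: that $A_2\xrightarrow{m_2}M\xrightarrow{e_2}B_1$ is a conflation and realizes precisely $y_1^{*}\delta_2$, rather than some uncontrolled class in $\mathbb{E}(B_1,A_2)$. I expect this extension-matching, not the construction of the maps, to absorb most of the effort. It is organized by combining the bifunctoriality relation $a_{*}c^{*}=c^{*}a_{*}$ with the compatibility equations returned by (ET4)$^{\rm op}$ (the analogues of $d^{*}\delta''=\delta$ and $f_{*}\delta''=e^{*}\delta'$) together with the relations $e_2m_1=x_1$, $y_1e_2=y_2e_1$ obtained above: restricting the extension realized on $N\cong A_1\oplus A_2$ to its two summands returns $\delta_1$ and $\delta_2$, and these two constraints, read through $y_1$ and $y_2$, pin the transverse column down to a conflation realizing $y_1^{*}\delta_2$ by uniqueness of the realized extension. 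This gives $\mathfrak{s}(y_1^{*}\delta_2)=[A_2\xrightarrow{m_2}M\xrightarrow{e_2}B_1]$, completing the diagram of (1); running the same argument in $\mathcal{C}^{\rm op}$ then yields (2).
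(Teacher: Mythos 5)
The paper does not prove this lemma: it is quoted verbatim from \cite[Proposition 3.15]{NP}, so there is no internal proof to compare against. Judged on its own terms, your proposal sets up the construction correctly but leaves the decisive step unproved. The reduction of (2) to (1) via $\mathcal{C}^{\rm op}$ is fine, and so is the first stage: realizing $y_2^{*}\delta_1$ by $A_1\xrightarrow{m_1}M\xrightarrow{e_1}B_2$, noting that $(1_{A_1},y_2)$ is a morphism of $\mathbb{E}$-extensions, and extracting $e_2$ with $e_2m_1=x_1$, $y_1e_2=y_2e_1$ from the realization axiom. The (ET4)$^{\rm op}$ step also works as far as it goes: the conflation $A_1\to N\to A_2$ realizes $(y_2x_2)^{*}\delta_1=0$, so it splits and you get $m_2$ with $e_1m_2=x_2$.

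The gap is the last paragraph. First, nothing you have written shows that $A_2\xrightarrow{m_2}M\xrightarrow{e_2}B_1$ is a conflation at all; to produce one you would have to apply (ET4) again to the composable inflations $A_2\hookrightarrow N\xrightarrow{n}M$, and then separately identify the resulting cone with $B_1$ \emph{and} the resulting deflation with the particular $e_2$ you already fixed. Second, the claim that ``restricting the extension realized on $N\cong A_1\oplus A_2$ to its two summands returns $\delta_1$ and $\delta_2$'' is only half supported by the (ET4)$^{\rm op}$ compatibilities: the analogue of $d^{*}\delta''=\delta$ does give $(\mathrm{pr}_{A_2})_{*}\theta=\delta_2$, but the other compatibility only gives $y_2^{*}\theta=(\iota_{A_1})_{*}y_2^{*}\delta_1$, i.e.\ it controls $(\mathrm{pr}_{A_1})_{*}\theta$ only after pullback along $y_2$. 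By the long exact sequence this determines $(\mathrm{pr}_{A_1})_{*}\theta$ only up to $\mathrm{im}(\delta_2^{\sharp})$, which is exactly the ambiguity in the choice of splitting of $A_1\to N\to A_2$ (and hence in $m_2$ itself); resolving it is essentially the content of the lemma, not a consequence of ``uniqueness of the realized extension.'' The standard way to close this, and the route taken in \cite{NP} (compare the ``moreover'' clause of Lemma \ref{lemb} in this paper), is to first establish that $M\xrightarrow{\tiny\begin{bmatrix}-e_1\\ e_2\end{bmatrix}}B_2\oplus B_1\xrightarrow{\tiny\begin{bmatrix}x_2... \end{bmatrix}}$ -- more precisely $M\to B_2\oplus B_1\xrightarrow{(y_2\ y_1)}C$ -- is a conflation with a known class, and then extract the column $A_2\to M\to B_1$ from it; your proof needs an argument of this kind to be complete.
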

Assume that $(\mathcal{C}, \mathbb{E}, \mathfrak{s})$ is an extriangulated category. By Yoneda's Lemma, any $\mathbb{E}$-extension $\delta\in \mathbb{E}(C, A)$ induces  natural transformations
\begin{center} $\delta_\sharp: \mathcal{C}(-, C)\Rightarrow \mathbb{E}(-, A)$ and $\delta^\sharp: \mathcal{C}(A, -)\Rightarrow \mathbb{E}(C, -)$.\end{center}
For any $X\in\mathcal{C}$, these $(\delta_\sharp)_X$ and $\delta^\sharp_X$ are given as follows:

(1) $(\delta_\sharp)_X: \mathcal{C}(X, C)\Rightarrow \mathbb{E}(X, A); ~f\mapsto f^*\delta.$

(2) $\delta^\sharp_X: \mathcal{C}(A, X)\Rightarrow \mathbb{E}(C, X); ~g\mapsto g_*\delta.$
\begin{lem} {\rm \cite[Corollary 3.12]{NP}}  Let $(\mathcal{C}, \mathbb{E}, \mathfrak{s})$ be an extriangulated category, and $$\xymatrix@C=2em{A\ar[r]^{x}&B\ar[r]^{y}&C\ar@{-->}[r]^{\delta}&}$$ an $\mathbb{E}$-triangle. Then we have the following long exact sequences:

$\xymatrix@C=1cm{\mathcal{C}(C, -)\ar[r]^{\mathcal{C}(y, -)}&\mathcal{C}(B, -)\ar[r]^{\mathcal{C}(x, -)}&\mathcal{C}(A, -)\ar[r]^{\delta^\sharp}&\mathbb{E}(C, -)\ar[r]^{\mathbb{E}(y, -)}&\mathbb{E}(B, -)\ar[r]^{\mathbb{E}(x, -)}&\mathbb{E}(A, -);}$

$\xymatrix@C=1cm{\mathcal{C}(-, A)\ar[r]^{\mathcal{C}(-, x)}&\mathcal{C}(-, B)\ar[r]^{\mathcal{C}(-, y)}&\mathcal{C}(-, C)\ar[r]^{\delta_\sharp}&\mathbb{E}(-, A)\ar[r]^{\mathbb{E}(-, x)}&\mathbb{E}(-, B)\ar[r]^{\mathbb{E}(-, y)}&\mathbb{E}(-, C).}$

\end{lem}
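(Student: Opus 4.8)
The plan is to prove exactness of the two displayed sequences by evaluating at an arbitrary object $X\in\mathcal{C}$, so that each becomes a sequence of abelian groups. Passing to the opposite extriangulated category $\mathcal{C}^{\rm op}$ (with $\mathbb{E}^{\rm op}(A,C):=\mathbb{E}(C,A)$ and the induced realization, which is again extriangulated) turns the $\mathbb{E}$-triangle $A\xrightarrow{x}B\xrightarrow{y}C\dashrightarrow$ into $C\to B\to A\dashrightarrow$ and interchanges $\delta^{\sharp}$ with $\delta_{\sharp}$, so the second sequence follows from the first; I therefore concentrate on
$$\mathcal{C}(C,X)\xrightarrow{\mathcal{C}(y,X)}\mathcal{C}(B,X)\xrightarrow{\mathcal{C}(x,X)}\mathcal{C}(A,X)\xrightarrow{(\delta^{\sharp})_X}\mathbb{E}(C,X)\xrightarrow{\mathbb{E}(y,X)}\mathbb{E}(B,X)\xrightarrow{\mathbb{E}(x,X)}\mathbb{E}(A,X).$$
The first task is to see that consecutive composites vanish. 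Using the defining formulas $(\delta^{\sharp})_X(h)=h_{*}\delta$ and $\mathbb{E}(y,X)(\eta)=y^{*}\eta$ together with the bifunctoriality of $\mathbb{E}$, all four composites reduce to the three elementary identities $y\circ x=0$, $x_{*}\delta=0$ and $y^{*}\delta=0$ (for instance $\mathbb{E}(x,X)\circ\mathbb{E}(y,X)=(yx)^{*}=0$, and $\mathbb{E}(y,X)\circ(\delta^{\sharp})_X$ sends $h$ to $h_{*}(y^{*}\delta)$). These identities are standard consequences of the realization axioms (ET2) and (ET3), and I would record them first as a warm-up.

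Exactness at the two $\mathrm{Hom}$-terms amounts to saying that $x$ is a weak kernel of $y$ and $y$ a weak cokernel of $x$. For exactness at $\mathcal{C}(B,X)$ I take $g\colon B\to X$ with $g\circ x=0$, form the commutative square comparing the $\mathbb{E}$-triangle of $\delta$ with the split $\mathbb{E}$-triangle $X\to X\oplus C\to C$ through the morphism $\bigl[\begin{smallmatrix}g\\ y\end{smallmatrix}\bigr]\colon B\to X\oplus C$, and apply (ET3) to extract a factorization of $g$ through $y$. For exactness at $\mathcal{C}(A,X)$, if $h\colon A\to X$ satisfies $h_{*}\delta=0$, then the $\mathbb{E}$-triangle realizing $h_{*}\delta$ splits; realizing the morphism $(h,\mathrm{id}_C)\colon\delta\to h_{*}\delta$ and composing its middle component with a retraction of the split inflation exhibits $h$ in the form $g\circ x$.

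The heart of the argument is exactness at the two $\mathbb{E}$-terms, where I would use Lemma \ref{lem1}. For exactness at $\mathbb{E}(C,X)$, given $\eta\in\mathbb{E}(C,X)$ with $y^{*}\eta=0$, I realize $\eta$ by an $\mathbb{E}$-triangle $X\to B_2\to C$ and apply Lemma \ref{lem1}(1) to the pair $(\delta,\eta)$, which share the third term $C$. The middle column of the resulting $3\times3$ diagram realizes $y^{*}\eta=0$ and hence is a split conflation; tracking the splitting idempotents through the diagram produces a morphism of $\mathbb{E}$-triangles $(h,b',\mathrm{id}_C)\colon\delta\to\eta$, whence $\eta=\mathrm{id}_C^{*}\eta=h_{*}\delta\in\im((\delta^{\sharp})_X)$. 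Exactness at $\mathbb{E}(B,X)$ is the mirror statement: from $\zeta\in\mathbb{E}(B,X)$ with $x^{*}\zeta=0$ one stacks the deflation of $\zeta$ onto $y$ via (ET4)$^{\rm op}$, uses $x^{*}\zeta=0$ to split the resulting auxiliary conflation, and reads off an $\eta\in\mathbb{E}(C,X)$ with $\zeta=y^{*}\eta$.

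The main obstacle I anticipate is exactly this last pair of steps. Two points need care. First, the translation between ``the extension class vanishes'' and ``the conflation splits'' must be applied to the auxiliary conflations furnished by Lemma \ref{lem1} (equivalently by (ET4) and (ET4)$^{\rm op}$), and relies on the additivity of $\mathfrak{s}$. Second, and more delicately, one must upgrade the merely commuting ladder extracted from the splitting to a genuine \emph{morphism of $\mathbb{E}$-extensions}, so that the equality $h_{*}\delta=\eta$ (respectively $y^{*}\eta=\zeta$) holds on the nose rather than only up to the ambiguity of a commutative diagram; this is precisely where the compatibility clauses (i)--(iii) of (ET4) and the explicit realization equalities in Lemma \ref{lem1} are indispensable. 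Careful bookkeeping of the idempotents splitting the middle conflation, as indicated above, is what forces the required identity of extension classes.
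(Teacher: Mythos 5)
First, a point of comparison: the paper does not prove this lemma at all --- it is quoted from \cite[Corollary 3.12]{NP}, as the citation in the statement indicates, so there is no in-paper argument to measure you against. The architecture of your proposal (evaluate at an object $X$, dualize for the second sequence, reduce the vanishing of the composites to $yx=0$, $x_*\delta=0$, $y^*\delta=0$, prove the weak (co)kernel properties at the two Hom-terms, and use Lemma \ref{lem1} resp.\ (ET4)$^{\rm op}$ at the two $\mathbb{E}$-terms) is the standard one and essentially reproduces the proof of \cite[Proposition 3.3 and Corollary 3.12]{NP}.

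Two of your steps fail as written, though both are repairable. (a) Exactness at $\mathcal{C}(B,X)$: applying (ET3) to the square built from $\bigl[\begin{smallmatrix}g\\ y\end{smallmatrix}\bigr]\colon B\to X\oplus C$ and the split conflation $X\to X\oplus C\to C$ only returns a morphism $c\colon C\to C$ with $cy=y$; it extracts no factorization of $g$ through $y$. One should instead compare $\delta$ with the split extension $0\in\mathbb{E}(X,0)$ realized by $0\to X\xrightarrow{1}X$: the square with vertical arrows $A\to 0$ and $g\colon B\to X$ commutes precisely because $gx=0$, and (ET3) then yields $c\colon C\to X$ with $cy=g$. (b) Exactness at $\mathbb{E}(C,X)$: you rightly flag that the ladder $(h,b',\mathrm{id}_C)$ obtained from splitting the middle column is a priori only a commutative diagram, but your proposed remedy (idempotent bookkeeping plus the compatibility clauses of (ET4) and the realization equalities of Lemma \ref{lem1}) does not force $h_*\delta=\eta$ for that particular $h$. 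The correct repair is to discard $h$ and apply (ET3)$^{\rm op}$ to the right-hand square $y_1b'=\mathrm{id}_C\cdot y$, which produces some $h'$ such that $(h',\mathrm{id}_C)\colon\delta\to\eta$ is a genuine morphism of $\mathbb{E}$-extensions, whence $h'_*\delta=\mathrm{id}_C^*\eta=\eta$; with this repair Lemma \ref{lem1} is not even needed for this step, since the already-established exactness at the Hom-terms, applied to the conflation realizing $\eta$, directly supplies $b'$ with $y_1b'=y$ and then $b'x$ factoring through the inflation of $\eta$. Your treatment of exactness at $\mathcal{C}(A,X)$ and the (ET4)$^{\rm op}$-plus-splitting argument at $\mathbb{E}(B,X)$ --- the part that genuinely goes beyond \cite[Proposition 3.3]{NP} --- are the right ideas.
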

\section{\bf Proper classes of $\mathbb{E}$-triangles}
  Throughout this section, $(\mathcal{C}, \mathbb{E}, \mathfrak{s})$ is  an extriangulated category.
A class of $\mathbb{E}$-triangles $\xi$ is {\it closed under base change} if for any $\mathbb{E}$-triangle $$\xymatrix@C=2em{A\ar[r]^x&B\ar[r]^y&C\ar@{-->}[r]^{\delta}&\in\xi}$$ and any morphism $c\colon C' \to C$, then any $\mathbb{E}$-triangle  $\xymatrix@C=2em{A\ar[r]^{x'}&B'\ar[r]^{y'}&C'\ar@{-->}[r]^{c^*\delta}&}$ belongs to $\xi$.

Dually, a class of  $\mathbb{E}$-triangles $\xi$ is {\it closed under cobase change} if for any $\mathbb{E}$-triangle $$\xymatrix@C=2em{A\ar[r]^x&B\ar[r]^y&C\ar@{-->}[r]^{\delta}&\in\xi}$$ and any morphism $a\colon A \to A'$, then any $\mathbb{E}$-triangle  $\xymatrix@C=2em{A'\ar[r]^{x'}&B'\ar[r]^{y'}&C\ar@{-->}[r]^{a_*\delta}&}$ belongs to $\xi$.

A class of $\mathbb{E}$-triangles $\xi$ is called {\it saturated} if in the situation of Lemma \ref{lem1}(1), whenever {
 $\xymatrix@C=2em{A_2\ar[r]^{x_2}&B_2\ar[r]^{y_2}&C\ar@{-->}[r]^{\delta_2 }&}$
 and $\xymatrix@C=2em{A_1\ar[r]^{m_1}&M\ar[r]^{e_1}&B_2\ar@{-->}[r]^{y_2^{\ast}\delta_1}&}$ }
 belong to $\xi$, then the  $\mathbb{E}$-triangle $\xymatrix@C=2em{A_1\ar[r]^{x_1}&B_1\ar[r]^{y_1}&C\ar@{-->}[r]^{\delta_1 }&}$  belongs to $\xi$.

An $\mathbb{E}$-triangle $\xymatrix@C=2em{A\ar[r]^x&B\ar[r]^y&C\ar@{-->}[r]^{\delta}&}$ is called {\it split} if $\delta=0$. It is easy to see that it is split if and only if $x$ is section or $y$ is retraction. The full subcategory  consisting of the split $\mathbb{E}$-triangles will be denoted by $\Delta_0$.

  \begin{definition} \label{def:proper class}{\rm  Let $\xi$ be a class of $\mathbb{E}$-triangles which is closed under isomorphisms. $\xi$ is called a {\it proper class} of $\mathbb{E}$-triangles if the following conditions hold:

  (1) $\xi$ is closed under finite coproducts and $\Delta_0\subseteq \xi$.

  (2) $\xi$ is closed under base change and cobase change.

  (3) $\xi$ is saturated.}

  \end{definition}

%
%
The following is the main result of this section.
\begin{thm}\label{thma} Let $\xi$ be a class of $\mathbb{E}$-triangles which is closed under isomorphisms.
Set $\mathbb{E}_\xi:=\mathbb{E}|_\xi$, that is, $$\mathbb{E}_\xi(C, A)=\{\delta\in\mathbb{E}(C, A)~|~\delta~ \textrm{is realized as an $\mathbb{E}$-triangle}\xymatrix{A\ar[r]^x&B\ar[r]^y&C\ar@{-->}[r]^{\delta}&}~\textrm{in}~\xi\}$$ for any $A, C\in\mathcal{C}$, and $\mathfrak{s}_\xi:=\mathfrak{s}|_{\mathbb{E}_\xi}$. Then $\xi$ is a  proper class  of $\mathbb{E}$-triangles if and only if $(\mathcal{C}, \mathbb{E}_\xi, \mathfrak{s}_\xi)$ is an extriangulated category.
\end{thm}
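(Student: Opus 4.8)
The plan is to read the statement as follows: for a fixed isomorphism-closed class $\xi$, the three defining conditions of a proper class are exactly what is needed to upgrade the restricted data $(\mathbb{E}_\xi,\mathfrak{s}_\xi)$ to an external triangulation, so I would verify the axioms (ET1)--(ET4)$^{\mathrm{op}}$ one at a time and record at each step precisely which closure property of $\xi$ is used; it is this bookkeeping that makes the equivalence an ``if and only if'' rather than a one-way implication. Throughout I keep the notation of Lemma \ref{lem1} and of (ET4).

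First I would dispose of (ET1)--(ET3)$^{\mathrm{op}}$, which match the conditions (1) and (2). For (ET1) the point is that $\mathbb{E}_\xi$ is an additive subbifunctor of $\mathbb{E}$. That each $\mathbb{E}_\xi(C,A)$ is a subgroup is where (1) and (2) enter: $0\in\mathbb{E}_\xi(C,A)$ is equivalent to $\Delta_0\subseteq\xi$; closure under negatives is cobase change along $-1_A$; and closure under the Baer sum holds because $\delta+\delta'$ is obtained from $\delta\oplus\delta'$ by a base change along the diagonal followed by a cobase change along the codiagonal, using in turn closure under finite coproducts, base change and cobase change. Functoriality of $\mathbb{E}_\xi$ (that $a_*$ and $c^*$ restrict) is literally closure under cobase and base change, and additivity of the bifunctor follows from coproduct closure. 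Then (ET2) is immediate: $\mathfrak{s}_\xi=\mathfrak{s}|_{\mathbb{E}_\xi}$ is a realization because $(\star)$ only asserts the existence of a morphism $b$ in $\mathcal{C}$, which $\mathfrak{s}$ already supplies, and its additivity reduces again to $\Delta_0\subseteq\xi$ and coproduct closure; and (ET3), (ET3)$^{\mathrm{op}}$ are inherited verbatim from $(\mathcal{C},\mathbb{E},\mathfrak{s})$, since they only produce morphisms of $\mathbb{E}$-extensions that already exist. Reading these equivalences backwards gives the corresponding half of the converse: an additive realization of an additive subbifunctor forces $\xi$ to be closed under isomorphisms, to contain $\Delta_0$, and to be closed under finite coproducts, base change and cobase change.

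The heart of the matter is the equivalence of (ET4)--(ET4)$^{\mathrm{op}}$ with saturation. For the forward half of (ET4) I would apply the ambient (ET4) to $\delta\in\mathbb{E}_\xi(D,A)$ and $\delta'\in\mathbb{E}_\xi(F,B)$, obtaining $E$, $\delta''$, and the compatibilities $d^*\delta''=\delta$, $f_*\delta''=e^*\delta'$ together with the fact that $D\to E\to F$ realizes $f'_*\delta'$. The conflation $D\to E\to F$ lies in $\xi$ at once, $f'_*\delta'$ being a cobase change of $\delta'$. The real work is $\delta''\in\xi$. Here I would first note $f_*\delta''=e^*\delta'\in\xi$ (a base change of $\delta'$), then apply Lemma \ref{lem1}(2) to the pair $\delta'',\delta$ (which share the first object $A$), producing a homotopy pushout $M^\ast$ with deflations $\epsilon_1\colon M^\ast\to E$ and $\epsilon_2\colon M^\ast\to D$ realizing $f_*\delta''$ and $h_*\delta$. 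The key computational lemma is the pushout identity $\epsilon_1^\ast\delta''=-\,\epsilon_2^\ast\delta$, proved using only the (ET4)-compatibilities, additivity of $\mathbb{E}$, and the fact that pulling an extension back along its own deflation splits it (so the cokernel $G$ of $A\to M^\ast$ is $E\oplus D$ and the total extension $A\to M^\ast\to G$ restricts to $\delta''$ and to $\delta$). Since $\epsilon_2^\ast\delta$ is a base change of $\delta\in\xi$, the identity gives $\epsilon_1^\ast\delta''\in\xi$; feeding $\delta''$ and $f_*\delta''$ (which end at the common object $E$) into Lemma \ref{lem1}(1), the triangle realizing $\epsilon_1^\ast\delta''$ is the middle row, so saturation with $\delta_2:=f_*\delta''$ and $y_2^\ast\delta_1:=\epsilon_1^\ast\delta''$ yields $\delta''\in\xi$. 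The axiom (ET4)$^{\mathrm{op}}$ is the dual argument.

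For the converse I would start from the data of Lemma \ref{lem1}(1) with $\delta_2,y_2^\ast\delta_1\in\xi$ and prove saturation. The pullback form of the identity, $m_{1*}\delta_1=-\,m_{2*}\delta_2$, together with $\delta_2\in\xi$, shows $m_{1*}\delta_1\in\mathbb{E}_\xi$; then, using that $(\mathcal{C},\mathbb{E}_\xi,\mathfrak{s}_\xi)$ is now extriangulated, the long exact sequence of the $\mathbb{E}_\xi$-triangle $A_1\to M\to B_2$ and the relation $(e_1)_*(m_{1*}\delta_1)=0$ let me write $\delta_1$ as the sum of an element of $\mathbb{E}_\xi(C,A_1)$ and a base change of $y_2^\ast\delta_1\in\mathbb{E}_\xi$, whence $\delta_1\in\xi$. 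I expect the main obstacle to be exactly this equivalence (ET4)$\Leftrightarrow$ saturation, and within it the identity $\epsilon_1^\ast\delta''=-\epsilon_2^\ast\delta$: saturation as defined is a reflection property for \emph{pullbacks} (Lemma \ref{lem1}(1)), while the extension $\delta''$ created by (ET4) is naturally governed by \emph{pushout}/cobase data, so genuine input is required to bridge the two, and the identity is what converts the cobase information $f_*\delta''\in\xi$ into the base-change hypothesis demanded by saturation. One must also check that no instance of Condition (WIC) is smuggled in, since Theorem \ref{thma} does not assume it.
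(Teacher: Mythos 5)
Your proposal is correct in substance, and for the forward implication it lands on essentially the same mechanism as the paper, though packaged differently. The paper factors the whole argument through Proposition~\ref{thm}, which shows that saturation (stated for the pullback diagram of Lemma~\ref{lem1}(1)) is equivalent to the analogous reflection property for the pushout diagram of Lemma~\ref{lem1}(2); the verification of (ET4) for $(\mathcal{C},\mathbb{E}_\xi,\mathfrak{s}_\xi)$ is then exactly the ``only if'' half of that proposition, carried out with the mapping-cone conflation $B_2\to M\oplus C_2\to W$ supplied by Lemma~\ref{lemc}(iv) and the direct-sum padding of Lemma~\ref{lema}. Your homotopy pushout $M^\ast$ is, once one observes that $h_\ast\delta=0$ (pushing a class forward along its own inflation kills it), isomorphic to that same direct sum, and your identity $\epsilon_1^\ast\delta''=-\epsilon_2^\ast\delta$ unwinds to the paper's componentwise computation of the pullback of $\rho$ along the deflation of the mapping-cone conflation; so the two arguments coincide up to presentation, and the octahedron-matching your sketch requires is precisely the content of Lemma~\ref{lemc}. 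Where you genuinely diverge is the converse. The paper deduces saturation by running the restricted (ET4) together with the ambient (ET3) to establish the pushout-form property and then invoking the ``if'' half of Proposition~\ref{thm}, whose proof it omits as ``similar''; you instead stay in the pullback form, prove $m_{1\ast}\delta_1=-m_{2\ast}\delta_2\in\mathbb{E}_\xi(C,M)$, and then combine the restricted and ambient long exact sequences for $A_1\to M\to B_2$ with the subgroup structure of $\mathbb{E}_\xi(C,A_1)$ to exhibit $\delta_1$ as a sum of two elements of $\xi$. That is a genuinely different and arguably more self-contained route---it fills in a step the paper leaves implicit---at the cost of the dual Mayer--Vietoris identity, which you assert but do not prove; it does hold, e.g.\ by applying (ET4)$^{\rm op}$ to the composite deflation $M\to B_1\to C$, whose cocone splits as $A_2\oplus A_1$ because $x_1^\ast y_1^\ast\delta_2=0$. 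Your explicit subgroup verification for (ET1) (Baer sum as a base change followed by a cobase change of $\delta\oplus\delta'$) and your observation that Condition (WIC) is nowhere needed are both correct and correspond to steps the paper dismisses as ``easy to check''.
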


\begin{rem} \label{rem:3.4} \emph{(1)}  Assume that $(\mathcal{C}, \mathbb{E}, \mathfrak{s})$ is a compactly generated triangulated category and $\xi$ is the class of pure triangles
(which is induced by the compact objects).  It follows from Theorem \ref{thma} that $(\mathcal{C}, \mathbb{E}_\xi, \mathfrak{s}_\xi)$ is an extriangulated category. Since the inflation in $(\mathcal{C}, \mathbb{E}_\xi, \mathfrak{s}_\xi)$ is not a monomorphism in the
categorical sense, hence $(\mathcal{C}, \mathbb{E}_\xi, \mathfrak{s}_\xi)$  is not an exact category. On the other hand, $(\mathcal{C}, \mathbb{E}_\xi, \mathfrak{s}_\xi)$ is not a triangulated category because not every morphism $f\colon X\rightarrow  Y$ in $\mathcal{C}$ can be embeded into the pure triangle in
$\xi$. { This example comes from \cite{Krause}.}

\emph{(2)} Assume that $(\mathcal{C}, \mathbb{E}, \mathfrak{s})$ is an exact category and $\xi$ is a class of exact sequences which is closed under isomorphisms. One can check that $\xi$ is a proper class if and only if $(\mathcal{C}, \mathbb{E}_\xi, \mathfrak{s}_\xi)$ is an exact category.

\emph{(3)} If $\mathcal{C}$ is a triangulated category and the class $\xi$ of triangles is closed under
isomorphisms and suspension \emph{(see \cite[Section 2.2]{Bel1})}, then $\xi$ is a proper class if and only if $(\mathcal{C}, \mathbb{E}_\xi, \mathfrak{s}_\xi)$ is an extriangulated category. However $(\mathcal{C}, \mathbb{E}_\xi, \mathfrak{s}_\xi)$ is not a triangulated category by \emph{(1)} in general.
\end{rem}

\begin{definition} {\rm Let $\xi$ be a proper class of $\mathbb{E}$-triangles.
A morphism $x$ is called {\it $\xi$-inflation} if there exists an  $\mathbb{E}$-triangle $\xymatrix{A\ar[r]^x&B\ar[r]^{y}&C\ar@{-->}[r]^{\delta}&}$  in $\xi$.
A morphism $y$ is called {\it $\xi$-deflation} if there exists an  $\mathbb{E}$-triangle $\xymatrix{A\ar[r]^x&B\ar[r]^{y}&C\ar@{-->}[r]^{\delta}&}$  in $\xi$.}
\end{definition}

By Theorem \ref{thma} and  \cite[Remark 2.16]{NP}, we have the following corollary.
\begin{cor}\label{cor1}  Let $\xi$ be a proper class of $\mathbb{E}$-triangles.
 Then the class of $\xi$-inflations {\rm(}respectively $\xi$-deflations{\rm)} is closed under compositions.
\end{cor}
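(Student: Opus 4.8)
The plan is to deduce the corollary from the extriangulated structure produced in Theorem \ref{thma}, so that both closure statements become instances of the corresponding general fact for extriangulated categories.

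First I would record the key identification. Because an $\mathbb{E}_\xi$-extension is, by the definition of $\mathbb{E}_\xi$ in Theorem \ref{thma}, precisely an $\mathbb{E}$-extension that is realized by some $\mathbb{E}$-triangle lying in $\xi$, a morphism $x$ is a $\xi$-inflation exactly when it is an inflation of the triplet $(\mathcal{C}, \mathbb{E}_\xi, \mathfrak{s}_\xi)$, and dually $y$ is a $\xi$-deflation exactly when it is a deflation of $(\mathcal{C}, \mathbb{E}_\xi, \mathfrak{s}_\xi)$. Since $\xi$ is assumed to be a proper class, Theorem \ref{thma} guarantees that $(\mathcal{C}, \mathbb{E}_\xi, \mathfrak{s}_\xi)$ is genuinely an extriangulated category, so all of its axioms are at our disposal, in particular (ET4) and (ET4)$^{\rm op}$.

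Next I would invoke the general closure statement \cite[Remark 2.16]{NP}, whose short argument I would recall for completeness. Given inflations $f\colon A\to B$ and $g\colon B\to C$ of an extriangulated category, choose $\mathbb{E}$-triangles $A\xrightarrow{f} B\xrightarrow{f'} D$ and $B\xrightarrow{g} C\xrightarrow{g'} F$ realizing extensions $\delta$ and $\delta'$; axiom (ET4) then produces an object $E$ together with an $\mathbb{E}$-triangle $A\xrightarrow{h} C\xrightarrow{h'} E$ whose left-hand commutative square reads $h\circ\mathrm{id}_A=g\circ f$, forcing $h=gf$, so that $gf$ is again an inflation. The assertion that deflations are closed under composition is the dual statement, obtained in the same way from (ET4)$^{\rm op}$.

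Finally I would apply the previous step to the extriangulated category $(\mathcal{C}, \mathbb{E}_\xi, \mathfrak{s}_\xi)$: two composable $\xi$-inflations are inflations of $(\mathcal{C}, \mathbb{E}_\xi, \mathfrak{s}_\xi)$, hence their composition is an inflation of $(\mathcal{C}, \mathbb{E}_\xi, \mathfrak{s}_\xi)$, that is, a $\xi$-inflation; the same reasoning with (ET4)$^{\rm op}$ handles $\xi$-deflations. I do not expect any genuine obstacle, since the substantive content was already carried by Theorem \ref{thma}. The only point requiring care is to make sure that when (ET4) is applied inside $(\mathcal{C}, \mathbb{E}_\xi, \mathfrak{s}_\xi)$ the two input $\mathbb{E}$-triangles are indeed taken in $\xi$, which is exactly what being a $\xi$-inflation provides, and that the resulting composition triangle automatically belongs to $\xi$ because it realizes an $\mathbb{E}_\xi$-extension.
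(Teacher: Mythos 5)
Your proposal is correct and follows exactly the paper's route: the paper derives this corollary in one line from Theorem \ref{thma} together with \cite[Remark 2.16]{NP}, which is precisely your identification of $\xi$-inflations (resp.\ $\xi$-deflations) with the inflations (resp.\ deflations) of the extriangulated category $(\mathcal{C}, \mathbb{E}_\xi, \mathfrak{s}_\xi)$ followed by the general closure-under-composition fact via (ET4) and (ET4)$^{\rm op}$. The only difference is that you spell out the short (ET4) argument that the paper leaves to the cited remark.
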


To prove Theorem \ref{thma}, we need the following lemmas.
\begin{lem}\label{lemb}
\emph{(1)} If $\xymatrix{A\ar[r]^{x}&B\ar[r]^{y}&C\ar@{-->}[r]^{\delta}&}$ and $\xymatrix{D\ar[r]^{d}&E\ar[r]^{e}&C\ar@{-->}[r]^{\rho}&}$
 are $\mathbb{E}$-triangles, and $g: B\rightarrow E$ is a morphism satisfying $eg=y$, then there exists a morphism $f: A\rightarrow D$ which gives a morphism of $\mathbb{E}$-triangles
$$\xymatrix@C=2em{A\ar[r]^x\ar@{-->}[d]^f&B\ar[r]^y\ar[d]^g&C\ar@{-->}[r]^{\delta}\ar@{=}[d]&\\
  D\ar[r]^d&E\ar[r]^e&C\ar@{-->}[r]^{\rho}&.
  }$$
  Moreover, $\xymatrix@C=1,2cm{A\ar[r]^{\tiny\begin{bmatrix}-f\\x\end{bmatrix}\ \ \ }&D\oplus B\ar[r]^{\tiny\ \ \begin{bmatrix}d&g\end{bmatrix}}&E\ar@{-->}[r]^{e^*\delta}&}$ is an $\mathbb{E}$-triangle.

\emph{(2)} Dual of \emph{(1)}.
 \end{lem}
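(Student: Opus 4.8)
The plan is to prove the two assertions by different tools: the morphism $f$ comes from the dual of (ET3), while the ``moreover'' $\mathbb{E}$-triangle comes from the homotopy pullback of Lemma \ref{lem1}(1) after splitting off a trivial complement.

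First I would produce $f$. Since $eg=y$, the square with horizontal maps $y,e$ and vertical maps $g,\mathrm{id}_C$ commutes, so applying (ET3)$^{\mathrm{op}}$ to the $\mathbb{E}$-triangles realizing $\delta$ and $\rho$ yields a morphism $(f,1_C)\colon\delta\to\rho$ of $\mathbb{E}$-extensions, that is $f_*\delta=\rho$, together with $df=gx$. Because $df=gx$ and $eg=y$ are precisely the two commutativities required, the triplet $(f,g,1_C)$ realizes $(f,1_C)$ and hence is the asserted morphism of $\mathbb{E}$-triangles.

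For the second assertion I would apply Lemma \ref{lem1}(1) to the two $\mathbb{E}$-triangles $A\xrightarrow{x}B\xrightarrow{y}C$ (class $\delta$) and $D\xrightarrow{d}E\xrightarrow{e}C$ (class $\rho$), which share the third term $C$. This produces an object $M$ and $\mathbb{E}$-triangles $A\xrightarrow{m_1}M\xrightarrow{e_1}E$ realizing $e^*\delta$ and $D\xrightarrow{m_2}M\xrightarrow{e_2}B$ realizing $y^*\rho$, subject to $e_2m_1=x$, $e_1m_2=d$, and $ee_1=ye_2$. The pivotal point is that $y^*\rho=0$: from $y=eg$ we see that $y$ lies in the image of $\mathcal{C}(B,e)$, hence in the kernel of $(\rho_\sharp)_B$ by the long exact sequence attached to $\rho$, so $y^*\rho=(\rho_\sharp)_B(y)=0$. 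Therefore the second $\mathbb{E}$-triangle splits, giving $M\cong D\oplus B$ with $m_2,e_2$ identified with the canonical inclusion and projection. I would then pick a section $s$ of $e_2$: since $ee_1=ye_2$ forces $e(e_1s_0)=y$ for any section $s_0$, and $eg=y$, the two maps $e_1s_0$ and $g$ differ by a map factoring through $d$ (exactness of $\mathcal{C}(B,D)\to\mathcal{C}(B,E)\to\mathcal{C}(B,C)$), so adjusting $s_0$ by $m_2$ times that map yields a section $s$ with $e_1s=g$. With the complementary retraction $p$, the relations $e_2m_1=x$ and $e_1m_2=d$ rewrite $A\xrightarrow{m_1}M\xrightarrow{e_1}E$ as $A\xrightarrow{\binom{-f}{x}}D\oplus B\xrightarrow{(d\ \,g)}E$, where I set $f:=-p\,m_1$; this realizes $e^*\delta$, and part (2) follows by the dual argument.

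The main obstacle is reconciling this $f$ with the one from the first step, i.e.\ checking that the component $p\,m_1$ actually satisfies $(p\,m_1)_*\delta=-\rho$ so that a \emph{single} $f$ serves both statements. This is not formal from the commutative diagram alone: since inflations in $\mathcal{C}$ need not be monomorphisms, the identity $e_1m_1=0$ only gives $d\,(p m_1+f)=0$, which does not force $p m_1=-f$. The cleanest organization is thus to \emph{define} $f$ from the split homotopy pullback, so that the triangle $A\xrightarrow{\binom{-f}{x}}D\oplus B\xrightarrow{(d\ \,g)}E$ realizing $e^*\delta$ is immediate, and then to extract $f_*\delta=\rho$ (together with $df=gx$, which is automatic from $e_1m_1=0$) from the ET4-compatibilities underlying the construction of $M$ in Lemma \ref{lem1}(1), which are exactly what ties the extension realized by $M$ to $\rho$. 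Carrying out that extension-level verification, rather than the purely diagrammatic matching of the maps, is where the real work lies.
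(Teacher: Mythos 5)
Your proposal follows essentially the same route as the paper's proof. Both arguments obtain the ``moreover'' $\mathbb{E}$-triangle from the homotopy pullback of Lemma \ref{lem1}(1) applied to the two $\mathbb{E}$-triangles over $C$, observe that $y^{*}\rho=(eg)^{*}\rho=g^{*}(e^{*}\rho)=0$ so that one leg of the pullback splits and $M\cong D\oplus B$, correct the splitting by a morphism $b\colon B\to D$ with $db=g'-g$ (coming from the exactness of $\mathcal{C}(B,D)\to\mathcal{C}(B,E)\to\mathcal{C}(B,C)$) so that the second component of the deflation becomes the given $g$, and then read off $f$ as minus the first component of the resulting inflation; your remark that $df=gx$ is then forced by $e_{1}m_{1}=0$ matches the paper. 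Your alternative derivation of assertion (1) alone from (ET3)$^{\mathrm{op}}$ is correct, but, as you yourself note, it yields an $f$ that need not serve the ``moreover'' claim, which is why the paper (and your reorganized plan) defines $f$ once from the pullback. The single step you describe but do not carry out, namely $f_{*}\delta=\rho$, is exactly where the paper invokes the extension-level compatibility of the homotopy pullback, $\begin{bmatrix}h\\x\end{bmatrix}_{*}\delta+\begin{bmatrix}1\\0\end{bmatrix}_{*}\rho=0$, which is part of the full statement of \cite[Proposition 3.15]{NP} but is omitted from the paper's own Lemma \ref{lem1}, so it must be quoted from the source; once it is in hand, $x_{*}\delta=0$ gives $f_{*}\delta=-h_{*}\delta=\rho$ in two lines, and your plan closes with no further ideas needed.
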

 \begin{proof}
The proof is  model on that of \cite[Proposition 1.20]{LN}. For the convenience of readers, we give the proof here.

 By Lemma \ref{lem1}(1), we get the following commutative diagram made of $\mathbb{E}$-triangles
  $$\xymatrix{\small
    & A\ar[d]_{\tiny\begin{bmatrix}^\exists h\\x\end{bmatrix}} \ar@{=}[r] &A \ar[d]^{x} &\\
  D\ar@{=}[d] \ar[r]^{\tiny\begin{bmatrix}1\\0\end{bmatrix}\ \ \ } &D\oplus B \ar[d]_{\tiny\begin{bmatrix}d&^\exists g'\end{bmatrix}}
   \ar[r]^{\tiny\begin{bmatrix}0&1\end{bmatrix}} &B\ar[d]^{y}\ar@{-->}[r]^{0} &\\
  D\ar[r]^{d}&E\ar[r]^e\ar@{-->}[d]^{e^*\delta}&C\ar@{-->}[r]^{\rho}\ar@{-->}[d]^{\delta}&\\
  &&&   }
  $$
satisfying ${\tiny\begin{bmatrix} h\\x\end{bmatrix}}_*\delta+{\tiny\begin{bmatrix}1\\0\end{bmatrix}}_*\rho=0$, in which we may assume that the middle row is of the form of $\xymatrix@C=1.2cm{ D\ar[r]^{\tiny\begin{bmatrix}1\\0\end{bmatrix}\ \ \ } &D\oplus B \ar[r]^{\quad\tiny\begin{bmatrix}0&1\end{bmatrix}} &B\ar@{-->}[r]^{0} &}$ since  { $y^*\rho=(eg)^*\rho=g^*(e^*\rho)=0$ follows from the fact that $e^*\rho=0$ by \cite[Lemma 3.2]{NP}}. In particular,  $\xymatrix@C=1.2cm{A\ar[r]^{\tiny\begin{bmatrix}h\\x\end{bmatrix}\ \ \ \ \ } &D\oplus B
   \ar[r]^{\ \ \tiny\begin{bmatrix}d&g'\end{bmatrix}} &E\ar@{-->}[r]^{e^*\delta} &}$ is an $\mathbb{E}$-triangle. Note that $eg'=y=eg$,  thus by the exactness of $\xymatrix{\mathcal{C}(B, D)\ar[r]^{\mathcal{C}(B, d)}&\mathcal{C}(B, E)\ar[r]^{\mathcal{C}(B, e)}&\mathcal{C}(B, C),}$ there is $b\in\mathcal{C}(B, D)$ satisfying $db=g'-g$ as $e(g'-g)=0$. For the isomorphism $$\xymatrix{i={\tiny\begin{bmatrix}1&b\\0&1\end{bmatrix}}\colon D\oplus B\xrightarrow{~ \cong~}D\oplus B,}$$ the following diagram is commutative.
   $$\xymatrix{&D\oplus B\ar[dd]^{\cong}_i\ar[dr]^{\tiny\begin{bmatrix}d&g'\end{bmatrix}}&\\
   A\ar[ur]^{\tiny\begin{bmatrix}h\\x\end{bmatrix}}\ar[dr]_{\tiny\begin{bmatrix}h+bx\\x\end{bmatrix}}&&E\\
   &D\oplus B\ar[ur]_{\tiny\begin{bmatrix}d&g\end{bmatrix}}&
   }$$
   Set $f=-(h+bx)$, then $\xymatrix@C=2em{A\ar[r]^{\tiny\begin{bmatrix}-f\\x\end{bmatrix}\ \ \ }&D\oplus B\ar[r]^{\tiny\ \ \begin{bmatrix}d&g\end{bmatrix}}&E\ar@{-->}[r]^{e^*\delta}&}$ is an $\mathbb{E}$-triangle,  $df=-d(h+bx)=-dh-g'x+gx=gx$ and $f_*\delta=-(h+bx)_*\delta=-h_*\delta=-{\tiny\begin{bmatrix}1&0\end{bmatrix}}_*{\tiny\begin{bmatrix}1\\0\end{bmatrix}}_*\rho=\rho$. Hence $(f, g, 1)$ is a morphism of $\mathbb{E}$-triangles.
 \end{proof}

\begin{lem}\label{lema}
\emph{(1)} If $\delta\in\mathbb{E}(C,A)$ and $\xymatrix@C=1.2cm{D\oplus A\ar[r]^{\tiny\begin{bmatrix}1&0\\0&f\end{bmatrix}}&D\oplus B\ar[r]^{\quad\tiny\begin{bmatrix}0&g\end{bmatrix}}&C\ar@{-->}[r]^{{\tiny\begin{bmatrix}0\\1\end{bmatrix}}_*\delta}&}$ is an $\mathbb{E}$-triangle for some $D\in\mathcal{C}$, then $\xymatrix{A\ar[r]^{f}&B\ar[r]^{g}&C\ar@{-->}[r]^{\delta}&}$ is an $\mathbb{E}$-triangle.  Moreover, if

$\xymatrix@C=1.2cm{D\oplus A\ar[r]^{\tiny\begin{bmatrix}1&0\\m&f\end{bmatrix}}&D\oplus B\ar[r]^{\quad\tiny\begin{bmatrix}n&g\end{bmatrix}}&C\ar@{-->}[r]^{{\tiny\begin{bmatrix}0\\1\end{bmatrix}}_*\delta}&}$ is an $\mathbb{E}$-triangle, then $\xymatrix@C=0.8cm{A\ar[r]^{f}&B\ar[r]^{g}&C\ar@{-->}[r]^{\delta}&}$ is an $\mathbb{E}$-triangle.

\emph{(2)} If $\xymatrix{A\ar[r]^{x}&B\ar[r]^{y}&C\ar@{-->}[r]^{\delta}&}$ is an $\mathbb{E}$-triangle, then $\xymatrix{A\ar[r]^{\tiny\begin{bmatrix}0\\x\end{bmatrix}\qquad }&D\oplus B\ar[r]^{\tiny\begin{bmatrix}1&0\\0&y\end{bmatrix}}&
D\oplus C\ar@{-->}[r]^{\ \ \ \ {\tiny\begin{bmatrix}0&1\end{bmatrix}}^*\delta}&}$ is an $\mathbb{E}$-triangle.
\end{lem}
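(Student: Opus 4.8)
The plan is to derive both parts from the additivity of the realization $\mathfrak{s}$ (axiom (ET2)) together with the fact that, for each $\mathbb{E}$-extension, $\mathfrak{s}$ returns a \emph{single} equivalence class of sequences. The key preliminary observation is that a finite direct sum of $\mathbb{E}$-triangles is again an $\mathbb{E}$-triangle: if $A\xrightarrow{x}B\xrightarrow{y}C$ and $A'\xrightarrow{x'}B'\xrightarrow{y'}C'$ realize $\delta$ and $\delta'$, then by (ET2) and \cite[Definition 2.8]{NP} we have $\mathfrak{s}(\delta\oplus\delta')=\mathfrak{s}(\delta)\oplus\mathfrak{s}(\delta')=[A\oplus A'\xrightarrow{x\oplus x'}B\oplus B'\xrightarrow{y\oplus y'}C\oplus C']$. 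Thus the block-shaped sequences in the statement arise by adding a split $\mathbb{E}$-triangle on $D$ to a given one, provided the resulting extension is identified. This already proves \emph{(2)}: writing $\iota_{D},\iota_{C}$ for the canonical inclusions and $p_C=\begin{bmatrix}0&1\end{bmatrix}\colon D\oplus C\to C$, I add the split $\mathbb{E}$-triangle $0\to D\xrightarrow{1_D}D$ on the left of $A\xrightarrow{x}B\xrightarrow{y}C$ to obtain the $\mathbb{E}$-triangle $A\xrightarrow{\begin{bmatrix}0\\x\end{bmatrix}}D\oplus B\xrightarrow{\begin{bmatrix}1&0\\0&y\end{bmatrix}}D\oplus C$ realizing $0\oplus\delta$, and then check $0\oplus\delta=p_C^{*}\delta$ using $\mathbb{E}(D\oplus C,A)\cong\mathbb{E}(D,A)\oplus\mathbb{E}(C,A)$: since $p_C\iota_C=1_C$ and $p_C\iota_D=0$, one gets $\iota_C^{*}(p_C^{*}\delta)=\delta$ and $\iota_D^{*}(p_C^{*}\delta)=0$, which are exactly the defining equations of $0\oplus\delta$.

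For the first assertion of \emph{(1)}, I use Definition \ref{def1} to choose an $\mathbb{E}$-triangle $A\xrightarrow{x_0}B_0\xrightarrow{y_0}C$ realizing $\delta$. Adding the split $\mathbb{E}$-triangle $D\xrightarrow{1_D}D\to 0$ on the left and identifying the extension as above (now $0\oplus\delta=\begin{bmatrix}0\\1\end{bmatrix}_{*}\delta$) produces a second $\mathbb{E}$-triangle $D\oplus A\xrightarrow{\begin{bmatrix}1&0\\0&x_0\end{bmatrix}}D\oplus B_0\xrightarrow{\begin{bmatrix}0&y_0\end{bmatrix}}C$ realizing the same $\begin{bmatrix}0\\1\end{bmatrix}_{*}\delta$ as the hypothesized sequence. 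Because $\mathfrak{s}(\begin{bmatrix}0\\1\end{bmatrix}_{*}\delta)$ is a single equivalence class, the two sequences are equivalent, yielding an isomorphism $\beta\colon D\oplus B\to D\oplus B_0$ with $\beta\begin{bmatrix}1&0\\0&f\end{bmatrix}=\begin{bmatrix}1&0\\0&x_0\end{bmatrix}$ and $\begin{bmatrix}0&y_0\end{bmatrix}\beta=\begin{bmatrix}0&g\end{bmatrix}$. Writing $\beta$ in $2\times 2$ block form, the first equation forces its $(1,1)$-block to be $1_D$ and its $(2,1)$-block to be $0$, so $\beta$ is upper triangular with invertible diagonal; hence its $(2,2)$-block $b\colon B\to B_0$ is an isomorphism, and the two equations then read $bf=x_0$ and $y_0b=g$. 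Therefore $[A\xrightarrow{f}B\xrightarrow{g}C]=[A\xrightarrow{x_0}B_0\xrightarrow{y_0}C]=\mathfrak{s}(\delta)$, i.e. $A\xrightarrow{f}B\xrightarrow{g}C$ realizes $\delta$.

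For the ``moreover'' statement I reduce to the case just settled. Left-composing the inflation $\begin{bmatrix}1&0\\m&f\end{bmatrix}$ with the automorphism $\psi=\begin{bmatrix}1&0\\-m&1\end{bmatrix}$ of $D\oplus B$, and right-composing the deflation with $\psi^{-1}=\begin{bmatrix}1&0\\m&1\end{bmatrix}$, produces an isomorphic, hence equivalent, $\mathbb{E}$-triangle realizing the same extension $\begin{bmatrix}0\\1\end{bmatrix}_{*}\delta$, whose inflation is now $\begin{bmatrix}1&0\\0&f\end{bmatrix}$ and whose deflation is $\begin{bmatrix}n+gm&g\end{bmatrix}$. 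Since the composite of a deflation after an inflation vanishes in any $\mathbb{E}$-triangle (exactness of $\mathcal{C}(-,A)\to\mathcal{C}(-,B)\to\mathcal{C}(-,C)$, \cite[Corollary 3.12]{NP}), we obtain $\begin{bmatrix}n+gm&gf\end{bmatrix}=0$, whence $n+gm=0$ and the deflation equals $\begin{bmatrix}0&g\end{bmatrix}$. This is precisely the hypothesis of the first assertion, which delivers the $\mathbb{E}$-triangle $A\xrightarrow{f}B\xrightarrow{g}C$ realizing $\delta$.

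The only genuine difficulty is the last step of the first assertion: equivalence of realizations merely supplies an isomorphism of the total middle terms $D\oplus B\cong D\oplus B_0$, and one must exploit the triangular shape that the hypotheses force on $\beta$ in order to split off the common $D$-summand and recover an honest isomorphism $B\cong B_0$ compatible with $f,g,x_0,y_0$. Everything else — the extension identifications $0\oplus\delta=p_C^{*}\delta$ and $0\oplus\delta=\begin{bmatrix}0\\1\end{bmatrix}_{*}\delta$, and the vanishing $n+gm=0$ — is routine block bookkeeping.
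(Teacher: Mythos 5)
Your proof is correct, and for the main assertion of (1) it takes a genuinely different route from the paper's. The paper fixes a realization $A\xrightarrow{x}B'\xrightarrow{y}C$ of $\delta$ and applies the realization axiom $(\star)$ to the two morphisms of $\mathbb{E}$-extensions between $\left[\begin{smallmatrix}0\\1\end{smallmatrix}\right]_*\delta$ and $\delta$, obtaining comparison maps $\left[\begin{smallmatrix}a&b\end{smallmatrix}\right]\colon D\oplus B\to B'$ and $\left[\begin{smallmatrix}d\\b'\end{smallmatrix}\right]\colon B'\to D\oplus B$ in both directions; composing these and invoking \cite[Corollary 3.6]{NP} twice shows that $bb'$ and $b'b$ are isomorphisms, hence so is $b$. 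You instead manufacture a second realization of $\left[\begin{smallmatrix}0\\1\end{smallmatrix}\right]_*\delta$ by adding the split $\mathbb{E}$-triangle on $D$ to a chosen realization of $\delta$ (additivity of $\mathfrak{s}$), use the fact that $\mathfrak{s}$ assigns a \emph{single} equivalence class to obtain one isomorphism $\beta$ of the middle terms, and extract the corner $b=\beta_{22}$ from the elementary observation that an upper-triangular isomorphism with identity $(1,1)$-block has invertible $(2,2)$-block (writing out $\beta^{-1}\beta=1$ and $\beta\beta^{-1}=1$ in blocks confirms this). So you trade the paper's appeal to the realization axiom and to Corollary 3.6 for additivity of $\mathfrak{s}$ plus block-matrix algebra; both are sound, and your version isolates the key point rather cleanly. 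Part (2) and the reduction of the ``moreover'' statement to the diagonal case via conjugation by $\left[\begin{smallmatrix}1&0\\-m&1\end{smallmatrix}\right]$ agree with the paper's arguments up to the order of the steps. The one informality is your silent identification of $0\oplus A$ with $A$ (and $0\oplus C$ with $C$) when forming the direct sums; the paper avoids this by taking an arbitrary realization of the pushed-forward (resp.\ pulled-back) extension and exhibiting an explicit isomorphism onto the block sequence, but the gap is purely cosmetic.
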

\begin{proof} (1) Assume that the $\mathbb{E}$-triangle $\delta\in\mathbb{E}(C,A)$ is realized as  $\xymatrix{A\ar[r]^{x}&B'\ar[r]^{y}&C\ar@{-->}[r]^{\delta}&}$. { By Definition \ref{def1},} there exists a morphism $ \xymatrix{{{\tiny\begin{bmatrix}a&b\end{bmatrix}}}\colon D\oplus B\ar[r]&B'}$ which gives the morphism of $\mathbb{E}$-triangles
$$\xymatrix@C=1.2cm{D\oplus A\ar[d]^{\tiny\begin{bmatrix}0&1\end{bmatrix}}\ar[r]^{\tiny\begin{bmatrix}1&0\\0&f\end{bmatrix}}&D\oplus B\ar[r]^{\quad\tiny\begin{bmatrix}0&g\end{bmatrix}}\ar@{-->}[d]^{{\tiny\begin{bmatrix}a&b\end{bmatrix}}}&C\ar@{=}[d]
\ar@{-->}[r]^{{\tiny\begin{bmatrix}0\\1\end{bmatrix}}_*\delta}&\\
A\ar[r]^x&B'\ar[r]^y&C\ar@{-->}[r]^\delta&.}$$
Hence {$a=0$}, $x=bf$ and $g=yb$. Meanwhile, { By Definition \ref{def1},} there exists a morphism
${ \tiny\begin{bmatrix}d\\b'\end{bmatrix}}\colon B'\to D\oplus B$ which gives the morphism of $\mathbb{E}$-triangles
$$\xymatrix@C=1.2cm{ A\ar[d]_{\tiny\begin{bmatrix}0\\1\end{bmatrix}}\ar[r]^x& B'\ar[r]^y\ar@{-->}[d]_{\tiny\begin{bmatrix}d\\b'\end{bmatrix}}&C\ar@{=}[d]
\ar@{-->}[r]^{\delta}&\\
D\oplus A\ar[r]^{\tiny\begin{bmatrix}1&0\\0&f\end{bmatrix}}&D\oplus B\ar[r]^{\quad\tiny\begin{bmatrix}0&g\end{bmatrix}}&C\ar@{-->}[r]^{\tiny\begin{bmatrix}0\\1\end{bmatrix}_*\delta}&.}$$
So we have $f=b'x$ and $y=gb'$.
It is easy to check that there exist  morphisms of $\mathbb{E}$-triangles
$$\xymatrix{ A\ar@{=}[d]\ar[r]^x& B'\ar[r]^y\ar[d]_{bb'}&C\ar@{=}[d]
\ar@{-->}[r]^{\delta}&\\
 A\ar[r]^{x}& B'\ar[r]^{y}&C\ar@{-->}[r]^{\delta}&}$$
and
$$\xymatrix@C=1.2cm{D\oplus A\ar@{=}[d]\ar[r]^{\tiny\begin{bmatrix}1&0\\0&f\end{bmatrix}}&D\oplus B\ar[r]^{\quad\tiny\begin{bmatrix}0&g\end{bmatrix}}\ar[d]^{\tiny\begin{bmatrix}1&0\\0&b'b\end{bmatrix}}&C\ar@{=}[d]
\ar@{-->}[r]^{{\tiny\begin{bmatrix}0\\1\end{bmatrix}}_*\delta}&\\
D\oplus A\ar[r]_{\tiny\begin{bmatrix}1&0\\0&f\end{bmatrix}}&D\oplus B\ar[r]_{\quad\tiny\begin{bmatrix}0&g\end{bmatrix}}&C\ar@{-->}[r]^{\tiny\begin{bmatrix}0\\1\end{bmatrix}_*\delta}&.}$$
It follows from \cite[Corollary 3.6]{NP} that $bb'$ and ${\tiny\begin{bmatrix}1&0\\0&b'b\end{bmatrix}}$ are isomorphisms. Hence $b$ is an isomorphism.
 So we have the following commutative diagram
$$\xymatrix@C=1.5cm@R=0.5cm{&B\ar[dr]^g\ar[dd]^b_\cong&\\
A\ar[ur]^f\ar[dr]_x&&C\\
&B'\ar[ur]_y&
}$$
which implies that $\xymatrix{A\ar[r]^{f}&B\ar[r]^{g}&C\ar@{-->}[r]^{\delta}&}$ is an $\mathbb{E}$-triangle.

Moreover, if $\xymatrix@C=1.2cm{D\oplus A\ar[r]^{\tiny\begin{bmatrix}1&0\\m&f\end{bmatrix}}&D\oplus B\ar[r]^{\quad\tiny\begin{bmatrix}n&g\end{bmatrix}}&C\ar@{-->}[r]^{{\tiny\begin{bmatrix}0\\1\end{bmatrix}}_*\delta}&}$ is an $\mathbb{E}$-triangle, then
$$0={\tiny\begin{bmatrix}n&g\end{bmatrix}}{\tiny\begin{bmatrix}1&0\\m&f\end{bmatrix}}={\tiny\begin{bmatrix}n+gm&gf\end{bmatrix}},$$ which implies $n=-gm$. It is easy to check that the following { is a} commutative diagram.

$$\xymatrix{&D\oplus B\ar[dr]^{\tiny\begin{bmatrix}n&g\end{bmatrix}}\ar[dd]_{\tiny\begin{bmatrix}1&0\\-m&1\end{bmatrix}}^\cong&\\
D\oplus A\ar[dr]_{\tiny\begin{bmatrix}1&0\\0&f\end{bmatrix}}\ar[ur]^{\tiny\begin{bmatrix}1&0\\m&f\end{bmatrix}}&&C\\
&D\oplus B\ar[ur]_{\tiny\begin{bmatrix}0&g\end{bmatrix}}&
}$$
Hence $\xymatrix@C=1.2cm{D\oplus A\ar[r]^{\tiny\begin{bmatrix}1&0\\0&f\end{bmatrix}}&D\oplus B\ar[r]^{\quad\tiny\begin{bmatrix}0&g\end{bmatrix}}&C\ar@{-->}[r]^{{\tiny\begin{bmatrix}0\\1\end{bmatrix}}_*\delta}&}$ is an $\mathbb{E}$-triangle, and $\xymatrix{A\ar[r]^{f}&B\ar[r]^{g}&C\ar@{-->}[r]^{\delta}&}$ is an $\mathbb{E}$-triangle by above argument.

(2) Let $\xymatrix{A\ar[r]^{f}&E\ar[r]^{g}&
D\oplus C\ar@{-->}[r]^{\ \ \ \ {\tiny\begin{bmatrix}0&1\end{bmatrix}}_*(0\oplus\delta)}&}$ be  any $\mathbb{E}$-triangle realizing ${\tiny\begin{bmatrix}0&1\end{bmatrix}_*(0\oplus\delta)}$. Then we have the following commutative diagram
$$\xymatrix{0\oplus A\ar[r]^{\tiny\begin{bmatrix}0&0\\0&x\end{bmatrix}}\ar[d]^{\tiny\begin{bmatrix}0&1\end{bmatrix}}&D\oplus B
\ar[r]^{\tiny\begin{bmatrix}1&0\\0&y\end{bmatrix}}\ar[d]^{\tiny\begin{bmatrix}d&b\end{bmatrix}}&D\oplus C\ar@{-->}[r]^{0\oplus \delta}\ar@{=}[d]&\\
A\ar[r]^f&E\ar[r]^g&D\oplus C\ar@{-->}[r]^{\ \ \ \ \tiny\begin{bmatrix}0&1\end{bmatrix}_*(0\oplus\delta)}&.
}$$
It is easy to see that $\xymatrix{{\tiny\begin{bmatrix}0&1\end{bmatrix}}: 0\oplus A\ar[r]&A}$ is an isomorphism, hence
$\xymatrix{{\tiny\begin{bmatrix}d&b\end{bmatrix}}: D\oplus B\ar[r]&E}$ is an isomorphism by \cite[Corollary 3.6]{NP}. Note that $f=bx$, so we have the following commutative diagram
$$\xymatrix{&D\oplus B\ar[dd]^{\cong}_{\tiny\begin{bmatrix}d&b\end{bmatrix}}\ar[dr]^{\tiny\begin{bmatrix}1&0\\0&y\end{bmatrix}}&\\
   A\ar[ur]^{\tiny\begin{bmatrix}0\\x\end{bmatrix}}\ar[dr]_{f}&&D\oplus C\\
   &E\ar[ur]_{g}&
   }$$
   which implies that $\xymatrix{A\ar[r]^{\tiny\begin{bmatrix}0\\x\end{bmatrix}\ \ \ }&D\oplus B\ar[r]^{\tiny\begin{bmatrix}1&0\\0&y\end{bmatrix}}&
D\oplus C\ar@{-->}[r]^{\ \ \ \ {\tiny\begin{bmatrix}0&1\end{bmatrix}}_*(0\oplus\delta)}&}$ is  an $\mathbb{E}$-triangle. Since $${\tiny\begin{bmatrix}1\\0\end{bmatrix}}^*{\tiny\begin{bmatrix}0&1\end{bmatrix}}_*(0\oplus\delta)=0=
{\tiny\begin{bmatrix}1\\0\end{bmatrix}}^*{\tiny\begin{bmatrix}0&1\end{bmatrix}}^*\delta~\textrm{and}~ {\tiny\begin{bmatrix}0\\1\end{bmatrix}}^*{\tiny\begin{bmatrix}0&1\end{bmatrix}}_*(0\oplus\delta)=\delta=
{\tiny\begin{bmatrix}0\\1\end{bmatrix}}^*{\tiny\begin{bmatrix}0&1\end{bmatrix}}^*\delta,$$ we have ${\tiny\begin{bmatrix}0&1\end{bmatrix}}_*(0\oplus\delta)={\tiny\begin{bmatrix}0&1\end{bmatrix}}^*\delta$, as desired.
   \end{proof}
The next lemma parallels \cite[Lemma 3.14]{NP} and its dual.

 \begin{lem}\label{lemc}
\emph{(1)} Let $\xymatrix@C=2em{A\ar[r]^f&B\ar[r]^{f'}&C\ar@{-->}[r]^{\delta_f}&,}$ $\xymatrix@C=2em{B\ar[r]^g&D\ar[r]^{g'}&F\ar@{-->}[r]^{\delta_g}&}$ and
 $$\xymatrix@C=2em{A\ar[r]^h&D\ar[r]^{h'}&E\ar@{-->}[r]^{\delta_h}&}$$ be any triplet of $\mathbb{E}$-triangles satisfying $h=gf$. Then there are morphisms $d$ and $e$ in $\mathcal{C}$ which make the diagram
 $$\xymatrix{A\ar[r]^{f}\ar@{=}[d]&B\ar[r]^{f'}\ar[d]_{g}&C\ar[d]^d\ar@{-->}[r]^{\delta_f}&\\
A\ar[r]^h&D\ar[r]^{h}\ar[d]_{g'}&E\ar[d]^e\ar@{-->}[r]^{\delta_h}&\\
&F\ar@{-->}[d]^{\delta_g}\ar@{=}[r]&F\ar@{-->}[d]^{f'_*(\delta_g)}&\\
&&&}$$
commutative, and satisfying the following compatibilities.

\emph{(i)} $\xymatrix@C=2em{C\ar[r]^d&E\ar[r]^{e}&F\ar@{-->}[r]^{f'_*(\delta_g)}&}$ is an $\mathbb{E}$-triangle,

\emph{(ii)} $d^*(\delta_h)=\delta_f$,

\emph{(iii)} $e^*(\delta_g)=f_*(\delta_h)$,

\emph{(iv)} $\xymatrix@C=2em{B\ar[r]^{\tiny\begin{bmatrix}g\\f'\end{bmatrix}\ \ \ }&D\oplus C\ar[r]^{\tiny\ \ \begin{bmatrix}h&-d\end{bmatrix}}&E\ar@{-->}[r]^{f_*(\delta_h)}&}$ is an $\mathbb{E}$-triangle.

\emph{(2)} Dual of \emph{(1)}.\end{lem}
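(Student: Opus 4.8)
The plan is to treat this as a strengthening of (ET4): it must work for the \emph{prescribed} third $\mathbb{E}$-triangle $A\xrightarrow{h}D\xrightarrow{h'}E$ realizing $\delta_h$ (rather than one manufactured by the axiom), and it must record the extra mapping-cone compatibility (iv). I would first produce the two connecting morphisms $d$ and $e$. For $d$, apply (ET3) to the left commutative square with rows $f,h$ and verticals $1_A,g$, which commutes because $h=gf$; (ET3) returns a morphism of $\mathbb{E}$-extensions $(1_A,d)\colon\delta_f\to\delta_h$ with $df'=h'g$, and its defining equality $1_{A*}\delta_f=d^{*}\delta_h$ is exactly (ii). For $e$, note that $g'h=g'gf=0$, so by the long exact sequence of \cite[Corollary~3.12]{NP} applied to $A\xrightarrow{h}D\xrightarrow{h'}E$ the map $g'$ factors through the deflation $h'$, yielding $e\colon E\to F$ with $eh'=g'$; a direct computation then gives $edf'=eh'g=g'g=0$.

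Next I would establish (iv) together with a clean description of the middle object. Applying Lemma~\ref{lem1}(2) to the pair $A\xrightarrow{f}B\xrightarrow{f'}C$ and $A\xrightarrow{h}D\xrightarrow{h'}E$, which share the object $A$, produces an object $M$ and $\mathbb{E}$-triangles $D\xrightarrow{m_1}M\xrightarrow{e_1}C$ realizing $h_*\delta_f$ and $B\xrightarrow{m_2}M\xrightarrow{e_2}E$ realizing $f_*\delta_h$. The key point is that $f_*\delta_f=0$ by \cite[Lemma~3.2]{NP}, so $h_*\delta_f=g_*f_*\delta_f=0$; hence the first $\mathbb{E}$-triangle is split and $M\cong D\oplus C$ compatibly with $m_1,e_1$. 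After normalizing so that $m_1=\left[\begin{smallmatrix}1\\0\end{smallmatrix}\right]$ and $e_1=\left[\begin{smallmatrix}0&1\end{smallmatrix}\right]$, the first component of $m_2$ differs from $g$ by a map killed by $f$, hence factoring through $f'$; correcting by the corresponding unipotent automorphism $\left[\begin{smallmatrix}1&t\\0&1\end{smallmatrix}\right]$ of $D\oplus C$ brings the second $\mathbb{E}$-triangle to the form $B\xrightarrow{\left[\begin{smallmatrix}g\\f'\end{smallmatrix}\right]}D\oplus C\xrightarrow{\left[\begin{smallmatrix}h'&-d\end{smallmatrix}\right]}E$ realizing $f_*\delta_h$, which is (iv). Reading off that consecutive composites vanish recovers $df'=h'g$, consistent with the $d$ of the first step.

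It remains to prove (i) and (iii), and here I would bring in (ET4) itself. Applying (ET4) to the composable pair $(\delta_f,\delta_g)$ gives a model completion: an object $E_0$, maps $d_0,e_0$, and an $\mathbb{E}$-extension $\delta_0$ realized by $A\xrightarrow{h}D\xrightarrow{h'_0}E_0$, satisfying the analogues of (i)--(iii). Using (ET3) on the square with rows $h,h$ and verticals $1_A,1_D$ produces a morphism of $\mathbb{E}$-triangles $(1_A,1_D,c)$ from the given completion to the model one with $c^{*}\delta_0=\delta_h$ and $ch'=h'_0$; the two completions are then identified by an isomorphism via \cite[Corollary~3.6]{NP}, after which the data $(d_0,e_0)$ transport across $c$ to our $(d,e)$, giving that $C\xrightarrow{d}E\xrightarrow{e}F$ realizes $f'_*(\delta_g)$, which is (i), and that $e^{*}(\delta_g)=f_*(\delta_h)$, which is (iii). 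The dual statement (2) follows by the dual argument throughout.

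The step I expect to be the main obstacle is this last identification. Producing the commuting maps is routine, but promoting mere commutativity of diagrams to the exact equalities of $\mathbb{E}$-extensions demanded in (i) and (iii) is delicate precisely because $\mathfrak{s}$ is only a realization: equal $\mathbb{E}$-extensions yield \emph{equivalent}, not identical, conflations, and the prescribed third $\mathbb{E}$-triangle need not be the one that (ET4) builds. Controlling this forces one to track the $\mathbb{E}$-extension equalities through the comparison morphism and to invoke \cite[Corollary~3.6]{NP}, \cite[Lemma~3.2]{NP} and the bifunctoriality of $\mathbb{E}$; the splitting observation $h_*\delta_f=0$ from the second step is what keeps this bookkeeping manageable.
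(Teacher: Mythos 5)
Your proposal assembles the right ingredients but never produces a \emph{single} pair $(d,e)$ satisfying (i)--(iv) simultaneously, and that is precisely where the content of the lemma lies. You construct $d$ three times: once by (ET3) applied to the left square (this secures (ii)), once as (minus) the second component of $e_2$ after normalizing the diagram of Lemma \ref{lem1}(2) (this secures (iv)), and once by transporting the (ET4) datum $d_0$ across the comparison isomorphism $c$ (this secures (i) and (iii)). All three morphisms satisfy $df'=h'g$, but a fill-in of that square is only determined up to adding a morphism $\epsilon\colon C\to E$ with $\epsilon f'=0$, and such $\epsilon$ need not vanish; writing that the constructions are ``consistent'' or that the data ``transport across $c$ to our $(d,e)$'' asserts, without proof, that these morphisms coincide. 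This is not a cosmetic issue: not every fill-in $d$ with $df'=h'g$ and $d^*(\delta_h)=\delta_f$ makes the sequence in (iv) a conflation realizing $f_*(\delta_h)$ (the classical ``good morphism'' phenomenon), and conversely the $d$ produced by your normalization step is not shown to satisfy $d^*(\delta_h)=\delta_f$, let alone (i). So as written you prove each of (i)--(iv) for \emph{some} $d$, but not all four for \emph{one} $d$ and \emph{one} $e$.

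The paper resolves exactly this coherence problem with Lemma \ref{lemb}(2): a single application to $g\colon B\to D$ with $gf=h$ returns one morphism $d$ that is simultaneously a morphism of $\mathbb{E}$-extensions (giving (ii)) and whose mapping cone $B\to D\oplus C\to E$ realizes $f_*(\delta_h)$ (giving (iv)); a second application, to that mapping cone and to $\delta_g$ via the projection $D\oplus C\to D$, returns $e$ with $e^*(\delta_g)=f_*(\delta_h)$ (giving (iii)) together with a larger mapping cone realizing $\left[\begin{smallmatrix}g\\ f'\end{smallmatrix}\right]_*(\delta_g)=\left[\begin{smallmatrix}0\\ 1\end{smallmatrix}\right]_*f'_*(\delta_g)$, from which (i) is extracted by cancelling the summand $D$ via Lemma \ref{lema}(1). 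Your normalization step is essentially a re-proof of the mapping-cone half of Lemma \ref{lemb}(2); to repair the argument you would need to also track the extension classes through that normalization (as is done in the proof of Lemma \ref{lemb}) so that the \emph{same} $d$ carries (ii), and then run the second application of that lemma rather than appealing to (ET4), whose output carries no mapping-cone information for the prescribed realization of $\delta_h$.
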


\begin{proof} (1) There is a morphism $d: C\rightarrow E$ such that $d^*(\delta_h)=\delta_f$ and $\xymatrix@C=2em{B\ar[r]^{\tiny\begin{bmatrix}g\\f'\end{bmatrix}\ \ \ }&D\oplus C\ar[r]^{\tiny\ \ \begin{bmatrix}h&-d\end{bmatrix}}&E\ar@{-->}[r]^{f_*(\delta_h)}&}$ is an $\mathbb{E}$-triangle by Lemma \ref{lemb}(2). Now we apply Lemma \ref{lemb}(2) to the diagram
$$\xymatrix{B\ar[r]^{\tiny\begin{bmatrix}g\\f'\end{bmatrix}}\ar@{=}[d]&D\oplus C\ar[r]^{\tiny\begin{bmatrix}h&-d\end{bmatrix}}\ar[d]_{\tiny\begin{bmatrix}1&0\end{bmatrix}}&E\ar@{-->}[d]^{e}\ar@{-->}[r]^{f_*(\delta_h)}&\\
B\ar[r]^g&D\ar[r]^{g'}&F\ar@{-->}[r]^{\delta_g}&
}$$
there is a morphism $e: E\rightarrow F$ such that $e^*(\delta_g)=f_*(\delta_h)$ and $\xymatrix@C=2em{D\oplus C\ar[r]^{\tiny\begin{bmatrix}1&0\\h&-d\end{bmatrix}\ \ \ }&D\oplus E\ar[r]^{\tiny\ \ \begin{bmatrix}g'&-e\end{bmatrix}}&F\ar@{-->}[r]^{\tiny\begin{bmatrix}g\\f'\end{bmatrix}_*(\delta_g)}&}$ is an $\mathbb{E}$-triangle. Since ${\tiny\begin{bmatrix}1&0\end{bmatrix}}_*{\tiny\begin{bmatrix}g\\f'\end{bmatrix}}_*(\delta_g)=g_*\delta_g=0$ and ${\tiny\begin{bmatrix}0&1\end{bmatrix}}_*{\tiny\begin{bmatrix}g\\f'\end{bmatrix}}_*(\delta_g)=f'_*(\delta_g)$, we have ${\tiny\begin{bmatrix}g\\f'\end{bmatrix}}_*(\delta_g)= {\tiny\begin{bmatrix}0\\1\end{bmatrix}}_*f'_*(\delta_g)$. It follows from Lemma \ref{lema}(1) that $\xymatrix@C=2em{C\ar[r]^{-d}&E\ar[r]^{-e}&F\ar@{-->}[r]^{f'_*(\delta_g)}&}$ is an $\mathbb{E}$-triangle. It is easy to check that $\xymatrix@C=2em{C\ar[r]^{d}&E\ar[r]^{e}&F\ar@{-->}[r]^{f'_*(\delta_g)}&}$ is an $\mathbb{E}$-triangle, as desired.
\end{proof}

\begin{prop}\label{thm} Let $\xi$ be a class of $\mathbb{E}$-triangles satisfying the conditions (1) and (2) in Definition \ref{def:proper class}.


Then $\xi$ is saturated if and only if for the diagram in Lemma \ref{lem1}(2)
$$\xymatrix{
     A\ar[d]_{x_2} \ar[r]^{x_1} & B_1 \ar[d]^{m_2}\ar[r]^{y_1}&C_1\ar@{=}[d] \\
  B_2 \ar[d]_{y_2} \ar[r]^{m_1} & M \ar[d]^{e_2} \ar[r]^{e_1} & C_1 \\
  C_2 \ar@{=}[r] & C_2 &   }
  $$
if the $\mathbb{E}$-triangles $\xymatrix@C=2em{A\ar[r]^{x_2}&B_2\ar[r]^{y_2}&C_2\ar@{-->}[r]^{\delta_2}&}$ and $\xymatrix@C=2em{B_2\ar[r]^{m_1}&M\ar[r]^{e_1}&C_1\ar@{-->}[r]^{x_{2*}\delta_1}&}$ are in $\xi$, then the $\mathbb{E}$-triangle $\xymatrix@C=2em{A\ar[r]^{x_1}&B_1\ar[r]^{y_1}&C_1\ar@{-->}[r]^{\delta_1}&}$ is also in $\xi$.

 \end{prop}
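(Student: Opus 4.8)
The two conditions being compared have the same shape: in a nine-term diagram produced by Lemma \ref{lem1}, membership of two constituent $\mathbb{E}$-triangles in $\xi$ is required to force a third into $\xi$. Saturatedness is phrased through the diagram of Lemma \ref{lem1}(1) (two $\mathbb{E}$-triangles with common third term), whereas the condition in the statement is phrased through the diagram of Lemma \ref{lem1}(2) (two $\mathbb{E}$-triangles with common first term). The plan is to prove each implication by feeding the available $\mathbb{E}$-triangles into the \emph{other} instance of Lemma \ref{lem1}, and then to supply the one missing hypothesis by a short extension computation rewriting an auxiliary extension as a (co)base change of $\delta_2$.

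Necessity $(\Rightarrow)$. Assume $\xi$ is saturated and take the diagram of Lemma \ref{lem1}(2) with $\xymatrix@C=1.3em{A\ar[r]&B_2\ar[r]&C_2}$ (realizing $\delta_2$) and $\xymatrix@C=1.3em{B_2\ar[r]^{m_1}&M\ar[r]^{e_1}&C_1}$ (realizing $x_{2*}\delta_1$) in $\xi$. The $\mathbb{E}$-triangles realizing $\delta_1$ and $x_{2*}\delta_1$ share the third term $C_1$, so I apply Lemma \ref{lem1}(1) to them; its middle row realizes $e_1^{*}\delta_1$, and to invoke saturatedness I must show this row lies in $\xi$. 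The key computation: since base change and cobase change commute, $x_{2*}(e_1^{*}\delta_1)=e_1^{*}(x_{2*}\delta_1)$, and the right-hand side is $0$ because $e_1$ is the deflation of the $\mathbb{E}$-triangle realizing $x_{2*}\delta_1$ (so $e_1^{*}(x_{2*}\delta_1)=0$ by \cite[Lemma 3.2]{NP}). Plugging $x_{2*}(e_1^{*}\delta_1)=0$ into the exact sequence $\xymatrix@C=1.6em{\mathcal{C}(M,C_2)\ar[r]^{(\delta_2)_\sharp}&\mathbb{E}(M,A)\ar[r]^{x_{2*}}&\mathbb{E}(M,B_2)}$ of \cite[Corollary 3.12]{NP} yields $e_1^{*}\delta_1=w^{*}\delta_2$ for some $w\colon M\to C_2$. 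As $\delta_2$ lies in $\xi$ and $\xi$ is closed under base change (and isomorphisms), the $\mathbb{E}$-triangle realizing $e_1^{*}\delta_1$ lies in $\xi$. Saturatedness applied to the $\xi$-triangles realizing $x_{2*}\delta_1$ and $e_1^{*}\delta_1$ then gives $\delta_1\in\xi$, as required.

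Sufficiency $(\Leftarrow)$. This is dual. Assume the stated condition and take the diagram of Lemma \ref{lem1}(1) with $\xymatrix@C=1.3em{A_2\ar[r]^{x_2}&B_2\ar[r]^{y_2}&C}$ (realizing $\delta_2$) and $\xymatrix@C=1.3em{A_1\ar[r]^{m_1}&M\ar[r]^{e_1}&B_2}$ (realizing $y_2^{*}\delta_1$) in $\xi$. The $\mathbb{E}$-triangles realizing $\delta_1$ and $y_2^{*}\delta_1$ share the first term $A_1$, so I apply Lemma \ref{lem1}(2) to them; its middle row realizes $(m_1)_*\delta_1$. Dually, $y_2^{*}\big((m_1)_*\delta_1\big)=(m_1)_*\big(y_2^{*}\delta_1\big)=0$, since $m_1$ is the inflation of the $\mathbb{E}$-triangle realizing $y_2^{*}\delta_1$ (the dual of \cite[Lemma 3.2]{NP}). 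The exact sequence $\xymatrix@C=1.6em{\mathcal{C}(A_2,M)\ar[r]^{(\delta_2)^\sharp}&\mathbb{E}(C,M)\ar[r]^{y_2^{*}}&\mathbb{E}(B_2,M)}$ then gives $(m_1)_*\delta_1=z_*\delta_2$ for some $z\colon A_2\to M$, so the $\mathbb{E}$-triangle realizing $(m_1)_*\delta_1$ lies in $\xi$ by closure under cobase change. The stated condition, applied to the $\xi$-triangles realizing $y_2^{*}\delta_1$ and $(m_1)_*\delta_1$, delivers $\delta_1\in\xi$.

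I expect the single extension-arithmetic step in each direction to be the main point — rewriting the auxiliary extension ($e_1^{*}\delta_1$, respectively $(m_1)_*\delta_1$) as a (co)base change of $\delta_2$. Everything else is bookkeeping with Lemma \ref{lem1} and the closure axioms of Definition \ref{def:proper class}; the vanishing inputs $e_1^{*}(x_{2*}\delta_1)=0$ and $(m_1)_*(y_2^{*}\delta_1)=0$ are precisely what place the auxiliary extension in the image of the relevant connecting map $(\delta_2)_\sharp$, respectively $(\delta_2)^\sharp$.
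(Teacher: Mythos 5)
Your proof is correct, but it takes a genuinely different route from the paper's. The paper proves only the ``only if'' direction in detail (declaring the converse ``similar''), and does so by invoking (ET4) through Lemma \ref{lemc}(1) to manufacture an auxiliary object $W$ and extension $\rho\in\mathbb{E}(W,A)$, then verifying $\rho\in\xi$ by applying saturatedness to a specially built Lemma \ref{lem1}(1)-diagram whose middle column and row involve the direct sums $M\oplus B_2$ and $M\oplus C_2$ (this step leans on the direct-sum manipulations of Lemma \ref{lema}), and finally recovering $\delta_1=c^{*}\rho$ via (ET3) and closure under base change. You instead feed the two given $\mathbb{E}$-triangles directly into the \emph{other} half of Lemma \ref{lem1} — they share the third term $C_1$ in one direction and the first term $A_1$ in the other — and observe that the only missing hypothesis of the saturation (resp. stated) condition is that the auxiliary extension $e_1^{*}\delta_1$ (resp. $(m_1)_{*}\delta_1$) lies in $\xi$; the vanishing $e_1^{*}(x_{2*}\delta_1)=0$ (resp. $(m_1)_{*}(y_2^{*}\delta_1)=0$), together with commutativity of base and cobase change and the long exact sequence of \cite[Corollary 3.12]{NP}, exhibits that extension as $w^{*}\delta_2$ (resp. $z_{*}\delta_2$), hence in $\xi$ by closure under (co)base change. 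All the ingredients you use are available at this point in the paper, the role-matching with the asymmetric saturation definition checks out, and your argument has the added virtue of treating both implications symmetrically and explicitly, whereas the paper's machinery-heavy construction mainly exercises the (ET4)-based lemmas it has set up for later use.
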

  \begin{proof}
 We only prove the `only if' part, the proof of `if' part is similar.
 { Now we assume that $\xi$ is saturated.}
By (ET4)  and Lemma \ref{lemc}(1), there exists the following commutative diagram
 $$\xymatrix{A\ar[r]^{x_2}\ar@{=}[d]&B_2\ar[r]^{y_2}\ar[d]_{m_1}&C_2\ar[d]^d\ar@{-->}[r]^{\delta_2}&\\
A\ar[r]^f&M\ar[r]^{g}\ar[d]_{e_1}&W\ar[d]^h\ar@{-->}[r]^{\rho}&\\
&C_1\ar@{-->}[d]^{x_{2*}\delta_1}\ar@{=}[r]&C_1\ar@{-->}[d]^{y_{2*}x_{2*}\delta_1}&\\
&&&
}$$
in $\mathcal{C}$, and an $\mathbb{E}$-extension $\rho\in\mathbb{E}(W, A)$ realized by $\xymatrix{A\ar[r]^f&M\ar[r]^{g}&W},$
which satisfy the following compatibilities.

(i) $\xymatrix{C_2\ar[r]^d&W\ar[r]^{h}&C_1}$ realizes $y_{2*}x_{2*}\delta_1$,

(ii) $d^*\rho=\delta_2$,

(iii) $x_{2*}\rho=h^*x_{2*}\delta_1$,

(iv) $\xymatrix@C=2em{B_2\ar[r]^{\tiny\begin{bmatrix}m_1\\y_2\end{bmatrix}\ \ \ }&M\oplus C_2\ar[r]^{\tiny\ \ \begin{bmatrix}g&-d\end{bmatrix}}&W\ar@{-->}[r]^{x_{2*}\rho}&}$ is an $\mathbb{E}$-triangle.

\noindent Note that $\xymatrix@C=2em{B_2\ar[r]^{m_1}&M\ar[r]^{e_1}&C_1\ar@{-->}[r]^{x_{2*}\delta_1}&}$ is an $\mathbb{E}$-triangle in $\xi$, then so is $$\xymatrix@C=1.2cm{B_2\ar[r]^{\tiny\begin{bmatrix}m_1\\y_2\end{bmatrix}\ \ \ }&M\oplus C_2\ar[r]^{\tiny\ \ \begin{bmatrix}g&-d\end{bmatrix}}&W\ar@{-->}[r]^{x_{2*}\rho}&}$$ by (iii) because $\xi$ is closed under base change.
It is easy to check that $$\xymatrix@C=2em{B_2\ar[r]^{\tiny\begin{bmatrix}m_1\\-y_2\end{bmatrix}\ \ \ }&M\oplus C_2\ar[r]^{\tiny\ \ \begin{bmatrix}g&d\end{bmatrix}}&W\ar@{-->}[r]^{x_{2*}\rho}&}$$ is an $\mathbb{E}$-triangle in $\xi$.
It follows from Lemma \ref{lema}(2) that
$$\xymatrix{ A\ar[r]^{\tiny\begin{bmatrix}0\\x_2\end{bmatrix}\ \ \ } &M\oplus B_2
   \ar[r]^{\tiny\begin{bmatrix}1&0\\0&y_2\end{bmatrix}} &M\oplus C_2\ar@{-->}[r]^{\ \ \ {\tiny\begin{bmatrix}0&1\end{bmatrix}}^*\delta_2} &
   }$$ is an $\mathbb{E}$-triangle, it is an $\mathbb{E}$-triangle in $\xi$ because $\xymatrix@C=2em{A\ar[r]^{x_2}&B_2\ar[r]^{y_2}&C_2\ar@{-->}[r]^{\delta_2}&}$ is an $\mathbb{E}$-triangle in $\xi$. We have $g^*x_{2*}\rho=x_{2*}g^*\rho=0$. One can prove that ${\tiny\begin{bmatrix}1\\0\end{bmatrix}}^*{\tiny\begin{bmatrix}g&d\end{bmatrix}}^*\rho=g^*\rho=0$ and ${\tiny\begin{bmatrix}0\\1\end{bmatrix}}^*{\tiny\begin{bmatrix}g&d\end{bmatrix}}^*\rho=d^*\rho=\delta_2$, which implies { ${\tiny\begin{bmatrix}g&d\end{bmatrix}}^*\rho={\tiny\begin{bmatrix}0&1\end{bmatrix}}^*\delta_2$}.
  It is straightforward to show that the following diagram is commutative.
 $$\xymatrix{\small
    & B_2\ar[d]_{\tiny\begin{bmatrix}m_1\\-1\end{bmatrix}} \ar@{=}[r] & B_2 \ar[d]^{\tiny\begin{bmatrix}m_1\\-y_2\end{bmatrix}} &\\
  A\ar@{=}[d] \ar[r]^{\tiny\begin{bmatrix}0\\x_2\end{bmatrix}\ \ \ \ \ } &M\oplus B_2 \ar[d]_{\tiny\begin{bmatrix}1&m_1\end{bmatrix}}
   \ar[r]^{\tiny\begin{bmatrix}1&0\\0&y_2\end{bmatrix}} & M\oplus C_2\ar[d]_{\tiny\begin{bmatrix}g&d\end{bmatrix}}\ar@{-->}[r]_{\ \ \ \ \ \ {\tiny\begin{bmatrix}0&1\end{bmatrix}}^*\delta_2} &\\
  A\ar[r]^{f}&M\ar[r]^g\ar@{-->}[d]^0&W\ar@{-->}[r]^{\rho}\ar@{-->}[d]^{x_{2*}\rho}&\\
  &&&   }
  $$
Hence
   $\xymatrix{A\ar[r]^f&M\ar[r]^g&W\ar@{-->}[r]^\rho&}$ is an $\mathbb{E}$-triangle in $\xi$ since $\xi$ is closed under saturated.
   Since $f=m_1x_2=m_2x_1$, applying (ET3) to
   $$\xymatrix{A\ar[r]^{x_1}\ar@{=}[d]&B_1\ar[r]^{y_1}\ar[d]^{m_2}&C_1\ar@{-->}[r]^{\delta_1}&\\
   A\ar[r]^f&M\ar[r]^g&W\ar@{-->}[r]^\rho&}$$
   we obtain a morphism $c\in \mathcal{C}(C_1, W)$ satisfying $\delta_1=c^*\rho$. Hence $\xymatrix@C=2em{A\ar[r]^{x_1}&B_1\ar[r]^{y_1}&C_1\ar@{-->}[r]^{\delta_1}&}$ is an $\mathbb{E}$-triangle in $\xi$.
\end{proof}

We are ready to prove Theorem \ref{thma}.

\emph{Proof of Theorem \ref{thma}}. ``$\Rightarrow$'' It is easy to check that $(\mathcal{C}, \mathbb{E}_\xi, \mathfrak{s}_\xi)$ is an extriangulated category by the definition of proper class and the proof of Proposition \ref{thm}.

``$\Leftarrow$'' { Assume that} $(\mathcal{C}, \mathbb{E}_\xi, \mathfrak{s}_\xi)$ is an extriangulated category. { It} is easy to check that $\xi$ satisfies the conditions (1) and (2) in Definition \ref{def:proper class}. Next we claim that $\xi$ is saturated. Consider the diagram in Lemma \ref{lem1}(2)
$$\xymatrix{
     A\ar[d]_{x_2} \ar[r]^{x_1} & B_1 \ar[d]^{m_2}\ar[r]^{y_1}&C_1\ar@{=}[d] \\
  B_2 \ar[d]_{y_2} \ar[r]^{m_1} & M \ar[d]^{e_2} \ar[r]^{e_1} & C_1 \\
  C_2 \ar@{=}[r] & C_2 &   }
  $$
 where the $\mathbb{E}$-triangles $\xymatrix@C=2em{A\ar[r]^{x_2}&B_2\ar[r]^{y_2}&C_2\ar@{-->}[r]^{\delta_2}&}$ and $\xymatrix@C=2em{B_2\ar[r]^{m_1}&M\ar[r]^{e_1}&C_1\ar@{-->}[r]^{x_{2*}\delta_1}&}$ are in $\xi$.

 Since $(\mathcal{C}, \mathbb{E}_\xi, \mathfrak{s}_\xi)$ is an extriangulated category,  there exists the following commutative diagram
 $$\xymatrix{A\ar[r]^{x_2}\ar@{=}[d]&B_2\ar[r]^{y_2}\ar[d]_{m_1}&C_2\ar[d]^d\ar@{-->}[r]^{\delta_2}&\\
A\ar[r]^f&M\ar[r]^{g}\ar[d]_{e_1}&W\ar[d]^h\ar@{-->}[r]^{\rho}&\\
&C_1\ar@{-->}[d]^{x_{2*}\delta_1}\ar@{=}[r]&C_1\ar@{-->}[d]^{y_{2*}x_{2*}\delta_1}&\\
&&&
}$$
 where $\xymatrix{A\ar[r]^f&M\ar[r]^{g}&W\ar@{-->}[r]^\rho&}$ is an $\mathbb{E}$-triangle in $\xi$ { by (ET4) of  the extriangulated category} $(\mathcal{C}, \mathbb{E}_\xi, \mathfrak{s}_\xi)$.  Since $f=m_1x_2=m_2x_1$, applying (ET3) of  extriangulated category $(\mathcal{C}, \mathbb{E}, \mathfrak{s})$ to
   $$\xymatrix{A\ar[r]^{x_1}\ar@{=}[d]&B_1\ar[r]^{y_1}\ar[d]^{m_2}&C_1\ar@{-->}[r]^{\delta_1}&\\
   A\ar[r]^f&M\ar[r]^g&W\ar@{-->}[r]^\rho&}$$
   we obtain a morphism $c\in \mathcal{C}(C_1, W)$ satisfying $\delta_1=c^*\rho$, hence $\xymatrix@C=2em{A\ar[r]^{x_1}&B_1\ar[r]^{y_1}&C_1\ar@{-->}[r]^{\delta_1}&}$ is an $\mathbb{E}$-triangle in $\xi$. So $\xi$ is saturated by Proposition \ref{thm}.\hfill$\Box$

\section{\bf $\xi$-$\mathcal{G}$projective objects and some fundamental properties}
Throughout the section, we assume that $\xi$ is a proper class of $\mathbb{E}$-triangles in an extriangulated category $(\mathcal{C}, \mathbb{E}, \mathfrak{s})$.

\subsection{The definition of $\xi$-$\mathcal{G}$projective objects} The following concept is inspired from $\xi$-projective objects with respect to a proper class of triangles in a triangulated category.

\begin{definition} {\rm An object $P\in\mathcal{C}$  is called {\it $\xi$-projective}  if for any $\mathbb{E}$-triangle $$\xymatrix{A\ar[r]^x& B\ar[r]^y& C \ar@{-->}[r]^{\delta}& }$$ in $\xi$, the induced sequence of abelian groups $\xymatrix@C=0.6cm{0\ar[r]& \mathcal{C}(P,A)\ar[r]& \mathcal{C}(P,B)\ar[r]&\mathcal{C}(P,C)\ar[r]& 0}$ is exact. Dually, we have the definition of {\it $\xi$-injective}.}
\end{definition}

We denote $\mathcal{P(\xi)}$ (respectively $\mathcal{I(\xi)}$) the class of $\xi$-projective (respectively $\xi$-injective) objects of $\mathcal{C}$. It follows from the definition that this subcategory $\mathcal{P}(\xi)$ and $\mathcal{I}(\xi)$ are full, additive, closed under isomorphisms and direct summands.

 An extriangulated  category $(\mathcal{C}, \mathbb{E}, \mathfrak{s})$ is said to  have {\it  enough
$\xi$-projectives} \ (respectively {\it  enough $\xi$-injectives}) provided that for each object $A$ there exists an $\mathbb{E}$-triangle $\xymatrix@C=2.1em{K\ar[r]& P\ar[r]&A\ar@{-->}[r]& }$ (respectively $\xymatrix@C=2em{A\ar[r]& I\ar[r]& K\ar@{-->}[r]&}$) in $\xi$ with $P\in\mathcal{P}(\xi)$ (respectively $I\in\mathcal{I}(\xi)$).

\begin{lem}\label{lem4}  If $\mathcal{C}$ has enough $\xi$-projectives, then an $\mathbb{E}$-triangle $\xymatrix@C=2em{A\ar[r]&B\ar[r]& C\ar@{-->}[r]&}$ is in $\xi$ if and only if  induced sequence of abelian groups $\xymatrix@C=1.7em{0\ar[r]&\mathcal{C}(P,A)\ar[r]&\mathcal{C}(P,B)\ar[r]&\mathcal{C}(P,C)\ar[r]&0}$ is exact for all $P\in\mathcal{P}(\xi)$.
\end{lem}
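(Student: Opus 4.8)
The plan is to treat the two implications separately; the forward implication is immediate from the definitions, while the reverse implication is where enough $\xi$-projectives and closure under cobase change enter.

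For the \emph{only if} direction, suppose the $\mathbb{E}$-triangle $A\xrightarrow{x}B\xrightarrow{y}C$ realizing some $\delta$ lies in $\xi$. Then for every $P\in\mathcal{P}(\xi)$ the induced sequence $0\to\mathcal{C}(P,A)\to\mathcal{C}(P,B)\to\mathcal{C}(P,C)\to 0$ is exact, simply by the definition of a $\xi$-projective object. Nothing beyond the definition is required here.

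For the \emph{if} direction, assume that $0\to\mathcal{C}(P,A)\to\mathcal{C}(P,B)\to\mathcal{C}(P,C)\to 0$ is exact for all $P\in\mathcal{P}(\xi)$. Applying the hypothesis that $\mathcal{C}$ has enough $\xi$-projectives to the object $C$, I fix an $\mathbb{E}$-triangle $K\xrightarrow{k}P\xrightarrow{p}C$ realizing some $\delta'$, lying in $\xi$, with $P\in\mathcal{P}(\xi)$. Feeding this particular $P$ into the exactness hypothesis shows that $\mathcal{C}(P,y)\colon\mathcal{C}(P,B)\to\mathcal{C}(P,C)$ is surjective, so the deflation $p$ lifts through $y$: there is $\tilde p\colon P\to B$ with $y\tilde p=p$.

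Now I apply Lemma \ref{lemb}(1) to the pair of $\mathbb{E}$-triangles $K\xrightarrow{k}P\xrightarrow{p}C$ (playing the role of the first triangle) and $A\xrightarrow{x}B\xrightarrow{y}C$ (the second), using the morphism $\tilde p$ with $y\tilde p=p$. This produces a morphism $f\colon K\to A$ making $(f,\tilde p,1_C)$ a morphism of $\mathbb{E}$-triangles; reading off the induced morphism of $\mathbb{E}$-extensions gives $\delta=f_*\delta'$. Since the triangle realizing $\delta'$ lies in $\xi$ and $\xi$ is closed under cobase change (along $f\colon K\to A$), the $\mathbb{E}$-triangle realizing $f_*\delta'=\delta$ lies in $\xi$ as well; as $\xi$ is closed under isomorphisms and all realizations of a fixed extension are equivalent, the original triangle $A\xrightarrow{x}B\xrightarrow{y}C$ belongs to $\xi$. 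The one point demanding care is the bookkeeping in Lemma \ref{lemb}(1): one must match the triangles to the correct slots and check that the resulting comparison of extensions genuinely reads $\delta=f_*\delta'$, so that closure under cobase change applies; the rest is routine.
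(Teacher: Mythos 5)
Your proof is correct and is exactly the argument the paper intends: the paper itself only says ``the proof is similar to \cite[Lemma 4.2]{Bel1}'', and the standard argument there is precisely yours --- lift the $\xi$-deflation $p\colon P\to C$ through $y$ using the assumed surjectivity of $\mathcal{C}(P,y)$, invoke Lemma \ref{lemb}(1) to get $f\colon K\to A$ with $f_*\delta'=\delta$, and conclude by closure of $\xi$ under cobase change. The bookkeeping you flag at the end does check out: in Lemma \ref{lemb}(1) the top row is the source extension and its proof shows $f_*(\text{top})=(\text{bottom})$, which with your assignment of slots reads $f_*\delta'=\delta$ as needed.
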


{ \proof The proof is similar to \cite[Lemma 4.2]{Bel1}.  \qed}

\begin{prop} {\rm (Schanuel's lemma).} If $\xymatrix@C=2em{K_i\ar[r]^{x_i}&P_i\ar[r]^{y_i}& C\ar@{-->}[r]^{\delta_i}&}$ are $\mathbb{E}$-triangles in $\xi$ with $P_i\in\mathcal{P}(\xi), i=1, 2$, then $K_1\oplus P_2\cong K_2\oplus P_1$.
\end{prop}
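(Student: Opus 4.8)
The plan is to realize both $K_1\oplus P_2$ and $K_2\oplus P_1$ as one and the same object $M$, obtained from the two given $\mathbb{E}$-triangles through the pullback-type construction of Lemma \ref{lem1}(1). First I would apply Lemma \ref{lem1}(1) to the pair of $\mathbb{E}$-triangles $\xymatrix{K_1\ar[r]^{x_1}&P_1\ar[r]^{y_1}&C\ar@{-->}[r]^{\delta_1}&}$ and $\xymatrix{K_2\ar[r]^{x_2}&P_2\ar[r]^{y_2}&C\ar@{-->}[r]^{\delta_2}&}$, which share the common third term $C$. This yields an object $M$ together with two $\mathbb{E}$-triangles
$$\xymatrix{K_1\ar[r]^{m_1}&M\ar[r]^{e_1}&P_2\ar@{-->}[r]^{y_2^*\delta_1}&} \quad \text{and} \quad \xymatrix{K_2\ar[r]^{m_2}&M\ar[r]^{e_2}&P_1\ar@{-->}[r]^{y_1^*\delta_2}&}.$$

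Next I would check that both of these $\mathbb{E}$-triangles lie in $\xi$. Since $\delta_1$ belongs to $\xi$ and, by the conclusion of Lemma \ref{lem1}(1), the extension in the first triangle is precisely the base change $y_2^*\delta_1$ of $\delta_1$ along $y_2\colon P_2\to C$, condition (2) of Definition \ref{def:proper class} (closure under base change) places the first $\mathbb{E}$-triangle in $\xi$; the symmetric argument, using that $\delta_2\in\xi$, places the second in $\xi$.

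Now I would exploit $\xi$-projectivity to split these triangles. Because $P_2\in\mathcal{P}(\xi)$ and $\xymatrix{K_1\ar[r]^{m_1}&M\ar[r]^{e_1}&P_2\ar@{-->}[r]&}$ is in $\xi$, the induced sequence $\xymatrix{0\ar[r]&\mathcal{C}(P_2,K_1)\ar[r]&\mathcal{C}(P_2,M)\ar[r]&\mathcal{C}(P_2,P_2)\ar[r]&0}$ is exact; in particular $1_{P_2}$ lifts along $e_1$, so $e_1$ is a retraction. By the elementary fact recorded just before Definition \ref{def:proper class} (an $\mathbb{E}$-triangle is split iff its deflation is a retraction), the triangle splits, i.e. $y_2^*\delta_1=0$, whence $M\cong K_1\oplus P_2$. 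Running the same argument with $P_1\in\mathcal{P}(\xi)$ on the second $\mathbb{E}$-triangle gives $M\cong K_2\oplus P_1$. Combining the two isomorphisms yields $K_1\oplus P_2\cong M\cong K_2\oplus P_1$, as required.

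As for obstacles, the argument is essentially formal once Lemma \ref{lem1}(1) is available, so no step is genuinely difficult. The two points needing care are: (a) identifying the extension appearing in Lemma \ref{lem1}(1) with the base change $y_2^*\delta_1$ (rather than an unrelated extension), which is exactly what the lemma asserts and which legitimizes the appeal to closure under base change; and (b) invoking the characterization of split $\mathbb{E}$-triangles to pass from "$e_1$ has a section" to the direct-sum decomposition of $M$.
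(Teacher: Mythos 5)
Your proof is correct, and it is essentially the argument the paper intends: the paper only remarks that the proof is ``similar to \cite[Proposition 4.4]{Bel1}'', and that reference's proof is exactly this homotopy-pullback argument --- form $M$ from the two $\mathbb{E}$-triangles over the common cone $C$ via Lemma \ref{lem1}(1), note the resulting extensions $y_2^*\delta_1$ and $y_1^*\delta_2$ lie in $\xi$ by closure under base change, and split both using $\xi$-projectivity of $P_2$ and $P_1$ respectively. All the ingredients you invoke (the identification of the extensions in Lemma \ref{lem1}(1), the splitting criterion stated before Definition \ref{def:proper class}, and $\mathfrak{s}(0)=0$) are available in the paper, so the adaptation to the extriangulated setting goes through as you describe.
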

{ \proof The proof is similar to \cite[Proposition 4.4]{Bel1}.  \qed}

If $\xymatrix@C=2em{K\ar[r]^{x}&P\ar[r]^{y}& C\ar@{-->}[r]^{\delta}&}$ is an $\mathbb{E}$-triangles in $\xi$ with $P\in\mathcal{P}(\xi)$, then we call the object $K$ a  {\it first $\xi$-syzygy} of $C$. An {\it $n${\rm th} $\xi$-syzygy} of $C$ is defined as usual by induction. By Schanuel's lemma any two $\xi$-syzygies of $C$ are isomorphic modulo $\xi$-projectives.

The {\it $\xi$-projective dimension} $\xi$-${\rm pd} A$ of $A\in\mathcal{C}$ is defined inductively.
 If $A\in\mathcal{P}(\xi)$, then define $\xi$-${\rm pd} A=0$.
Next if $\xi$-${\rm pd} A>0$, define $\xi$-${\rm pd} A\leqslant n$ if there exists an $\mathbb{E}$-triangle
 $K\to P\to A\dashrightarrow$  in $\xi$ with $P\in \mathcal{P}(\xi)$ and $\xi$-${\rm pd} K\leqslant n-1$.
Finally we define $\xi$-${\rm pd} A=n$ if $\xi$-${\rm pd} A\leqslant n$ and $\xi$-${\rm pd} A\nleq n-1$. Of course we set $\xi$-${\rm pd} A=\infty$, if $\xi$-${\rm pd} A\neq n$ for all $n\geqslant 0$.

Dually we can define the {\it $\xi$-injective dimension}  $\xi$-${\rm id} A$ of an object $A\in\mathcal{C}$.


\begin{definition} {\rm An unbounded complex $\mathbf{X}$ is called {\it $\xi$-exact} complex  if $\mathbf{X}$ is a diagram $$\xymatrix@C=2em{\cdots\ar[r]&X_1\ar[r]^{d_1}&X_0\ar[r]^{d_0}&X_{-1}\ar[r]&\cdots}$$ in $\mathcal{C}$ such that for each integer $n$, there exists an $\mathbb{E}$-triangle $\xymatrix@C=2em{K_{n+1}\ar[r]^{g_n}&X_n\ar[r]^{f_n}&K_n\ar@{-->}[r]^{\delta_n}&}$ in $\xi$ and $d_n=g_{n-1}f_n$.}
\end{definition}

\begin{definition} {\rm Let $\mathcal{W}$ be a class of objects in $\mathcal{C}$. An $\mathbb{E}$-triangle $\xymatrix@C=2em{A\ar[r]& B\ar[r]& C\ar@{-->}[r]& }$ in $\xi$ is called to be
{\it $\mathcal{C}(-,\mathcal{W})$-exact} (respectively
{\it $\mathcal{C}(\mathcal{W},-)$-exact}) if for any $W\in\mathcal{W}$, the induced sequence of abelian group $\xymatrix@C=2em{0\ar[r]&\mathcal{C}(C,W)\ar[r]&\mathcal{C}(B,W)\ar[r]&\mathcal{C}(A,W)\ar[r]& 0}$ (respectively \\ $\xymatrix@C=2em{0\ar[r]&\mathcal{C}(W,A)\ar[r]&\mathcal{C}(W,B)\ar[r]&\mathcal{C}(W,C)\ar[r]&0}$) is exact in ${\rm Ab}$}.
\end{definition}

\begin{definition} {\rm Let $\mathcal{W}$ be a class of objects in $\mathcal{C}$. A complex $\mathbf{X}$ is called {\it $\mathcal{C}(-,\mathcal{W})$-exact} (respectively
{\it $\mathcal{C}(\mathcal{W},-)$-exact}) if it is a $\xi$-exact complex
$$\xymatrix@C=2em{\cdots\ar[r]&X_1\ar[r]^{d_1}&X_0\ar[r]^{d_0}&X_{-1}\ar[r]&\cdots}$$ in $\mathcal{C}$ such that  { there exists a  $\mathcal{C}(-,\mathcal{W})$-exact (respectively
 $\mathcal{C}(\mathcal{W},-)$-exact) $\mathbb{E}$-triangle $$\xymatrix@C=2em{K_{n+1}\ar[r]^{g_n}&X_n\ar[r]^{f_n}&K_n\ar@{-->}[r]^{\delta_n}&}$$ in $\xi$  and $d_n=g_{n-1}f_n$ for each integer $n$}.

 A $\xi$-exact complex $\mathbf{X}$ is called {\it complete $\mathcal{P}(\xi)$-exact} (respectively {\it complete $\mathcal{I}(\xi)$-exact}) if it is $\mathcal{C}(-,\mathcal{P}(\xi))$-exact (respectively
 $\mathcal{C}(\mathcal{I}(\xi),-)$-exact).}
\end{definition}

\begin{definition}  {\rm A  {\it complete $\xi$-projective resolution}  is a complete $\mathcal{P}(\xi)$-exact complex\\ $$\xymatrix@C=2em{\mathbf{P}:\cdots\ar[r]&P_1\ar[r]^{d_1}&P_0\ar[r]^{d_0}&P_{-1}\ar[r]&\cdots}$$ in $\mathcal{C}$ such that $P_n$ is $\xi$-projective for each integer $n$. Dually,   a  {\it complete $\xi$-injective coresolution}  is a complete $\mathcal{I}(\xi)$-exact complex $$\xymatrix@C=2em{\mathbf{I}:\cdots\ar[r]&I_1\ar[r]^{d_1}&I_0\ar[r]^{d_0}&I_{-1}\ar[r]&\cdots}$$ in $\mathcal{C}$ such that $I_n$ is $\xi$-injective for each integer $n$.}
\end{definition}

\begin{definition} {\rm  Let $\mathbf{P}$ be a complete $\xi$-projective resolution in $\mathcal{C}$. So for each integer $n$, there exists a $\mathcal{C}(-, \mathcal{P}(\xi))$-exact $\mathbb{E}$-triangle $\xymatrix@C=2em{K_{n+1}\ar[r]^{g_n}&P_n\ar[r]^{f_n}&K_n\ar@{-->}[r]^{\delta_n}&}$ in $\xi$. The objects $K_n$ are called {\it $\xi$-$\mathcal{G}$projective} for each integer $n$. Dually if  $\mathbf{I}$ is a complete $\xi$-injective  coresolution in $\mathcal{C}$, there exists a  $\mathcal{C}(\mathcal{I}(\xi), -)$-exact $\mathbb{E}$-triangle $\xymatrix@C=2em{K_{n+1}\ar[r]^{g_n}&I_n\ar[r]^{f_n}&K_n\ar@{-->}[r]^{\delta_n}&}$ in $\xi$ for each integer $n$. The objects $K_n$ are called {\it $\xi$-$\mathcal{G}$injective} for each integer $n$.}
{ We denote  by $\mathcal{GP}(\xi)$ the class of $\xi$-$\mathcal{G}$projective objects in $\mathcal{C}$.}
\end{definition}

\begin{Ex} \emph{(1)} Assume that $(\mathcal{C}, \mathbb{E}, \mathfrak{s})$ is the category of left $R$-modules. If $\xi$ is the class of all $\mathbb{E}$-triangles, then $\xi$-$\mathcal{G}$projective objects defined
in here coincides with the earlier one given by Enochs and Jenda in \cite{EJ1}.

\emph{(2)} If $\mathcal{C}$ is a triangulated category and $\xi$ is { a proper class  of triangles} which is closed under suspension \emph{(see \cite[Section 2.2]{Bel1})}, then $\xi$-$\mathcal{G}$projective objects defined
in here coincides with the earlier one given by Asadollahi and Salarian in \cite{AS1}.
\end{Ex}

\subsection{Some fundamental properties of $\xi$-$\mathcal{G}$projective objects} Throughout this section, we always assume that the extriangulated category $(\mathcal{C}, \mathbb{E}, \mathfrak{s})$ has enough $\xi$-projectives and enough $\xi$-injectives. The following observation is useful in this section.



\begin{lem}\label{lem5}
\emph{(1)} Let $\mathcal{W}$ be a subcategory of $\mathcal{C}$,  $$\xymatrix@C=2em{A_1\ar[r]^{x_1}&B_1\ar[r]^{y_1}&C\ar@{-->}[r]^{\delta_1}&}~and~ \xymatrix@C=2em{A_2\ar[r]^{x_2}&B_2\ar[r]^{y_2}&C\ar@{-->}[r]^{\delta_2}&}$$ be $\mathbb{E}$-triangles in $\xi$. Consider the following commutative diagram
in $\mathcal{C}$
$$\xymatrix{
    & A_2\ar[d]_{m_2} \ar@{=}[r] & A_2 \ar[d]^{x_2} \\
  A_1 \ar@{=}[d] \ar[r]^{m_1} & M \ar[d]_{e_2} \ar[r]^{e_1} & B_2\ar[d]^{y_2} \\
  A_1 \ar[r]^{x_1} & B_1\ar[r]^{y_1} & C   }
  $$
  which satisfies $\mathfrak{s}(y^*_2\delta_1)=\xymatrix@C=2em{[A_1\ar[r]^{m_1}&M\ar[r]^{e_1}&B_2]}$ and $\mathfrak{s}(y^*_1\delta_2)=\xymatrix@C=2em{[A_2\ar[r]^{m_2}&M\ar[r]^{e_2}&B_1].}$

  If  $\xymatrix@C=2em{A_2\ar[r]^{x_2}&B_2\ar[r]^{y_2}&C\ar@{-->}[r]^{\delta_2}&}$ and
  $\xymatrix@C=2em{A_1\ar[r]^{m_1}&M\ar[r]^{e_1}&B_2\ar@{-->}[r]^{y_2^*\delta_1}&}$ are $\mathcal{C}(-, \mathcal{W})$-exact, then

  $\xymatrix@C=2em{A_1\ar[r]^{x_1}&B_1\ar[r]^{y_1}&C\ar@{-->}[r]^{\delta_1}&}$ is  $\mathcal{C}(-, \mathcal{W})$-exact.

 \emph{(2)} If $\xymatrix@C=2em{A\ar[r]^{x}&B\ar[r]^{y}&C\ar@{-->}[r]^{\delta}&}$  is an $\mathbb{E}$-triangle in $\xi$ with $C\in\mathcal{GP}(\xi)$, then it is  $\mathcal{C}(-,\mathcal{P}(\xi))$-exact.

 \emph{(3)} If $\xymatrix@C=2em{A\ar[r]^{x}&B\ar[r]^{y}&C\ar@{-->}[r]^{\delta}&}$  is an $\mathbb{E}$-triangle in $\xi$ where $C$ is a direct summand of $\xi$-$\mathcal{G}$projective, then it is $\mathcal{C}(-,\mathcal{P}(\xi))$-exact.
  \end{lem}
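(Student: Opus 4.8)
The plan is to prove the three parts in order: part (1) is the only genuine computation, and parts (2) and (3) are formal reductions to it. For part (1) I would fix an arbitrary $W\in\mathcal{W}$, apply the contravariant functor $\mathcal{C}(-,W)$ to the entire $3\times 3$ array, and run a diagram chase, using at each $\mathbb{E}$-triangle the long exact sequences of \cite[Corollary 3.12]{NP}. Exactness of $\mathcal{C}(C,W)\xrightarrow{y_1^{\ast}}\mathcal{C}(B_1,W)\xrightarrow{x_1^{\ast}}\mathcal{C}(A_1,W)$ at the middle term is immediate from the long exact sequence of the bottom row, so only injectivity of $y_1^{\ast}$ and surjectivity of $x_1^{\ast}$ remain. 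For injectivity: if $gy_1=0$, then $gy_2e_1=gy_1e_2=0$ (from $y_1e_2=y_2e_1$), hence $gy_2=0$ since $e_1^{\ast}$ is injective (the middle row is $\mathcal{C}(-,\mathcal{W})$-exact), and then $g=0$ since $y_2^{\ast}$ is injective (the right column is $\mathcal{C}(-,\mathcal{W})$-exact).

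The surjectivity of $x_1^{\ast}$ is the step I expect to carry the weight. Given $a\colon A_1\to W$, surjectivity of $m_1^{\ast}$ (middle row) yields $\mu\colon M\to W$ with $\mu m_1=a$; the point is to correct $\mu$ so that it factors through $e_2$. Surjectivity of $x_2^{\ast}$ (right column) gives $\phi\colon B_2\to W$ with $\mu m_2=\phi x_2$, and I set $\mu'=\mu-\phi e_1$. Using $x_2=e_1m_2$ one gets $\mu'm_2=0$, so exactness of the middle column at $\mathcal{C}(M,W)$ produces $\nu\colon B_1\to W$ with $\nu e_2=\mu'$; using $e_1m_1=0$ one has $\mu'm_1=a$, and finally $e_2m_1=x_1$ gives $a=\nu x_1=x_1^{\ast}(\nu)$. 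The only care needed is the bookkeeping of the commutativity relations $y_1e_2=y_2e_1$, $x_2=e_1m_2$, $x_1=e_2m_1$ and of the vanishing of consecutive composites, all read off from the diagram.

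For part (2), I would feed part (1) with a well-chosen second $\mathbb{E}$-triangle over $C$. Since $C\in\mathcal{GP}(\xi)$ there is a $\mathcal{C}(-,\mathcal{P}(\xi))$-exact $\mathbb{E}$-triangle $A'\to P\to C\dashrightarrow$ in $\xi$ with $P\in\mathcal{P}(\xi)$; I take this as the right column and the given $\delta$ as the bottom row in the diagram of Lemma \ref{lem1}(1). The middle row $A\to M\to P$ realizes $y_2^{\ast}\delta$, lies in $\xi$ because $\xi$ is closed under base change, and has projective right term, so it splits: $\xi$-projectivity of $P$ lets $\mathrm{id}_P$ lift through the deflation $M\to P$, giving a section. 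Every split $\mathbb{E}$-triangle is $\mathcal{C}(-,\mathcal{W})$-exact for any $\mathcal{W}$ (apply $\mathcal{C}(-,W)$ to $A\to A\oplus P\to P$), so the middle row is $\mathcal{C}(-,\mathcal{P}(\xi))$-exact, and part (1) then gives the conclusion.

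For part (3), write $G=C\oplus C'\in\mathcal{GP}(\xi)$ and form the coproduct of the given $\mathbb{E}$-triangle with the split triangle $0\to C'\xrightarrow{1}C'\dashrightarrow$, namely $A\xrightarrow{\left[\begin{smallmatrix}x\\0\end{smallmatrix}\right]}B\oplus C'\xrightarrow{y\oplus 1}C\oplus C'\dashrightarrow$, which lies in $\xi$ since $\xi$ is closed under finite coproducts. By part (2) this triangle is $\mathcal{C}(-,\mathcal{P}(\xi))$-exact. For each $W\in\mathcal{P}(\xi)$ the resulting short exact sequence splits, compatibly with the summands, as the direct sum of $0\to\mathcal{C}(C,W)\to\mathcal{C}(B,W)\to\mathcal{C}(A,W)\to 0$ and the trivially exact $0\to\mathcal{C}(C',W)\xrightarrow{1}\mathcal{C}(C',W)\to 0$; since a direct sum of complexes is exact precisely when each summand is, the first sequence is exact, which is exactly the assertion. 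Thus the whole lemma rests on the single chase in part (1), with the correction argument for $x_1^{\ast}$ as its only delicate point.
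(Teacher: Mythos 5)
Your proof is correct. In parts (1) and (2) you follow essentially the paper's route: the authors also work on the $3\times 3$ diagram of Lemma \ref{lem1}(1), the only mechanical difference being in (1), where they reduce everything to monicity of $\mathcal{C}(y_1,W)$ \emph{and} of $\mathbb{E}(y_1,W)$ (the latter being equivalent, via the long exact sequence of \cite[Corollary 3.12]{NP}, to surjectivity of $\mathcal{C}(x_1,W)$), obtained by factoring $\mathcal{C}(e_2,W)\circ\mathcal{C}(y_1,W)=\mathcal{C}(e_1,W)\circ\mathcal{C}(y_2,W)$ and its $\mathbb{E}$-analogue through two monomorphisms; you instead prove surjectivity of $x_1^{\ast}$ by an explicit lift-and-correct chase using $x_2=e_1m_2$, $x_1=e_2m_1$ and $e_1m_1=0$. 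Both are sound; the paper's version avoids the correction term at the cost of invoking the $\mathbb{E}$-level portion of the long exact sequence. In (2) your splitting of the middle row by lifting $\mathrm{id}_P$ (legitimate, since that row lies in $\xi$ by base change) replaces the paper's argument that the deflation $P\rightarrow C$ factors through $y$, forcing $g^{\ast}\delta=0$; these are equivalent. Part (3) is where you genuinely diverge: the paper builds yet another $3\times 3$ diagram with right-hand column the split triangle $C'\rightarrow G\rightarrow C$ and reduces to (1) and (2), whereas you take the coproduct of the given $\mathbb{E}$-triangle with $0\rightarrow C'\xrightarrow{1}C'$, note the result is in $\xi$ with cone $G\in\mathcal{GP}(\xi)$, apply (2), and peel off the desired short exact sequence as a direct summand of complexes. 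Your version is shorter, avoids a second appeal to (1), and is a perfectly valid (arguably cleaner) substitute.
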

\begin{proof} (1) Since $\xymatrix@C=2em{A_2\ar[r]^{x_2}&B_2\ar[r]^{y_2}&C\ar@{-->}[r]^{\delta_2}&}$ is an $\mathbb{E}$-triangle, there exists an exact sequence of abelian groups $\xymatrix{\mathcal{C}(C, W)\ar[r]^{\mathcal{C}(y_2, W)}&\mathcal{C}(B_2, W)\ar[r]^{\mathcal{C}(x_2, W)}&\mathcal{C}(A_2, W)\ar[r]^{(\delta_{2})_W^\sharp}&\mathbb{E}(C, W)\ar[r]^{\mathbb{E}(y_2, W)}&\mathbb{E}(B_2, W)}$ for any $W\in\mathcal{W}$ by \cite[Proposition 3.3]{NP}, which implies that the morphisms $\mathcal{C}(y_2, W)$
and $\mathbb{E}(y_2, W)$ are monic as $\xymatrix@C=2em{A_2\ar[r]^{x_2}&B_2\ar[r]^{y_2}&C\ar@{-->}[r]^{\delta_2}&}$ is $\mathcal{C}(-,\mathcal{W})$-exact.
Similarly, we can show that $\mathcal{C}(e_1, W)$
and $\mathbb{E}(e_1, W)$ are monic for any $W\in\mathcal{W}$. It follows from \cite[Proposition 3.3]{NP}, there is a commutative diagram of abelian groups for any $W\in\mathcal{W}$.
$$\xymatrix{\mathcal{C}(C, W)\ar[r]^{\mathcal{C}(y_1, W)}\ar[d]^{\mathcal{C}(y_2, W)}&\mathcal{C}(B_1, W)\ar[r]^{\mathcal{C}(x_1, W)}\ar[d]^{\mathcal{C}(e_2, W)}&\mathcal{C}(A_1, W)\ar[r]^{(\delta_1)^\sharp_W}\ar@{=}[d]&\mathbb{E}(C, W)\ar[r]^{\mathbb{E}(y_1, W)}\ar[d]^{\mathbb{E}(y_2, W)}&\mathbb{E}(B_1, W)\ar[d]^{\mathbb{E}(e_2, W)}\\
\mathcal{C}(B_2, W)\ar[r]^{\mathcal{C}(e_1, W)}&\mathcal{C}(M, W)\ar[r]^{\mathcal{C}(m_1, W)}&\mathcal{C}(A_1, W)\ar[r]^{(y_2^*\delta_1)^\sharp_W}&\mathbb{E}(B_2, W)\ar[r]^{\mathbb{E}(e_1, W)}&\mathbb{E}(M, W)}$$
It is easy to see that $\mathcal{C}(y_1, W)$
and $\mathbb{E}(y_1, W)$ are monic, which implies   that $$\xymatrix@C=2em{A_1\ar[r]^{x_1}&B_1\ar[r]^{y_1}&C\ar@{-->}[r]^{\delta_1}&}$$ is $\mathcal{C}(-,\mathcal{W})$-exact.

(2) If $\xymatrix@C=2em{A\ar[r]^{x}&B\ar[r]^{y}&C\ar@{-->}[r]^{\delta}&}$  is an $\mathbb{E}$-triangle in $\xi$ with $C\in\mathcal{GP}(\xi)$, then there exists a $\mathcal{C}(-,\mathcal{P}(\xi))$-exact $\mathbb{E}$-triangle $\xymatrix@C=2em{G\ar[r]^{f}&P\ar[r]^{g}&C\ar@{-->}[r]^{\rho}&}$ in $\xi$ with $P\in\mathcal{P}(\xi)$ and $G\in\mathcal{GP}(\xi)$. Hence there exists a commutative diagram
$$\xymatrix{
    & G\ar[d]_{f_1} \ar@{=}[r] & G\ar[d]^{f} \\
  A \ar@{=}[d] \ar[r]^{x_1} & M \ar[d]_{g_1} \ar[r]^{y_1} & P\ar[d]^{g} \\
  A \ar[r]^{x} & B\ar[r]^{y} & C   }
  $$
  which satisfies $\mathfrak{s}(g^*\delta)=\xymatrix@C=2em{[A\ar[r]^{x_1}&M\ar[r]^{y_1}&P]}$ and $\mathfrak{s}(y^*\rho)=\xymatrix@C=2em{[G\ar[r]^{f_1}&M\ar[r]^{g_1}&B].}$
{ Since $P\in\mathcal{P}(\xi)$, $g$ factors through $y$.
We know that the $\mathbb{E}$-triangle $\xymatrix{A \ar[r]^{x_1} & M  \ar[r]^{y_1} & P\ar@{-->}[r]^{g^*\delta}&}$ is split by  \cite[Corollary 3.5]{NP},} hence it is a
$\mathcal{C}(-,\mathcal{P}(\xi))$-exact $\mathbb{E}$-triangle in $\xi$. So $$\xymatrix@C=2em{A\ar[r]^{x}&B\ar[r]^{y}&C\ar@{-->}[r]^{\delta}&}$$  is
  $\mathcal{C}(-,\mathcal{P}(\xi))$-exact by (1).

   (3) Suppose that $G=C\oplus C'$ with  $G\in\mathcal{GP}(\xi)$, then we have the following commutative diagram
  $$\xymatrix{
    & C'\ar[d]_{f_1} \ar@{=}[r] & C'\ar[d]^{{\tiny\begin{bmatrix}0\\1\end{bmatrix}}} \\
  A \ar@{=}[d] \ar[r]^{x_1} & M \ar[d]_{g_1} \ar[r]^{y_1} & G\ar[d]^{\tiny\begin{bmatrix}1&0\end{bmatrix}} \\
  A \ar[r]^{x} & B\ar[r]^{y} & C   }
  $$
made of $\mathbb{E}$-triangles in $\xi$, which satisfies  $\mathfrak{s}({\tiny\begin{bmatrix}1&0\end{bmatrix}}^*\delta)=\xymatrix@C=2em{[A\ar[r]^{x_1}&M\ar[r]^{y_1}&G].}$ Note that the second horizontal  is $\mathcal{C}(-,\mathcal{P}(\xi))$-exact by (2) and the third vertical is $\mathcal{C}(-,\mathcal{P}(\xi))$-exact, then so is the third horizontal by (1).
\end{proof}

In addition, we assume the following condition for { the rest of the section} (see \cite[Condition 5.8]{NP}).

\begin{cond} \label{cond:4.11} \emph{({\rm Condition (WIC)})}  Consider the following conditions.

\emph{(1)} Let $f\in\mathcal{C}(A, B), g\in\mathcal{C}(B, C)$ be any composable pair of morphisms. If $gf$ is an inflation, then so is $f$.

\emph{(2)} Let $f\in\mathcal{C}(A, B), g\in\mathcal{C}(B, C)$ be any composable pair of morphisms. If $gf$ is a deflation, then so is $g$.
\end{cond}

\begin{Ex}\label{Ex:4.12}

\emph{(1)} If $\mathcal{C}$ is an exact category, then Condition \emph{(WIC)} is equivalent to $\mathcal{C}$ is
weakly idempotent complete \emph{(see \cite[Proposition 7.6]{B"u})}.

\emph{(2)} If $\mathcal{C}$ is a triangulated category, then Condition \emph{(WIC)} is automatically satisfied.
\end{Ex}

   \begin{prop}\label{pro1}
   \emph{(1)} Let $f\in\mathcal{C}(A, B), g\in\mathcal{C}(B, C)$ be any composable pair of morphisms. If $gf$ is a $\xi$-inflation, then so is $f$.

   \emph{(2)} Dual of \emph{(1)}. \end{prop}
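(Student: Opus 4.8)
The plan is to prove (1) in full and obtain (2) by the formally dual argument. For (1), assume that $gf$ is a $\xi$-inflation; the goal is to exhibit an $\mathbb{E}$-triangle lying in $\xi$ whose inflation is $f$. The whole difficulty is that Condition (WIC) is a statement about the \emph{ambient} structure $(\mathcal{C},\mathbb{E},\mathfrak{s})$, so it only produces an $\mathbb{E}$-triangle, not one in $\xi$; the real work is promoting that triangle into $\xi$.

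First I would pass to the ambient structure. Being a $\xi$-inflation, $gf$ is in particular an inflation of $(\mathcal{C},\mathbb{E},\mathfrak{s})$, so Condition (WIC)(1) supplies an $\mathbb{E}$-triangle $A\xrightarrow{\,f\,}B\xrightarrow{\,y\,}E\dashrightarrow$ realizing some $\delta\in\mathbb{E}(E,A)$, and it remains only to show this $\mathbb{E}$-triangle is in $\xi$. Write the given $\xi$-inflation as an $\mathbb{E}$-triangle $A\xrightarrow{\,gf\,}C\xrightarrow{\,w\,}W\dashrightarrow$ in $\xi$ realizing $\sigma\in\mathbb{E}(W,A)$. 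Now I would apply Lemma \ref{lem1}(2) to this pair of $\mathbb{E}$-triangles, both of which issue from the common object $A$. This produces a commutative diagram with a new object $M$ and, among its rows, the $\mathbb{E}$-triangle $C\xrightarrow{\,m_1\,}M\xrightarrow{\,e_1\,}E\dashrightarrow$ realizing $(gf)_*\delta$.

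The crucial observation is that this extension vanishes. Since $f$ is the inflation of the $\mathbb{E}$-triangle realizing $\delta$, one has $f_*\delta=0$ by the dual of \cite[Lemma 3.2]{NP}, whence $(gf)_*\delta=g_*f_*\delta=0$. Therefore $C\xrightarrow{\,m_1\,}M\xrightarrow{\,e_1\,}E\dashrightarrow$ is split, and so it lies in $\Delta_0\subseteq\xi$. Finally I would invoke saturation: in the notation of Proposition \ref{thm}, taking $x_1=f$, $x_2=gf$, $\delta_1=\delta$, $\delta_2=\sigma$ (so that $x_{2*}\delta_1=(gf)_*\delta$), both of the $\mathbb{E}$-triangles $A\xrightarrow{\,gf\,}C\xrightarrow{\,w\,}W\dashrightarrow$ and $C\xrightarrow{\,m_1\,}M\xrightarrow{\,e_1\,}E\dashrightarrow$ lie in $\xi$. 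Since a proper class is saturated (Definition \ref{def:proper class}(3)), Proposition \ref{thm} forces $A\xrightarrow{\,f\,}B\xrightarrow{\,y\,}E\dashrightarrow$ into $\xi$, so $f$ is a $\xi$-inflation.

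For (2) I would run the dual argument: Condition (WIC)(2) realizes $g$ as a deflation, giving an $\mathbb{E}$-triangle with deflation $g$ and extension $\tau$; applying Lemma \ref{lem1}(1) to the two deflations onto the common target and using the dual vanishing $g^*\tau=0$ splits off the auxiliary $\mathbb{E}$-triangle into $\Delta_0\subseteq\xi$, and the saturation axiom in its original form (via Lemma \ref{lem1}(1)) then places the triangle with deflation $g$ in $\xi$. I expect the only genuine content to be the vanishing of $(gf)_*\delta$ (respectively $g^*\tau$), which is what makes the intermediate $\mathbb{E}$-triangle split; once that is secured, the conclusion is forced by saturation together with $\Delta_0\subseteq\xi$, and the remaining steps are routine bookkeeping of the data from Lemma \ref{lem1}.
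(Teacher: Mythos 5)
Your argument is correct, but it takes a genuinely different and noticeably heavier route than the paper's. The paper disposes of (1) in essentially one step: with $\delta$ the extension realized by $A\xrightarrow{f}B$ (which exists by Condition (WIC)) and $\delta'$ the extension in $\xi$ realized by $A\xrightarrow{gf}B'$, the commutative square $gf\circ 1_A=g\circ f$ together with axiom (ET3) produces $c$ with $\delta=c^{*}\delta'$, and closure of $\xi$ under base change finishes immediately. You instead route the argument through Lemma \ref{lem1}(2), the vanishing $(gf)_{*}\delta=g_{*}(f_{*}\delta)=0$, the inclusion $\Delta_0\subseteq\xi$, and the reformulated saturation axiom of Proposition \ref{thm}. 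Each step checks out: $f_{*}\delta=0$ does follow from the long exact sequence (the dual of \cite[Lemma 3.2]{NP}), the triangle realizing $x_{2*}\delta_1=(gf)_{*}\delta=0$ is split and hence lies in $\Delta_0\subseteq\xi$, and your identifications $x_1=f$, $x_2=gf$, $\delta_1=\delta$, $\delta_2=\sigma$ put you exactly in the situation of Proposition \ref{thm}, whose ``only if'' direction then forces $\delta$ into $\xi$; the dual argument for (2) works the same way with $g^{*}\tau=0$ giving $(gf)^{*}\tau=0$ and the saturation axiom in its original Lemma \ref{lem1}(1) form. What your approach buys is a clean isolation of where each axiom is used --- the auxiliary triangle splits for purely formal reasons, and saturation does the promotion into $\xi$. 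What it costs is an appeal to Proposition \ref{thm}, whose proof is itself substantial and already uses base change internally, so your argument is not more economical in hypotheses, only differently organized; the (ET3)-plus-base-change route is the shorter path.
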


   \begin{proof} (1) If $gf$ is a $\xi$-inflation, then  $f$ is an inflation by Condition (WIC). Assume that \\ $\xymatrix{A\ar[r]^f&B\ar[r]^{f'}&C\ar@{-->}[r]^{\delta}&}$ is an $\mathbb{E}$-triangle and $\xymatrix{A\ar[r]^{gf}&B'\ar[r]^{h}&C'\ar@{-->}[r]^{\delta'}&}$ is an $\mathbb{E}$-triangle in $\xi$. Applying (ET3) to
     $$\xymatrix{A\ar[r]^{f}\ar@{=}[d]&B\ar[r]^{f'}\ar[d]^{g}&C\ar@{-->}[r]^{\delta}&\\
   A\ar[r]^{gf}&B'\ar[r]^{h}&C'\ar@{-->}[r]^{\delta'}&}$$
   we obtain a morphism $c\in \mathcal{C}(C, C')$ satisfying $\delta=c^*\delta$, hence $\xymatrix@C=2em{A\ar[r]^{f}&B\ar[r]^{f'}&C\ar@{-->}[r]^{\delta}&}$ is  an $\mathbb{E}$-triangle in $\xi$.
   \end{proof}
\begin{lem}\label{lem3}  Let
$$
\xymatrix{
  K \ar[d]_{k}  & K' \ar[d]_{k'}  & K'' \ar[d]_{k''}  & \\
  A\ar@{}[dr]|{\circlearrowleft} \ar[d]_{a} \ar[r]^{x} & B \ar@{}[dr]|{\circlearrowleft}\ar[d]_{b} \ar[r]^{y} & C \ar[d]_{c'} \ar@{-->}[r]^{\delta} &  \\
  A' \ar@{-->}[d]_{\kappa} \ar[r]_{x'} & B' \ar@{-->}[d]_{\kappa'} \ar[r]^{y'} & C' \ar@{-->}[d]_{\kappa''} \ar@{-->}[r]^{\delta'} &  \\
  &  & &  }
  $$
 be a diagram  of $\mathbb{E}$-triangles.  If $y$ is epic, then there is an $\mathbb{E}$-triangle $\xymatrix{K\ar[r]^m&K'\ar[r]^{n'}&K''\ar@{-->}[r]^{\delta''}&}$
 which { makes} the following diagram commutative
 $$
\xymatrix{
  K\ar@{}[dr]|{\circlearrowleft} \ar[d]_{k}\ar[r]^m  & K' \ar@{}[dr]|{\circlearrowleft}\ar[d]_{k'}\ar[r]^{n'}  & K'' \ar[d]_{k''}\ar@{-->}[r]^{\delta''}  & \\
  A\ar@{}[dr]|{\circlearrowleft} \ar[d]_{a} \ar[r]^{x} & B \ar@{}[dr]|{\circlearrowleft}\ar[d]_{b} \ar[r]^{y} & C \ar[d]_{c'} \ar@{-->}[r]^{\delta} &  \\
  A' \ar@{-->}[d]_{\kappa} \ar[r]^{x'} & B' \ar@{-->}[d]_{\kappa'} \ar[r]^{y'} & C' \ar@{-->}[d]_{\kappa''} \ar@{-->}[r]^{\delta'} &  \\
  &  & &  }
  $$
{ where} $(k, k', k''), (m, x, x')$ and $(n', y, y')$ are morphisms of $\mathbb{E}$-triangles.
\end{lem}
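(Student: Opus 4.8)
The plan is to build the top row one map at a time from the long exact sequences, and only then recognise it as an $\mathbb{E}$-triangle. Since the bottom two rows form a morphism of $\mathbb{E}$-triangles we have $bx=x'a$ and $c'y=y'b$, while the three columns give $ak=0$ and $bk'=0$ (consecutive maps of an $\mathbb{E}$-triangle compose to zero). Hence $b(xk)=x'(ak)=0$, so applying $\mathcal{C}(K,-)$ to the $\mathbb{E}$-triangle $K'\xrightarrow{k'}B\xrightarrow{b}B'$ (\cite[Corollary 3.12]{NP}) produces $m\colon K\to K'$ with $k'm=xk$; dually $c'(yk')=y'(bk')=0$, so applying $\mathcal{C}(K',-)$ to $K''\xrightarrow{k''}C\xrightarrow{c'}C'$ produces $n'\colon K'\to K''$ with $k''n'=yk'$. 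These two identities are exactly the commutativity of the two upper squares, so $(k,k',k'')$, $(m,x,x')$ and $(n',y,y')$ already have the correct shape; what remains is to manufacture the extension $\delta''$, to show $K\xrightarrow{m}K'\xrightarrow{n'}K''$ realizes it, and to check the three extension-compatibilities.

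Next I would show $m$ is an inflation: $xk=k'm$ is a composite of the inflations $x$ and $k$, hence an inflation (inflations are closed under composition), and Condition (WIC)(1) (cf. Proposition \ref{pro1}) forces $m$ itself to be an inflation, giving an $\mathbb{E}$-triangle $K\xrightarrow{m}K'\xrightarrow{p}L$ realizing some $\delta''$. To identify the cokernel term $L$ with $K''$, I would apply (ET4) to the composable inflations $K\xrightarrow{k}A\xrightarrow{x}B$; this yields an object $E$, an $\mathbb{E}$-triangle $K\xrightarrow{xk}B\xrightarrow{h'}E$, and a right column $A'\xrightarrow{d}E\xrightarrow{e}C$ realizing $a_*\delta$, with $eh'=y$. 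Because $(a,b,c')$ is a morphism of $\mathbb{E}$-triangles, $a_*\delta=(c')^*\delta'$, so $A'\xrightarrow{d}E\xrightarrow{e}C$ realizes $(c')^*\delta'$; comparing with the base change of $\delta'$ along $c'$ and using uniqueness of realizations (\cite[Corollary 3.6]{NP}) identifies $E$ with the pullback of $y'$ along $c'$, whose second kernel is precisely $K''$, fitting into an $\mathbb{E}$-triangle $K''\xrightarrow{j}E\xrightarrow{u}B'$ with $ej=k''$. This is where the hypothesis enters decisively: since $eh'=y$ is epic, $e$ is epic, and this is what pins the comparison down and forces the cokernel of $m$ to be $K''$ rather than a larger object.

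Finally, after adjusting $h'$ so that $uh'=b$, I would apply Lemma \ref{lemb}(1) to the two $\mathbb{E}$-triangles ending at $B'$, namely $K'\xrightarrow{k'}B\xrightarrow{b}B'$ and $K''\xrightarrow{j}E\xrightarrow{u}B'$, together with the lift $h'$ of $b$; this recovers $n'$ as part of a morphism of $\mathbb{E}$-triangles and an auxiliary $\mathbb{E}$-triangle $K'\to K''\oplus B\to E$, through which Lemma \ref{lemc} matches the cokernel $L$ of $m$ with $K''$ and $p$ with $n'$, producing the desired $\mathbb{E}$-triangle $K\xrightarrow{m}K'\xrightarrow{n'}K''$ realizing $\delta''$. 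The three compatibilities $k_*\delta''=(k'')^*\delta$, $m_*\kappa=(x')^*\kappa'$ and $n'_*\kappa'=(y')^*\kappa''$ then fall out of the compatibility relations (ii) and (iii) of the two applications of (ET4)/Lemma \ref{lemc} together with a Yoneda argument. I expect the main obstacle to be exactly this identification of the third vertex with $K''$: the naive route of proving $n'm=0$ and concluding directly fails, because inflations need not be monic and the long exact sequences do not begin with a zero, so one is forced to realise $K''$ as a genuine cokernel via the octahedral diagrams, which is precisely the point at which the epicness of $y$ cannot be dispensed with.
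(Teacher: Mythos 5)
Your construction of $m$ and $n'$ from the long exact sequences is correct, and the observation that $m$ is an inflation by (WIC) is fine, but there are two genuine gaps, and they sit exactly where the work of the lemma lies. First, you use $a_*\delta=(c')^*\delta'$ (``because $(a,b,c')$ is a morphism of $\mathbb{E}$-triangles'') as if it were given. The hypothesis only provides the commutativity $bx=x'a$ and $c'y=y'b$ of the two marked squares; the equality $a_*\delta=(c')^*\delta'$ is precisely what the epicness of $y$ is for: (ET3) yields some $c$ with $a_*\delta=c^*\delta'$ and $cy=y'b$, and then $cy=y'b=c'y$ forces $c=c'$ since $y$ is epic. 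Your proposal never performs this step and instead attributes the role of epicness to the later, vague claim that ``$e$ is epic $\ldots$ pins the comparison down'', which does not correspond to any concrete deduction --- once $a_*\delta=(c')^*\delta'$ is in hand, the identification of $E$ with the pullback needs no epicness at all.

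Second, the splicing step --- identifying the third vertex of the $\mathbb{E}$-triangle on $m$ with $K''$ --- is asserted rather than proved, and the sketch does not close. Replacing $h'$ by $h'-d't$ to force $uh'=b$ need not preserve the $\mathbb{E}$-triangle $K\xrightarrow{xk}B\xrightarrow{h'}E$ (one would need $(h'-d't)(xk)=0$, which fails in general). Even granting the adjustment, Lemma \ref{lemc} applied to $m$, $k'$ and $k'm=xk$ produces an $\mathbb{E}$-triangle $L\to E\xrightarrow{e_1}B'$ with $e_1h'=b$, while the pullback produces $K''\to E\xrightarrow{u}B'$; since deflations are not epic, knowing $e_1h'=uh'=b$ does not give $e_1=u$, so \cite[Corollary 3.6]{NP} cannot be invoked to conclude $L\cong K''$. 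Reconciling the two is essentially the content of \cite[Lemma 5.9]{NP}, which is what the paper's proof actually cites: that lemma supplies the whole top row ending in some $X$ together with a column $X\to C\xrightarrow{c}C'$, epicness of $y$ gives $c=c'$, and then (ET3)$^{\rm op}$ together with \cite[Corollary 3.6]{NP}, applied to the two $\mathbb{E}$-triangles $K''\to C\xrightarrow{c'}C'$ and $X\to C\xrightarrow{c}C'$, yields the isomorphism $X\cong K''$. To make your route work you would in effect have to reprove that lemma.
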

  \begin{proof} It follows from \cite[Lemma 5.9]{NP} that there exist $\mathbb{E}$-triangles $\xymatrix{K\ar[r]^m&K'\ar[r]^{n}&X\ar@{-->}[r]^{\nu}&}$
  and $\xymatrix{X\ar[r]^i&C\ar[r]^{c}&C'\ar@{-->}[r]^{\tau}&}$ which make the following diagram commutative
 $$ \xymatrix{
  K \ar@{}[dr]|{\circlearrowleft}\ar[d]_{k}\ar[r]^m  & K'\ar@{}[dr]|{\circlearrowleft} \ar[d]_{k'}\ar[r]^{n}  & X \ar[d]_{i}\ar@{-->}[r]^{\nu}  & \\
  A \ar@{}[dr]|{\circlearrowleft}\ar[d]_{a} \ar[r]^{x} & B\ar@{}[dr]|{\circlearrowleft} \ar[d]_{b} \ar[r]^{y} & C \ar[d]_{c} \ar@{-->}[r]^{\delta} &  \\
  A' \ar@{-->}[d]_{\kappa} \ar[r]^{x'} & B' \ar@{-->}[d]_{\kappa'} \ar[r]^{y'} & C' \ar@{-->}[d]_{\tau} \ar@{-->}[r]^{\delta'} &  \\
  &  & &  }
  $$
{ where} $(k, k', i), (a, b, c), (m, x, x')$ and $(n, y, y')$ are morphisms of $\mathbb{E}$-triangles.
Since $c'y=y'b=cy$ and $y$ is epic, $c'=c$. By (ET3)$^{op}$ and \cite[Corollary 3.6]{NP}, we obtain a morphism of $\mathbb{E}$-triangles
$$\xymatrix{K''\ar[r]^{k''}\ar[d]^u&C\ar@{=}[d]\ar[r]^{c'}&C'\ar@{=}[d]\ar@{-->}[r]^{\kappa''}&\\
X\ar[r]^i&C\ar[r]^{c}&C'\ar@{-->}[r]^{\tau}&\\
}$$
where $u$ is an isomorphism. It is clear that $u_*\kappa''=\tau$. Set $n'=u^{-1}n$, then $$\xymatrix{K\ar[r]^m&K'\ar[r]^{n'}&K''\ar@{-->}[r]^{\delta''}&}$$ is an
$\mathbb{E}$-triangle by \cite[Proposition 3.7]{NP} where $\delta''=u^*\nu$.
It is easy to check that $k''n'=yk'$. Moreover, one can show that $k_*\delta''=k_*u^*\nu=u^*k_*\nu=u^*i^*\delta=(iu)^*\delta=k^{''*}\delta$ and $n'_*\kappa'=u^{-1}_*n_*\kappa'=u^{-1}_*y^{'*}\tau=y^{'*}u^{-1}\tau=y^{'*}\kappa''$, which implies  that $(k, k', k'')$ and $(n', y, y')$ are morphisms of $\mathbb{E}$-triangles.
  \end{proof}

\begin{lem}\label{leme} Let $\xymatrix@C=2em{A\ar[r]^x& B\ar[r]^y& C\ar@{-->}[r]^\delta&}$ and $\xymatrix@C=2em{A'\ar[r]^{x'}& B'\ar[r]^{y'}& C'\ar@{-->}[r]^{\delta'}&}$ be $\mathbb{E}$-triangles. Then the following hold.

\emph{(1)} If $(a, c): \delta\rightarrow \delta'$ is a morphism of $\mathbb{E}$-triangles where $a, c$ are inflations, then there is an inflation $b: B\rightarrow B'$ which { makes} the following diagram commutative.
$$\xymatrix{A\ar[r]^x\ar[d]_a&B\ar[r]^y\ar[d]^b&C\ar[d]^c\ar@{-->}[r]^{\delta}&\\
A'\ar[r]^{x'}& B'\ar[r]^{y'}& C'\ar@{-->}[r]^{\delta'}&}$$

\emph{(2)} Dual of \emph{(1)}
\end{lem}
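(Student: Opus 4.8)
The plan is to realize the required inflation $b$ as a composite of two inflations, one obtained from a cobase change of $\delta$ along $a$ and the other from a base change of $\delta'$ along $c$, and then to glue the two pieces together using the defining identity $a_*\delta=c^*\delta'$ of the morphism $(a,c)$.

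First I would carry out the cobase change. Since $a$ is an inflation, choose an $\mathbb{E}$-triangle $A\xrightarrow{a}A'\xrightarrow{a'}A''$ with connecting extension $\alpha$, and apply Lemma \ref{lem1}(2) to this $\mathbb{E}$-triangle together with $A\xrightarrow{x}B\xrightarrow{y}C$ (extension $\delta$), which share the left-hand object $A$. This produces an object $P$ and two $\mathbb{E}$-triangles $A'\xrightarrow{p}P\xrightarrow{q}C$ (realizing $a_*\delta$) and $B\xrightarrow{\beta}P\xrightarrow{r}A''$ (realizing $x_*\alpha$), with the compatibilities $\beta x=pa$ and $q\beta=y$. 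In particular $\beta\colon B\to P$ is an inflation. Dually, I would perform the base change along $c$: since $c$ is an inflation, choose an $\mathbb{E}$-triangle $C\xrightarrow{c}C'\xrightarrow{c'}C''$ with connecting extension $\gamma$, and apply \emph{(ET4)}$^{\rm op}$ (equivalently the dual of Lemma \ref{lemc}(1)) to the two $\mathbb{E}$-triangles $A'\xrightarrow{x'}B'\xrightarrow{y'}C'$ and $C\xrightarrow{c}C'\xrightarrow{c'}C''$, which share the object $C'$. This yields an object $Q$ and $\mathbb{E}$-triangles $Q\xrightarrow{\pi}B'\xrightarrow{w}C''$ and $A'\xrightarrow{u}Q\xrightarrow{v}C$, with $\pi u=x'$ and $y'\pi=cv$. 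Here $\pi\colon Q\to B'$ is an inflation, and since $(1_{A'},\pi,c)$ is a morphism of $\mathbb{E}$-triangles from $A'\xrightarrow{u}Q\xrightarrow{v}C$ to $\delta'$, its underlying morphism of extensions $(1_{A'},c)$ forces the connecting extension of $A'\xrightarrow{u}Q\xrightarrow{v}C$ to be $(1_{A'})_*^{\phantom{1}}\rho=c^*\delta'$, i.e.\ $\rho=c^*\delta'$.

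Now both $A'\xrightarrow{p}P\xrightarrow{q}C$ and $A'\xrightarrow{u}Q\xrightarrow{v}C$ realize the \emph{same} $\mathbb{E}$-extension, because $a_*\delta=c^*\delta'$ by hypothesis; as $\mathfrak{s}$ assigns a well-defined equivalence class to each $\mathbb{E}$-extension, the two realizations are equivalent, so there is an isomorphism $\theta\colon P\to Q$ with $\theta p=u$ and $v\theta=q$. I would then set $b:=\pi\theta\beta$. As a composite of the inflation $\beta$, the isomorphism $\theta$, and the inflation $\pi$, the morphism $b$ is an inflation, since inflations are closed under composition (cf.\ \cite[Remark 2.16]{NP}). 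Finally a direct check gives $bx=\pi\theta\beta x=\pi\theta pa=\pi ua=x'a$ and $y'b=y'\pi\theta\beta=cv\theta\beta=cq\beta=cy$, so $(a,b,c)$ is the desired morphism of $\mathbb{E}$-triangles, and part (2) follows by the dual argument.

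The step I expect to be the main obstacle is the base change in the second half of the first paragraph: getting the orientation of \emph{(ET4)}$^{\rm op}$ exactly right so that $\pi$ genuinely appears as an inflation (the inflation into $B'$ with cokernel $C''$) and so that the connecting $\mathbb{E}$-triangle $A'\xrightarrow{u}Q\xrightarrow{v}C$ realizes precisely $c^*\delta'$ rather than some twist of it. Once these compatibilities are pinned down, the identification $P\cong Q$ via $\theta$ and the verification that $b=\pi\theta\beta$ completes the original square are routine bookkeeping with the relations supplied by Lemmas \ref{lem1} and \ref{lemc}.
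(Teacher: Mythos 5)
Your proof is correct and follows essentially the same route as the paper: the paper also factors $b$ as (inflation $B\to D$ from the cobase change of $\delta$ along $a$) $\circ$ (isomorphism $D\to D'$ coming from $a_*\delta=c^*\delta'$) $\circ$ (inflation $D'\to B'$ from the base change of $\delta'$ along $c$, obtained via (ET4)$^{\rm op}$), and concludes by closure of inflations under composition. The compatibilities you worried about are exactly those supplied by the dual of Lemma \ref{lemc}(1), so your argument goes through as written.
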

\begin{proof} (1) It follows from Lemma \ref{lem1}(1), and (ET4)$^{\rm op}$, there exist inflation morphisms $d, h, d' $ which make the following diagram commutative
$$\xymatrix{A\ar[r]^x\ar[d]_a&B\ar[r]^y\ar[d]^d&C\ar@{=}[d]\ar@{-->}[r]^{\delta}&\\
A'\ar[r]^f\ar@{=}[d]& D\ar[r]^g\ar[d]^h& C\ar@{=}[d]\ar@{-->}[r]^{a_*\delta}&\\
A'\ar[r]^{f'}\ar@{=}[d]&D'\ar[r]^{g'}\ar[d]^{d'}&C\ar[d]^c\ar@{-->}[r]^{c^*\delta'}&\\
A'\ar[r]^{x'}&{ B'}\ar[r]^{y'}&C'\ar@{-->}[r]^{\delta'}&}$$
where $h$ is an isomorphism by \cite[Corollary 3.6(1)]{NP}. Set $b=d'hd$, then it is an inflation by Corollary \ref{cor1}.
\end{proof}

\begin{thm}\label{thm1}  If $\xymatrix@C=2em{A\ar[r]^x& B\ar[r]^y& C\ar@{-->}[r]^\rho&}$ is an $\mathbb{E}$-triangle in $\xi$ with $C\in\mathcal{GP}(\xi)$, then  $A\in\mathcal{GP}(\xi)$ if and only if $B\in\mathcal{GP}(\xi)$.
\end{thm}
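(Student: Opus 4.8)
The plan is to prove both implications by exhibiting the relevant object ($B$ in one direction, $A$ in the other) as a syzygy of a complete $\xi$-projective resolution. Two facts will be used throughout. First, $\mathcal{P}(\xi)\subseteq\mathcal{GP}(\xi)$ and a finite direct sum of $\xi$-$\mathcal{G}$projective objects is again $\xi$-$\mathcal{G}$projective, since a finite direct sum of complete $\xi$-projective resolutions is one (here $\xi$ is closed under finite coproducts by Definition \ref{def:proper class}(1)). Second, by Lemma \ref{lem5}(2) every $\mathbb{E}$-triangle in $\xi$ whose right-hand term is $\xi$-$\mathcal{G}$projective is automatically $\mathcal{C}(-,\mathcal{P}(\xi))$-exact; this lets me upgrade $\xi$-exactness to $\mathcal{C}(-,\mathcal{P}(\xi))$-exactness for free whenever a syzygy is already known to be $\xi$-$\mathcal{G}$projective.

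For the extension direction, I assume $A,C\in\mathcal{GP}(\xi)$ and build a complete $\xi$-projective resolution of $B$ by the horseshoe method. Fix complete $\xi$-projective resolutions $\mathbf{P}^{A}$ and $\mathbf{P}^{C}$ with $A$ and $C$ as their $0$-th syzygies. Working one degree at a time in both directions, I splice them into a complex with terms $P^{A}_{n}\oplus P^{C}_{n}$: to the left I lift the $\xi$-deflation $P^{C}_{0}\to C$ through $B\to C$ (possible because $A\to B\to C$ is $\mathcal{C}(-,\mathcal{P}(\xi))$-exact by Lemma \ref{lem5}(2)) and combine it with $P^{A}_{0}\to A\to B$ to obtain a $\xi$-deflation $P^{A}_{0}\oplus P^{C}_{0}\to B$; dually, to the right I extend the $\xi$-inflation $A\to P^{A}_{-1}$ along $A\to B$ and invoke Lemma \ref{leme} to get a $\xi$-inflation $B\to P^{A}_{-1}\oplus P^{C}_{-1}$. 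The $3\times3$ Lemma \ref{lem3} and its dual then realize the new syzygies $Z^{B}_{n}$ as extensions $Z^{A}_{n}\to Z^{B}_{n}\to Z^{C}_{n}$, so all the relevant $\mathbb{E}$-triangles lie in $\xi$ (closure under base and cobase change), and Lemma \ref{lem5}(1) together with its dual propagates $\mathcal{C}(-,\mathcal{P}(\xi))$-exactness from $\mathbf{P}^{A}$ and $\mathbf{P}^{C}$ to the spliced complex. This complex is then a complete $\xi$-projective resolution with $B$ a syzygy, so $B\in\mathcal{GP}(\xi)$.

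For the converse, I assume $B,C\in\mathcal{GP}(\xi)$. Choose a $\mathcal{C}(-,\mathcal{P}(\xi))$-exact $\mathbb{E}$-triangle $Z\to P\to C$ in $\xi$ with $P\in\mathcal{P}(\xi)$ and $Z\in\mathcal{GP}(\xi)$ (a first syzygy taken from a complete resolution of $C$), and apply Lemma \ref{lem1}(1) to this triangle and to $A\xrightarrow{x}B\xrightarrow{y}C$. This produces an object $M$ with $\mathbb{E}$-triangles $A\to M\to P$ and $Z\to M\to B$ in $\xi$. Since $P$ is $\xi$-projective the first triangle splits, whence $M\cong A\oplus P$, and the second becomes an $\mathbb{E}$-triangle $Z\to A\oplus P\to B$ in $\xi$ with both ends $\xi$-$\mathcal{G}$projective; by the extension direction already proved, $A\oplus P\in\mathcal{GP}(\xi)$. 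It then remains to peel off the $\xi$-projective summand $P$: starting from a complete resolution of $A\oplus P$ (with projective terms $R_{n}$) and the split inflation $A\to A\oplus P$ and deflation $A\oplus P\to A$, I apply Lemma \ref{lemc} to the composites $R_{0}\to A\oplus P\to A$ and $A\to A\oplus P\to R_{-1}$. The resulting octahedral (co)syzygies are direct sums of a syzygy of $A\oplus P$ with $P$, hence $\xi$-$\mathcal{G}$projective, and the glued $\mathbb{E}$-triangles are $\mathcal{C}(-,\mathcal{P}(\xi))$-exact by Lemma \ref{lem5}. Splicing produces a complete $\xi$-projective resolution with $A$ as a syzygy, so $A\in\mathcal{GP}(\xi)$.

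The main obstacle is the horseshoe step of the extension direction: one must verify that the assembled maps genuinely are $\xi$-deflations and $\xi$-inflations, that the spliced complex is $\xi$-exact with its syzygies sitting in the expected extensions, and that $\mathcal{C}(-,\mathcal{P}(\xi))$-exactness survives the splicing in both directions simultaneously. This is precisely where Condition \ref{cond:4.11} (WIC), Corollary \ref{cor1} (closure of $\xi$-inflations and $\xi$-deflations under composition), Lemmas \ref{lem3} and \ref{leme}, and the gluing furnished by Lemma \ref{lem5} are needed; the projective-summand peeling in the converse is a milder instance of the same bookkeeping.
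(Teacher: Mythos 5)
Your proof is correct, and while the ``only if'' direction ($A\in\mathcal{GP}(\xi)\Rightarrow B\in\mathcal{GP}(\xi)$) is essentially the paper's own horseshoe construction (lift/extend the $\xi$-projective (co)resolutions of $A$ and $C$ via Lemma \ref{lem5}(2), Lemma \ref{leme} and Lemma \ref{lem3}, then propagate $\mathcal{C}(-,\mathcal{P}(\xi))$-exactness), your ``if'' direction takes a genuinely different route. The paper builds the coresolution half of $A$ directly by (ET4) applied to $B\to P_{-1}^B\to K_{-1}^B$, obtaining $A\to P_{-1}^B\to G$ with $G$ an extension of $K_{-1}^B$ by $C$, and then handles the resolution half by a second horseshoe on the syzygies together with Schanuel's lemma and Lemma \ref{lem5}(3). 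You instead pull back along a $\xi$-deflation $P\to C$ with $P\in\mathcal{P}(\xi)$ (Lemma \ref{lem1}(1)), use that $A\to M\to P$ splits to identify $M\cong A\oplus P$, apply the already-proved extension closure to $Z\to A\oplus P\to B$, and then peel off the $\xi$-projective summand. This is the classical module-theoretic argument and is arguably more transparent; its cost is that the peeling step is a special case of closure under direct summands (Theorem \ref{thm2}), which in this paper is proved \emph{after} and \emph{from} Theorem \ref{thm1}, so it must be established directly --- your octahedral argument does this legitimately, using that any $\mathbb{E}$-triangle in $\xi$ with a $\xi$-projective end term splits (for the cocone side because $P$ is $\xi$-projective, for the cone side because $P\to E\to K_{-1}$ lies in $\xi$ and is $\mathcal{C}(-,\mathcal{P}(\xi))$-exact by Lemma \ref{lem5}(2)), so no circularity arises. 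One small misattribution: lifting $P_0^C\to C$ through $y\colon B\to C$ in the horseshoe uses the \emph{covariant} exactness of $\mathcal{C}(P_0^C,-)$ on the $\mathbb{E}$-triangle in $\xi$ (i.e.\ the definition of $\xi$-projectivity), not the contravariant $\mathcal{C}(-,\mathcal{P}(\xi))$-exactness of Lemma \ref{lem5}(2); the latter is what you need for the dual extension $B\to P_{-1}^A\oplus P_{-1}^C$. This does not affect the validity of the argument.
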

\begin{proof} If $A\in\mathcal{GP}(\xi)$, then there exists a $\mathcal{C}(-,\mathcal{P}(\xi))$-exact $\mathbb{E}$-triangle $\xymatrix@C=2em{A\ar[r]^{g_{-1}^A}& P_{-1}^A\ar[r]^{f_{-1}^A}& K_{-1}^A\ar@{-->}[r]^{\delta_{-1}^A}&}$ in $\xi$ with $P_{-1}^A\in\mathcal{P}(\xi)$ and $K_{-1}^A\in\mathcal{GP}(\xi)$. Similarly, there exists a $\mathcal{C}(-,\mathcal{P}(\xi))$-exact $\mathbb{E}$-triangle $\xymatrix@C=2em{C\ar[r]^{g_{-1}^C}& P_{-1}^C\ar[r]^{f_{-1}^C}& K_{-1}^C\ar@{-->}[r]^{\delta_{-1}^C}&}$ in $\xi$ with $P_{-1}^C\in\mathcal{P}(\xi)$ and $K_{-1}^C\in\mathcal{GP}(\xi)$. It follows from Lemma \ref{lem5}(2) that the $\mathbb{E}$-triangle $\xymatrix@C=2em{A\ar[r]^x& B\ar[r]^y& C\ar@{-->}[r]^\delta&}$ is $\mathcal{C}(-,\mathcal{P}(\xi))$-exact. It is easy to check that $(g_{-1}^A)_*\rho=0=(g_{-1}^C)^*0$, hence there is an inflation { $g_{-1}^B: B\rightarrow P_{-1}^A\oplus P_{-1}^C=:P_{-1}^B$} which makes the following  diagram commutative
$$\xymatrix{A\ar[r]^x\ar[d]^{g_{-1}^A}&B\ar[r]^y\ar[d]^{g_{-1}^B}&C\ar[d]^{g_{-1}^C}\ar@{-->}[r]^{\rho}&\\
P_{-1}^A\ar[r]^{\tiny\begin{bmatrix}1\\0\end{bmatrix}}&P_{-1}^B\ar[r]^{\tiny\begin{bmatrix}0&1\end{bmatrix}}
&P_{-1}^C\ar@{-->}[r]^{0}&\\
}$$
by  Lemma \ref{leme}(1). Hence there exists an $\mathbb{E}$-triangle $\xymatrix@C=2em{B\ar[r]^{g_{-1}^B}& P_{-1}^B\ar[r]^{f_{-1}^B}& K_{-1}^B\ar@{-->}[r]^{\delta_{-1}^B}&}$. By the dual of Lemma \ref{lem3}, there exists a commutative diagram
$$\xymatrix{&A\ar[r]^x\ar[d]^{g_{-1}^A}&B\ar[r]^y\ar[d]^{g_{-1}^B}&C\ar[d]^{g_{-1}^C}\ar@{-->}[r]^{\rho}&\\
{(\dag)}&P_{-1}^A\ar[r]^{\tiny\begin{bmatrix}1\\0\end{bmatrix}}\ar[d]^{f_{-1}^A}&P_{-1}^B\ar[d]^{f_{-1}^B}\ar[r]^{\tiny\begin{bmatrix}0&1\end{bmatrix}}
&P_{-1}^C\ar[d]^{f_{-1}^C}\ar@{-->}[r]^{0}&\\
&K_{-1}^A\ar@{-->}[d]^{\delta_{-1}^A}\ar[r]^{x_{-1}}&K_{-1}^B\ar[r]^{y_{-1}}\ar@{-->}[d]^{\delta_{-1}^B}&K_{-1}^C
\ar@{-->}[d]^{\delta_{-1}^C}\ar@{-->}[r]^{\rho_{-1}}&\\
&&&&}$$
made of $\mathbb{E}$-triangles. Since $f_{-1}^C$ and ${\tiny \begin{bmatrix}0&1\end{bmatrix}}$ are $\xi$-deflations, so is $y_{-1}$ by Proposition \ref{pro1}(2). It is easy to check that  the $\mathbb{E}$-triangle $\xymatrix{K_{-1}^A\ar[r]^{x_{-1}}&K_{-1}^B\ar[r]^{y_{-1}}&K_{-1}^C
\ar@{-->}[r]^{\rho_{-1}}&}$ is isomorphic to an $\mathbb{E}$-triangle in $\xi$ by \cite[Corollary 3.6(3)]{NP}, hence it is an $\mathbb{E}$-triangle in $\xi$. Applying $\mathcal{C}(\mathcal{P}(\xi),-)$ to the above commutative diagram, it is easy to prove that the  $\mathbb{E}$-triangle $\xymatrix@C=2em{B\ar[r]^{g_{-1}^B}& P_{-1}^B\ar[r]^{f_{-1}^B}& K_{-1}^B\ar@{-->}[r]^{\delta_{-1}^B}&}$ is $\mathcal{C}(\mathcal{P}(\xi),-)$-exact by a diagram chasing, hence it is an $\mathbb{E}$-triangle in $\xi$ by Lemma \ref{lem4}. Applying $\mathcal{C}(-,\mathcal{P}(\xi))$ to the above commutative diagram, it is easy to { show} that the   $\mathbb{E}$-triangle $\xymatrix@C=2em{B\ar[r]^{g_{-1}^B}& P_{-1}^B\ar[r]^{f_{-1}^B}& K_{-1}^B\ar@{-->}[r]^{\delta_{-1}^B}&}$  is $\mathcal{C}(-,\mathcal{P}(\xi))$-exact by a diagram chasing. { Proceeding} in this manner, one can get a $\mathcal{C}(-,\mathcal{P}(\xi))$-exact $\xi$-exact complex $\xymatrix{B\ar[r]&P_{-1}^B\ar[r]&P_{-2}^B\ar[r]&\cdots}$. Dually, we can obtain a $\mathcal{C}(-,\mathcal{P}(\xi))$-exact $\xi$-exact complex $\xymatrix{\cdots\ar[r]&P_{1}^B\ar[r]&P_{0}^B\ar[r]&B.}$ Hence there is a complete $\xi$-projective resolution
$\xymatrix{\cdots\ar[r]&P_{1}^B\ar[r]&P_{0}^B\ar[r]&P_{-1}^B\ar[r]&P_{-2}^B\ar[r]&\cdots}$ by pasting  these $\xi$-exact complexes together, which implies that $B\in\mathcal{GP}(\xi)$.

Assume that $B\in\mathcal{GP}(\xi)$, then there is a $\mathcal{C}(-,\mathcal{P}(\xi))$-exact $\mathbb{E}$-triangle $$\xymatrix@C=2em{B\ar[r]^{g_{-1}^B}& P_{-1}^B\ar[r]^{f_{-1}^B}& K_{-1}^B\ar@{-->}[r]^{\delta_{-1}^B}&}$$ in $\xi$ with $P_{-1}^B\in\mathcal{P}(\xi)$ and $K_{-1}^B\in\mathcal{GP}(\xi)$. Hence there exists the following commutative diagram
$$\xymatrix{A\ar[r]^x\ar@{=}[d]&B\ar[r]^y\ar[d]^{g_{-1}^B}&C\ar[d]^{g}\ar@{-->}[r]^{\rho}&\\
A\ar[r]^{g_{-1}^A}&P_{-1}^B\ar[d]^{f_{-1}^B}\ar[r]^{f_{-1}^A}
&G\ar[d]^{f}\ar@{-->}[r]^{\delta_{-1}^A}&\\
&K_{-1}^B\ar@{=}[r]\ar@{-->}[d]^{\delta_{-1}^B}&K_{-1}^B\ar@{-->}[d]^{y_*\delta_{-1}^B}
&\\
&&&}$$
made of $\mathbb{E}$-triangles. It is easy to check that these $\mathbb{E}$-triangles are $\mathcal{C}(\mathcal{P}(\xi), -)$-exact, hence they are $\mathbb{E}$-triangles in $\xi$.
{ Since $\xymatrix{C\ar[r]^{g_{-1}^A}&G\ar[r]^{f_{-1}^A}
&K^B_{-1}\ar@{-->}[r]&}$ is an $\mathbb{E}$-triangle in $\xi$ with
$C,K^B_{-1}\in\mathcal{GP}(\xi)$ and $\mathcal{GP}(\xi)$ is closed under extensions by the above proof, }
we can conclude that $G\in\mathcal{GP}(\xi)$, which implies that the $\mathbb{E}$-triangle
$\xymatrix{A\ar[r]^{g_{-1}^A}&P_{-1}^B\ar[r]^{f_{-1}^A}
&G\ar@{-->}[r]^{\delta_{-1}^A}&}$ is $\mathcal{C}(-,\mathcal{P}(\xi))$-exact by Lemma \ref{lem5}(2).
Note that $G\in\mathcal{GP}(\xi)$, there is a $\mathcal{C}(-,\mathcal{P}(\xi))$-exact $\xi$-exact complex $$\xymatrix{G\ar[r]&P_{-2}^A\ar[r]&P_{-3}^A\ar[r]&\cdots}$$ with $P_{-i}^A\in\mathcal{P}(\xi)$ for any $i\geqslant 2$. Hence we get a $\mathcal{C}(-,\mathcal{P}(\xi))$-exact $\xi$-exact complex $$\xymatrix{A\ar[r]&P_{-1}^B\ar[r]&P_{-2}^A\ar[r]&\cdots}$$ with $P_{-1}^B\in\mathcal{P}(\xi)$ and $P_{-i}^A\in\mathcal{P}(\xi)$ for any $i\geqslant 2$. It { now suffices} to show that there exists a $\mathcal{C}(-,\mathcal{P}(\xi))$-exact $\xi$-exact complex $\xymatrix{\cdots\ar[r]&P_1^A\ar[r]&P_{0}^A\ar[r]&A}$ with $P_{i}^A\in\mathcal{P}(\xi)$  for any $i\geqslant 0$. Assume that $\xymatrix{K_1^A\ar[r]^{g_0^A}&P_0^A\ar[r]^{f_0^A}&A\ar@{-->}[r]^{\delta_0^A}&}$ and $\xymatrix{K_1^C\ar[r]^{g_0^C}&P_0^C\ar[r]^{f_0^C}&C\ar@{-->}[r]^{\delta_0^C}&}$ are $\mathbb{E}$-triangles in $\xi$ with $P_0^A, P_0^C\in\mathcal{P}(\xi)$ and $K_1^C\in\mathcal{GP}(\xi)$. { Following an argument similar to the proof that $\mathcal{GP}(\xi)$ is closed under extensions, one can show that} there exists a commutative diagram
$$\xymatrix{K_{1}^A\ar[d]^{g_0^A}\ar[r]^{x_{1}}&K_{1}^B\ar[d]^{g_0^B}\ar[r]^{y_{1}}&K_{1}^C\ar[d]^{g_0^C}
\ar@{-->}[r]^{\rho_{1}}&\\
P_{0}^A\ar[r]^{\tiny\begin{bmatrix}1\\0\end{bmatrix}}\ar[d]^{f_{0}^A}&P_{0}^B\ar[d]^{f_{0}^B}\ar[r]^{\tiny\begin{bmatrix}0&1\end{bmatrix}}
&P_{0}^C\ar[d]^{f_{0}^C}\ar@{-->}[r]^{0}&\\
A\ar[r]^x\ar@{-->}[d]^{\delta_{0}^A}&B\ar[r]^y\ar@{-->}[d]^{\delta_{0}^B}&C\ar@{-->}[d]^{\delta_{0}^C}\ar@{-->}[r]^{\rho}&\\
&&&}$$
made of $\mathbb{E}$-triangles in $\xi$. Note that the $\mathbb{E}$-triangles except the first vertical in the above diagram are $\mathcal{C}(-,\mathcal{P}(\xi))$-exact by Lemma \ref{lem5}(2), it is easy to show that the first vertical is $\mathcal{C}(-,\mathcal{P}(\xi))$-exact.
Since any { syzygy} of $B$ is a direct summand of { a} $\xi$-$\mathcal{G}$projective object, any $\mathbb{E}$-triangle $\xymatrix{K_2^B\ar[r]^{g_1^B}&P_1^B\ar[r]^{f_1^B}&K_1^B\ar@{-->}[r]^{\delta_1^B}&}$ in $\xi$ is $\mathcal{C}(-,\mathcal{P}(\xi))$-exact by Lemma \ref{lem5}(3). { Proceeding} in this manner, one can get a $\mathcal{C}(-,\mathcal{P}(\xi))$-exact $\xi$-exact complex $\xymatrix{\cdots\ar[r]&P_1^A\ar[r]&P_{0}^A\ar[r]&A}$ with $P_{i}^A\in\mathcal{P}(\xi)$  for any $i\geqslant 0$. Hence $A\in\mathcal{GP}(\xi)$, as desired.
 \end{proof}

\begin{thm}\label{thm2}  $\mathcal{GP}(\xi)$ is closed under direct { summands}.
\end{thm}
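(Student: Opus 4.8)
The plan is to show that a direct summand $A$ of a $\xi$-$\mathcal{G}$projective object is itself $\xi$-$\mathcal{G}$projective by assembling a complete $\xi$-projective resolution having $A$ as a cycle. Write $G=A\oplus B\in\mathcal{GP}(\xi)$; since the two summands play symmetric roles it suffices to treat $A$. I will build the \emph{right half} $\cdots\to P_1\to P_0\to A$ and the \emph{left half} $A\to P_{-1}\to P_{-2}\to\cdots$ separately, keeping track at each stage that the objects produced remain direct summands of $\xi$-$\mathcal{G}$projective objects, and checking that every defining $\mathbb{E}$-triangle is $\mathcal{C}(-,\mathcal{P}(\xi))$-exact.

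First I record the preparatory fact that every $\xi$-syzygy of $A$ is again a direct summand of a $\xi$-$\mathcal{G}$projective object. Given $\mathbb{E}$-triangles $K\to P\to A\dashrightarrow$ and $L\to Q\to B\dashrightarrow$ in $\xi$ with $P,Q\in\mathcal{P}(\xi)$, their coproduct $K\oplus L\to P\oplus Q\to G\dashrightarrow$ lies in $\xi$, and comparing it by Schanuel's lemma with the defining $\mathbb{E}$-triangle $K^{G}\to P^{G}\to G\dashrightarrow$ of $G$ yields $(K\oplus L)\oplus P^{G}\cong K^{G}\oplus(P\oplus Q)$. The right-hand side lies in $\mathcal{GP}(\xi)$ because $\mathcal{P}(\xi)\subseteq\mathcal{GP}(\xi)$ and $\mathcal{GP}(\xi)$ is closed under finite coproducts, so $K$ is a summand of a $\xi$-$\mathcal{G}$projective object. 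Iterating $\xi$-projective deflations then produces a $\xi$-exact complex $\cdots\to P_1\to P_0\to A$ all of whose defining $\mathbb{E}$-triangles have their third term a summand of a $\xi$-$\mathcal{G}$projective object, hence are $\mathcal{C}(-,\mathcal{P}(\xi))$-exact by Lemma \ref{lem5}(3). This completes the right half.

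The left half is the crux. Using that $G$ is a cycle of a complete $\xi$-projective resolution, fix its defining $\mathcal{C}(-,\mathcal{P}(\xi))$-exact $\mathbb{E}$-triangle $G\xrightarrow{g}P\to G'\dashrightarrow$ in $\xi$ with $P\in\mathcal{P}(\xi)$ and $G'\in\mathcal{GP}(\xi)$. Composing $g$ with the split inflation $\iota_A\colon A\to G$ gives, by Corollary \ref{cor1}, a $\xi$-inflation $g\iota_A$, hence an $\mathbb{E}$-triangle $A\xrightarrow{g\iota_A}P\to C\dashrightarrow$ in $\xi$. It is $\mathcal{C}(-,\mathcal{P}(\xi))$-exact: any $\phi\colon A\to W$ with $W\in\mathcal{P}(\xi)$ extends to $\phi\pi_A\colon G\to W$, which factors through $g$ by exactness of the $G$-triangle, and the resulting map $P\to W$ restricts to $\phi$ along $g\iota_A$. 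To see that $C$ is again a summand of a $\xi$-$\mathcal{G}$projective object, I apply the octahedral Lemma \ref{lemc}(1) to $A\xrightarrow{\iota_A}G\xrightarrow{g}P$: the three cofibers fit into an $\mathbb{E}$-triangle $B\to C\to G'\dashrightarrow$ realizing $(\pi_B)_*$ of the $G$-extension, which lies in $\xi$ as $\xi$ is closed under cobase change. Taking the coproduct with the split $\mathbb{E}$-triangle $A\xrightarrow{=}A\to 0$ yields $G\to C\oplus A\to G'\dashrightarrow$ in $\xi$ whose outer terms are $\xi$-$\mathcal{G}$projective, so $C\oplus A\in\mathcal{GP}(\xi)$ by the extension closure contained in Theorem \ref{thm1}. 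Thus $C$ is a summand of a $\xi$-$\mathcal{G}$projective object, and the construction iterates to give the $\mathcal{C}(-,\mathcal{P}(\xi))$-exact left half $A\to P_{-1}\to P_{-2}\to\cdots$.

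Pasting the two halves along $A$ produces a complete $\xi$-projective resolution whose cycle at the splice is $A$, whence $A\in\mathcal{GP}(\xi)$; the same argument applied to $B$ finishes the proof. The main obstacle is the left half: unlike the right half, it cannot be obtained by merely resolving $A$, and one must transport the complete resolution of $G$ to $A$ through the split inclusion while simultaneously controlling the cosyzygies. This is where the octahedral axiom (Lemma \ref{lemc}) and the extension-closure half of Theorem \ref{thm1} do the real work, while verifying $\mathcal{C}(-,\mathcal{P}(\xi))$-exactness on both sides via Lemma \ref{lem5} and the explicit factorizations through the split (co)sections is the remaining bookkeeping.
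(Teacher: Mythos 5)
Your proof is correct, but it reaches the conclusion by a genuinely different route from the paper's. For the left half of the complete resolution the paper runs, at each stage, two interlocking diagrams built from Lemma \ref{lem1} and the two split $\mathbb{E}$-triangles $H\to G\to H'$ and $H'\to G\to H$: it first maps the split triangle into $G\to P_{-1}\to K_{-1}$ to produce $H\to P_{-1}\to X$ together with $H'\to X\to K_{-1}$, then pushes out along the other split triangle to trap the cosyzygy $X$ in a triangle $X\to G_{-1}\to H$ with $G_{-1}\in\mathcal{GP}(\xi)$ by Theorem \ref{thm1}, and it verifies $\mathcal{C}(-,\mathcal{P}(\xi))$-exactness by diagram chases through Lemma \ref{lem5}(1)--(2). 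You instead compose the split inflation $A\to G$ with the resolution inflation $G\to P$ (Corollary \ref{cor1}), identify the cone $C$ via the octahedron Lemma \ref{lemc}(1) as sitting in $B\to C\to G'$, and add the trivial summand $A$ to exhibit $C\oplus A$ as an extension of $G$ by $G'$, so Theorem \ref{thm1} gives $C\oplus A\in\mathcal{GP}(\xi)$ and $C$ is a summand of a $\xi$-$\mathcal{G}$projective object; this is more economical and makes the role of extension-closure transparent. Your right half, obtained from Schanuel's lemma, likewise replaces the paper's appeal to the dual construction. One caveat: your direct check that $A\xrightarrow{g\iota_A}P\to C$ is $\mathcal{C}(-,\mathcal{P}(\xi))$-exact only establishes surjectivity of $\mathcal{C}(g\iota_A,W)$, whereas the definition also requires injectivity of the map $\mathcal{C}(C,W)\to\mathcal{C}(P,W)$ induced by the deflation $P\to C$, and this is not automatic in an extriangulated category (the relevant long exact sequence does not begin with $0$). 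This does not damage the argument, because once $C$ is known to be a direct summand of $C\oplus A\in\mathcal{GP}(\xi)$, Lemma \ref{lem5}(3) --- which you already invoke for the right half --- yields the full $\mathcal{C}(-,\mathcal{P}(\xi))$-exactness of any $\mathbb{E}$-triangle in $\xi$ ending at $C$; you should simply cite it here as well rather than argue by hand.
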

\begin{proof} Assume that $G\in\mathcal{GP}(\xi)$ and $H$ is a direct summand of $G$, then there exists $H'\in\mathcal{C}$ such that $G=H\oplus H'$. Therefore there exist two split $\mathbb{E}$-triangles  $\xymatrix@C=2em{H\ar[r]^{\tiny\begin{bmatrix}1\\0\end{bmatrix}}&G\ar[r]^{\tiny\begin{bmatrix}0&1\end{bmatrix}\ \ }&H'\ar@{-->}[r]^0&}$ and $\xymatrix@C=2em{H'\ar[r]^{\tiny\begin{bmatrix}0\\1\end{bmatrix}}&G\ar[r]^{\tiny\begin{bmatrix}1&0\end{bmatrix}\ \ }&H\ar@{-->}[r]^0&}$
in $\xi$. Since $G\in\mathcal{GP}(\xi)$, there is an $\mathbb{E}$-triangle $$\xymatrix{G\ar[r]^{\alpha_{-1}}&P_{-1}\ar[r]^{\beta_{-1}}&K_{-1}\ar@{-->}[r]^{\delta_{-1}}&}$$ in $\xi$ with $P_{-1}\in\mathcal{P}(\xi)$ and $K_{-1}\in\mathcal{GP}(\xi)$. Hence there exists a commutative diagram

$$\xymatrix@C=3em{
  H\ar[r]^{\tiny\begin{bmatrix}1\\0\end{bmatrix}} \ar@{=}[d] &G \ar[r]^{\tiny\begin{bmatrix}0&1\end{bmatrix}} \ar[d]^{\alpha_{-1}}& H' \ar[d]^{\alpha'_{-1}} \ar@{-->}[r]^0 &  \\
  H\ar[r]^{g_{-1}} & P_{-1} \ar[r]^{f_{-1}} \ar[d]^{\beta_{-1}} & X\ar@{-->}[r]^{\rho_{-1}} \ar[d]^{\beta'_{-1}} &\\
  & K_{-1} \ar@{-->}[d]^{\delta_{-1}} \ar@{=}[r] &K_{-1}\ar@{-->}[d]^{\tiny\begin{bmatrix}0&1\end{bmatrix}_*\delta_{-1}}& \\
  &&&}
$$
made of  $\mathbb{E}$-triangles in $\xi$ because it is easy to check that these $\mathbb{E}$-triangles are $\mathcal{C}(\mathcal{P}(\xi), -)$-exact. Since the first row, the second and the third columns are $\mathcal{C}(-,\mathcal{P}(\xi))$-exact by Lemma \ref{lem5}(2), it is easy to show that $\xymatrix@C=2em{H\ar[r]^{g_{-1}}&P_{-1}\ar[r]^{f_{-1}}&X\ar@{-->}[r]^{\rho_{-1}}&}$ is $\mathcal{C}(-,\mathcal{P}(\xi))$-exact by diagram chasing.
Note that $\xymatrix@C=2em{H'\ar[r]^{\alpha'_{-1}}&X\ar[r]^{\beta'_{-1}}&K_{-1}\ar@{-->}[r]^{\tiny\begin{bmatrix}0&1\end{bmatrix}_*\delta_{-1}}&}$  and $\xymatrix@C=2em{H'\ar[r]^{\tiny\begin{bmatrix}0\\1\end{bmatrix}}&G\ar[r]^{\tiny\begin{bmatrix}1&0\end{bmatrix}\ \ }&H\ar@{-->}[r]^0&}$ are  $\mathbb{E}$-triangles in $\xi$. { Then} there exists a commutative diagram
$$\xymatrix@C=3em{
    H' \ar[d]_{\tiny\begin{bmatrix}0\\1\end{bmatrix}} \ar[r]^{\alpha'_{-1}} & X \ar[d]_{g'_{-1}} \ar[r]^{\beta'_{-1}} & K_{-1} \ar@{=}[d] \ar@{-->}[r]^{\tiny\begin{bmatrix}0&1\end{bmatrix}_*\delta_{-1}}&\\
  G \ar[d]_{\tiny\begin{bmatrix}1&0\end{bmatrix}} \ar[r]^{\alpha''_{-1}} & G_{-1} \ar[d]_{f'_{-1}} \ar[r]^{\beta''_{-1}}&K_{-1}\ar@{-->}[r]^{\delta_{-1}'}&\\
    H \ar@{=}[r]\ar@{-->}[d]^0&  H \ar@{-->}[d]^0& & \\
    && &}
$$
such that $\xymatrix@C=2em{G\ar[r]^{\alpha''_{-1}}&G_{-1}\ar[r]^{\beta''_{-1}}&K_{-1}\ar@{-->}[r]^{\delta_{-1}'}&}$ and $\xymatrix@C=2em{X\ar[r]^{g'_{-1}}&G_{-1}\ar[r]^{f'_{-1}}&H\ar@{-->}[r]^{0}&}$ are $\mathbb{E}$-triangles in $\xi$  because $\xi$ is closed under cobase change. It follows from Theorem \ref{thm1} that  $G_{-1}\in\mathcal{GP}(\xi)$ as $G, K_{-1}\in\mathcal{GP}(\xi)$, so there exists an $\mathbb{E}$-triangle
$\xymatrix@C=2em{G_{-1}\ar[r]^{\alpha_{-2}}&P_{-2}\ar[r]^{\beta_{-2}}&K_{-2}\ar@{-->}[r]^{\delta_{-2}}&}$ in $\xi$ with $P_{-2}\in\mathcal{P}(\xi)$ and $K_{-2}\in\mathcal{GP}(\xi)$. Hence there exists a commutative diagram
$$\xymatrix@C=3em{
  X\ar[r]^{g_{-1}'} \ar@{=}[d] &G_{-1} \ar[r]^{f_{-1}'} \ar[d]^{\alpha_{-2}}& H \ar[d]^{\alpha'_{-1}} \ar@{-->}[r]^0 &  \\
  X\ar[r]^{g_{-2}} & P_{-2} \ar[r]^{f_{-2}} \ar[d]^{\beta_{-2}} & Y\ar@{-->}[r]^{\rho_{-2}} \ar[d]^{\beta'_{-2}} &\\
  & K_{-2} \ar@{-->}[d]^{\delta_{-2}} \ar@{=}[r] &K_{-2}\ar@{-->}[d]^{(f'_{-1})_*\delta_{-2}}& \\
  &&&}
$$ made of  $\mathbb{E}$-triangles in $\xi$ because it is easy to check that these $\mathbb{E}$-triangles are $\mathcal{C}(\mathcal{P}(\xi), -)$-exact. Similar to the proof above, one can prove that  the $\mathbb{E}$-triangle $\xymatrix@C=2em{X\ar[r]^{g_{-2}}&P_{-2}\ar[r]^{f_{-2}}&Y\ar@{-->}[r]^{\rho_{-2}}&}$ is $\mathcal{C}(-,\mathcal{P}(\xi))$-exact.  One can get a $\mathcal{C}(-,\mathcal{P}(\xi))$-exact $\xi$-exact complex $\xymatrix@C=1.7em{H\ar[r]&P_{-1}\ar[r]& P_{-2}\ar[r]&\cdots}$ with each $P_{-i}\in\mathcal{P}(\xi)$ for $i\geqslant 1$ by proceeding in this manner. Dually, we can obtain a $\mathcal{C}(-,\mathcal{P}(\xi))$-exact $\xi$-exact complex $\xymatrix@C=2em{\cdots\ar[r]&P_{1}\ar[r]& P_{0}\ar[r]&H}$ with each $P_{i}\in\mathcal{P}(\xi)$ for $i\geqslant 0$. Hence $H\in\mathcal{GP}(\xi)$, as desired.
\end{proof}

As a consequence of Example \ref{Ex:4.12}(2) and Theorem \ref{thm2}, we have the following corollary.
\begin{cor}\label{corllary3} Let $\mathcal{C}$ be a triangulated category and $\xi$ a proper class of triangles. Then $\mathcal{GP}(\xi)$ is closed under direct { summands}.
\end{cor}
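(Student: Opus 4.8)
The plan is to recognize Corollary~\ref{corllary3} as the direct specialization of Theorem~\ref{thm2} to the triangulated setting, so that no genuinely new argument is required; the work is entirely in checking that the hypotheses transfer. First I would recall that any triangulated category $(\mathcal{C},\Sigma,\Delta)$ carries its canonical extriangulated structure $(\mathcal{C},\mathbb{E},\mathfrak{s})$, in which $\mathbb{E}(C,A)=\mathcal{C}(C,\Sigma A)$ and $\mathfrak{s}$ sends an extension $\delta\in\mathbb{E}(C,A)$ to the equivalence class of the distinguished triangle it determines (this is one of the motivating examples of \cite{NP}). Under this identification a proper class of triangles $\xi$ in the sense of Beligiannis \cite{Bel1} --- a class closed under isomorphisms and suspension satisfying his axioms --- corresponds precisely to a proper class of $\mathbb{E}$-triangles in the sense of Definition~\ref{def:proper class}, and the $\xi$-$\mathcal{G}$projective objects of the present paper then coincide with those introduced by Asadollahi and Salarian \cite{AS1}, as already noted in the example following the definition of $\xi$-$\mathcal{G}$projective objects.

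Second, I would verify that the one extra hypothesis needed to run Theorem~\ref{thm2}, namely Condition (WIC) of Condition~\ref{cond:4.11}, is available without cost here. This is exactly the content of Example~\ref{Ex:4.12}(2): in any triangulated category both parts of Condition (WIC) hold automatically, since every morphism sits in a distinguished triangle and the relevant factorizations are forced by the triangle axioms. The standing assumptions of this subsection (enough $\xi$-projectives and enough $\xi$-injectives) are part of the ambient framework inherited from Asadollahi and Salarian's setup, so they are in force as well.

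With the dictionary of the first paragraph and the automatic validity of Condition (WIC) from the second, every hypothesis of Theorem~\ref{thm2} is satisfied, and its conclusion applies verbatim to yield that $\mathcal{GP}(\xi)$ is closed under direct summands. I expect the only point demanding any care --- and it is a mild, bookkeeping-level obstacle rather than a mathematical one --- to be the faithful matching of Beligiannis's axioms for a proper class of triangles against the three conditions of Definition~\ref{def:proper class} (closure under finite coproducts together with $\Delta_0\subseteq\xi$, closure under base and cobase change, and saturation). Once this translation is recorded, the corollary is an immediate invocation of the general theorem.
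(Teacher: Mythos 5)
Your proposal is correct and matches the paper exactly: the corollary is stated there as an immediate consequence of Example \ref{Ex:4.12}(2) (Condition (WIC) holds automatically in a triangulated category) together with Theorem \ref{thm2}, which is precisely the specialization you describe. No further comment is needed.
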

\begin{rem} \label{rem:4.19} We note that Corollary \ref{corllary3} refines a result of Asadollahi and Salarian in \cite{AS1}. In their paper, they showed that for any triangulated cateogory $\mathcal{C}$ with enough $\mathcal{E}$-projectives, if the class $\mathcal{GP}(\mathcal{E})$ of $\mathcal{E}$-$\mathcal{G}$projective objects in $\mathcal{C}$ is closed under countable direct sums, then $\mathcal{GP}(\mathcal{E})$ is closed under direct summands, see the proof of  \cite[Proposition 3.13]{AS1}.
\end{rem}

\begin{cor}\label{pro3} If $\xymatrix@C=1.5em{A\ar[r]&B\ar[r]&C\ar@{-->}[r]&}$ is an $\mathbb{E}$-triangle in $\xi$ with  $A,B\in \mathcal{GP}(\xi)$, then $C\in\mathcal{GP}(\xi)$ if and only if any $\mathbb{E}$-triangle$\xymatrix@C=1.5em{A'\ar[r]&B'\ar[r]&C\ar@{-->}[r]&}$ in $\xi$ is $\mathcal{C}(-,\mathcal{P}(\xi))$-exact.
\end{cor}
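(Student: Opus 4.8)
The statement is an equivalence, and the two implications are of very different difficulty. For the direction $(\Rightarrow)$, assume $C\in\mathcal{GP}(\xi)$. Any $\mathbb{E}$-triangle $A'\to B'\to C$ in $\xi$ then has its third term $\xi$-$\mathcal{G}$projective, so Lemma \ref{lem5}(2) applies directly and shows that it is $\mathcal{C}(-,\mathcal{P}(\xi))$-exact; nothing more is needed, and in fact this half does not even use the hypotheses $A,B\in\mathcal{GP}(\xi)$.

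For the direction $(\Leftarrow)$ the plan is to produce a complete $\xi$-projective resolution having $C$ as a syzygy, by constructing a $\mathcal{C}(-,\mathcal{P}(\xi))$-exact left half $\cdots\to P_1\to P_0\to C$ and a $\mathcal{C}(-,\mathcal{P}(\xi))$-exact right half $C\to P_{-1}\to P_{-2}\to\cdots$ with all $P_i\in\mathcal{P}(\xi)$, and then splicing them, exactly in the spirit of the proofs of Theorems \ref{thm1} and \ref{thm2}. For the left half I would first choose, using enough $\xi$-projectives, an $\mathbb{E}$-triangle $K_1\to P_0\to C$ in $\xi$ with $P_0\in\mathcal{P}(\xi)$; by hypothesis this triangle is already $\mathcal{C}(-,\mathcal{P}(\xi))$-exact. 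The crucial point is that $K_1\in\mathcal{GP}(\xi)$. To prove this, apply Lemma \ref{lem1}(1) to the two $\mathbb{E}$-triangles $A\to B\to C$ and $K_1\to P_0\to C$ (both ending in $C$): the resulting $3\times 3$ diagram produces an object $M$ fitting in $\mathbb{E}$-triangles $A\to M\to P_0$ and $K_1\to M\to B$, both in $\xi$ because $\xi$ is closed under base change. Since $A\in\mathcal{GP}(\xi)$ and $P_0\in\mathcal{P}(\xi)\subseteq\mathcal{GP}(\xi)$, and $\mathcal{GP}(\xi)$ is closed under extensions (established inside the proof of Theorem \ref{thm1}), the first triangle gives $M\in\mathcal{GP}(\xi)$; then the triangle $K_1\to M\to B$ with $B,M\in\mathcal{GP}(\xi)$ forces $K_1\in\mathcal{GP}(\xi)$ by Theorem \ref{thm1}. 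Appending a left half of a complete $\xi$-projective resolution of $K_1$---which exists and is $\mathcal{C}(-,\mathcal{P}(\xi))$-exact precisely because $K_1\in\mathcal{GP}(\xi)$---to the triangle $K_1\to P_0\to C$ completes the left half.

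For the right half the hypothesis is not needed; only $A,B\in\mathcal{GP}(\xi)$ is used. I would take a right-half triangle $B\to P^B\to K^B$ of $B$ (so $P^B\in\mathcal{P}(\xi)$, $K^B\in\mathcal{GP}(\xi)$) and apply (ET4) to the composable $\xi$-inflations $A\to B\to P^B$ (their composite is again a $\xi$-inflation by Corollary \ref{cor1}). This yields an object $E$ together with $\mathbb{E}$-triangles $A\to P^B\to E$ and $C\to E\to K^B$ which, just as in the proof of Theorem \ref{thm1}, lie in $\xi$ (one checks $\mathcal{C}(\mathcal{P}(\xi),-)$-exactness and invokes Lemma \ref{lem4}). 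Because $A\in\mathcal{GP}(\xi)$, $P^B\in\mathcal{P}(\xi)\subseteq\mathcal{GP}(\xi)$ and $\mathcal{GP}(\xi)$ is extension-closed, the first triangle gives $E\in\mathcal{GP}(\xi)$, so $C$ admits an inflation into the $\xi$-$\mathcal{G}$projective object $E$. Composing $C\to E$ with a right-half inflation $E\to P^E$ ($P^E\in\mathcal{P}(\xi)$) of $E$ and applying (ET4) once more produces an $\mathbb{E}$-triangle $C\to P^E\to E_1$ in $\xi$ with $P^E\in\mathcal{P}(\xi)$ and $E_1\in\mathcal{GP}(\xi)$ (again by extension-closedness, via the companion triangle $K^B\to E_1\to K^E$); this triangle is automatically $\mathcal{C}(-,\mathcal{P}(\xi))$-exact by Lemma \ref{lem5}(2). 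Continuing with $E_1$ in place of $C$---now simply using the right half of the complete resolution of $E_1\in\mathcal{GP}(\xi)$---builds the whole right half. Splicing the two halves exhibits $C$ as a syzygy $K_0$ of a complete $\xi$-projective resolution, whence $C\in\mathcal{GP}(\xi)$.

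The main obstacle is the right half. In contrast to the left half, $C$ cannot be embedded into a $\xi$-projective in one step, so the two successive applications of (ET4)---first realizing $C$ as a subobject of the $\xi$-$\mathcal{G}$projective object $E$, then pushing $E$ into an honest $\xi$-projective---together with the bookkeeping needed to certify that each newly produced $\mathbb{E}$-triangle actually lies in $\xi$ (via closure under base and cobase change and Lemma \ref{lem4}) and that each is $\mathcal{C}(-,\mathcal{P}(\xi))$-exact, is where the care must be taken so that the final spliced complex is a genuine complete $\xi$-projective resolution.
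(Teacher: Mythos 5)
Your ``only if'' direction is fine, and the left half of your ``if'' direction (showing that the first $\xi$-syzygy $K_1$ of $C$ is $\xi$-$\mathcal{G}$projective via Lemma \ref{lem1}(1), extension-closure, and Theorem \ref{thm1}) is correct. The right half, however, contains a genuine gap at its very first step. You claim that the $\mathbb{E}$-triangle $A\to P^B\to E$ with $A\in\mathcal{GP}(\xi)$ and $P^B\in\mathcal{P}(\xi)\subseteq\mathcal{GP}(\xi)$ forces $E\in\mathcal{GP}(\xi)$ ``by extension-closedness.'' Extension-closure (the first half of Theorem \ref{thm1}) produces the \emph{middle} term from the two end terms; here $E$ is the third term, not the middle one, so the lemma does not apply. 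Worse, the assertion itself is false in general: any object of $\xi$-projective dimension exactly $1$ is the cone of a $\xi$-inflation between $\xi$-projective (hence $\xi$-$\mathcal{G}$projective) objects, yet by Proposition \ref{pro6} it is not $\xi$-$\mathcal{G}$projective. Note also that, by Theorem \ref{thm1} applied to the companion triangle $C\to E\to K^B$ (whose third term $K^B$ lies in $\mathcal{GP}(\xi)$), proving $E\in\mathcal{GP}(\xi)$ is \emph{equivalent} to the goal $C\in\mathcal{GP}(\xi)$, so at this point your argument is circular. Since everything downstream (the triangle $C\to P^E\to E_1$, the companion $K^B\to E_1\to K^E$, and the entire right half of the resolution) depends on $E\in\mathcal{GP}(\xi)$, the construction collapses. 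The hypothesis of the corollary only controls $\mathbb{E}$-triangles ending in $C$, and your right-half construction never invokes it, which is a sign that something must be wrong: the ``if'' direction is certainly not true without it.

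The paper avoids coresolving $C$ altogether. It takes the coresolution step $A\to P\to K$ of $A$ (with $P\in\mathcal{P}(\xi)$, $K\in\mathcal{GP}(\xi)$) and forms the cobase-change diagram of $A\to B\to C$ along $A\to P$, producing $\mathbb{E}$-triangles $P\xrightarrow{x} G\to C$ and $B\to G\to K$ in $\xi$. The second has both end terms in $\mathcal{GP}(\xi)$, so $G\in\mathcal{GP}(\xi)$ by a legitimate application of extension-closure. The first ends in $C$, so the hypothesis makes it $\mathcal{C}(-,\mathcal{P}(\xi))$-exact; as $P\in\mathcal{P}(\xi)$, the identity of $P$ lifts along $x$, the triangle splits, and $C$ is a direct summand of $G$. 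Theorem \ref{thm2} (closure under direct summands) then gives $C\in\mathcal{GP}(\xi)$. If you want to repair your argument, this splitting trick is the missing idea: the hypothesis is used exactly once, to split a triangle $P\to G\to C$ with $\xi$-projective first term, rather than to build a coresolution of $C$ term by term.
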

\begin{proof}  The ``only if" part is clear. For the ``if" part, since $A\in \mathcal{GP}(\xi)$, there exists an $\mathbb{E}$-triangle
$\xymatrix@C=1.5em{A\ar[r]&P\ar[r]&K\ar@{-->}[r]&}$ in $\xi$ with $P\in \mathcal{P}(\xi)$ and $K\in \mathcal{GP}(\xi)$.
Then we have the following commutative diagram
$$\xymatrix{A\ar[r]\ar[d]&B\ar[r]\ar[d]&C\ar@{=}[d]\ar@{-->}[r]&\\
 P\ar[r]^x\ar[d]&G\ar[r]^y\ar[d]&C\ar@{-->}[r]&\\
 K\ar@{-->}[d]\ar@{=}[r]&K\ar@{-->}[d]&&\\
 &&&}$$
where all rows and columns are $\mathbb{E}$-triangles in $\xi$ because $\xi$ is closed under cobase change. { It follows that $G\in \mathcal{GP}(\xi)$
since $B,K\in \mathcal{GP}(\xi)$ and $\mathcal{GP}(\xi)$ is closed under extensions by Theorem \ref{thm1}.
We know that the $\mathbb{E}$-triangle $\xymatrix@C=2em{P\ar[r]^x&G\ar[r]^y&C\ar@{-->}[r]&}$ in $\xi$ is
$\C(-,\mathcal P(\xi))$-exact by hypothesis.  Since $P\in\mathcal P(\xi)$, we have that
$x$ is a section. Thus $\xymatrix@C=2em{P\ar[r]^x&G\ar[r]^y&C\ar@{-->}[r]&}$ splits. }  Hence $C\in\mathcal{GP}(\xi)$ by Theorem \ref{thm2}.
\end{proof}
\begin{rem} Using arguments analogous to the ones employed in this subsection, one may { show} that if  $(\mathcal{C}, \mathbb{E}, \mathfrak{s})$ is an extriangulated category satisfying Condition \emph{(WIC)}, then the category of $\xi$-$\mathcal{G}$injective objects is full, additive, closed under isomorphisms { and} direct summands. The details are left to the reader.
\end{rem}

\section{\bf $\xi$-$\mathcal{G}$projective dimension and addmissible model structure}

Throughout this section, { we always assume that $(\mathcal{C}, \mathbb{E}, \mathfrak{s})$ is
an extriangulated category and $\xi$ a proper class of $\mathbb{E}$-triangles. Moreover $\C$ has enough $\xi$-projectives and enough $\xi$-injectives, and satisfies Condition (WIC).}

\subsection{$\xi$-$\mathcal{G}$projective dimension}
The $\xi$-$\mathcal{G}$projective dimension $\xi$-$\mathcal{G}$${\rm pd} A$ of an object $A\in\mathcal{C}$ is defined inductively.
 If $A\in\mathcal{GP}(\xi)$ then define $\xi$-$\mathcal{G}$${\rm pd} A=0$.
Next by induction, for an integer $n>0$, put $\xi$-$\mathcal{G}$${\rm pd} A\leqslant n$ if there exists an $\mathbb{E}$-triangle $\xymatrix{K\ar[r]&G\ar[r]&A\ar@{-->}[r]&}$ in $\xi$ with $G\in \mathcal{GP}(\xi)$ and $\xi$-$\mathcal{G}{\rm pd} K\leqslant n-1$.

We define $\xi$-$\mathcal{G}$${\rm pd} A=n$ if $\xi$-$\mathcal{G}{\rm pd} A\leqslant n$ and $\xi$-$\mathcal{G}{\rm pd} A\nleq n-1$. If $\xi$-$\mathcal{G}$${\rm pd} A\neq n$ for all $n\geqslant 0$, we set $\xi$-$\mathcal{G}$${\rm pd} A=\infty$.


We let $\widetilde{\mathcal{GP}}(\xi)$ (respectively $\widetilde{\mathcal{P}}(\xi)$)  denote the full subcategory of $\mathcal{C}$ whose objects are of finite $\xi$-$\mathcal{G}$projective (respectively $\xi$-projective) dimension. The following observation is useful in this section.

\begin{lem}\label{pro4} If $\xymatrix{X\ar[r]&Y\ar[r]&Z\ar@{-->}[r]&}$ is an $\mathbb{E}$-triangle in $\xi$ such that $X\neq 0$ and $Z\in\mathcal{GP}(\xi)$, { then} $\xi$-$\mathcal{G}{\rm pd} X=\xi$-$\mathcal{G}{\rm pd} Y$.
\end{lem}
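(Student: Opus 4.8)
The plan is to prove the two inequalities $\xi$-$\mathcal{G}{\rm pd}\,X\le\xi$-$\mathcal{G}{\rm pd}\,Y$ and $\xi$-$\mathcal{G}{\rm pd}\,Y\le\xi$-$\mathcal{G}{\rm pd}\,X$ separately, both anchored by Theorem \ref{thm1}, which supplies the base case ($X\in\mathcal{GP}(\xi)\Leftrightarrow Y\in\mathcal{GP}(\xi)$ when $Z\in\mathcal{GP}(\xi)$) and the fact that $\mathcal{GP}(\xi)$ is closed under extensions. Throughout I would work inside the extriangulated category $(\mathcal{C},\mathbb{E}_\xi,\mathfrak{s}_\xi)$ provided by Theorem \ref{thma}, so that every conflation produced by the axioms is automatically an $\mathbb{E}$-triangle in $\xi$ and I need not recheck membership at each step. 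The hypothesis $X\neq 0$ is used only to discard the degenerate case $X=0$ (where $Y\cong Z$) and to keep both dimensions non-negative.

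For $\xi$-$\mathcal{G}{\rm pd}\,X\le\xi$-$\mathcal{G}{\rm pd}\,Y$ I expect no induction is needed. Assuming $m:=\xi$-$\mathcal{G}{\rm pd}\,Y<\infty$, the case $m=0$ is Theorem \ref{thm1}. For $m\ge 1$, choose an $\mathbb{E}$-triangle $K_Y\to G_Y\xrightarrow{p} Y\dashrightarrow$ in $\xi$ with $G_Y\in\mathcal{GP}(\xi)$ and $\xi$-$\mathcal{G}{\rm pd}\,K_Y\le m-1$. The morphisms $p$ and $y\colon Y\to Z$ are $\xi$-deflations, so their composite $yp$ is a $\xi$-deflation by Corollary \ref{cor1}; applying ${\rm (ET4)}^{\rm op}$ (equivalently Lemma \ref{lemc}(2)) to $G_Y\xrightarrow{p}Y\xrightarrow{y}Z$ yields $\mathbb{E}$-triangles $W\to G_Y\to Z\dashrightarrow$ and $K_Y\to W\to X\dashrightarrow$ in $\xi$, where $W$ is the fibre of $yp$. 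Since $G_Y,Z\in\mathcal{GP}(\xi)$, Theorem \ref{thm1} gives $W\in\mathcal{GP}(\xi)$; then the triangle $K_Y\to W\to X$ presents $W\in\mathcal{GP}(\xi)$ mapping onto $X$ with syzygy $K_Y$ of $\xi$-$\mathcal{G}$projective dimension $\le m-1$, which is exactly the configuration in the definition, whence $\xi$-$\mathcal{G}{\rm pd}\,X\le m$.

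For the reverse inequality I would induct on $n:=\xi$-$\mathcal{G}{\rm pd}\,X$ (assumed finite). The case $n=0$ is again Theorem \ref{thm1}. For $n\ge 1$, take an $\mathbb{E}$-triangle $K_X\to G_X\to X\dashrightarrow$ in $\xi$ with $G_X\in\mathcal{GP}(\xi)$ and $\xi$-$\mathcal{G}{\rm pd}\,K_X\le n-1$ (here $K_X\neq 0$, since $K_X=0$ would force $X\in\mathcal{GP}(\xi)$ and $n=0$), together with the first step $K_Z\to P_Z\to Z\dashrightarrow$ of a complete $\xi$-projective resolution of $Z$, so that $P_Z\in\mathcal{P}(\xi)$ and $K_Z\in\mathcal{GP}(\xi)$. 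Lifting the $\xi$-deflation $P_Z\to Z$ through $y\colon Y\to Z$ (possible as $P_Z$ is $\xi$-projective) and running the horseshoe construction along $X\to Y\to Z$ produces an $\mathbb{E}$-triangle $K_Y\to G_X\oplus P_Z\to Y\dashrightarrow$ in $\xi$ with $G_X\oplus P_Z\in\mathcal{GP}(\xi)$, together with an $\mathbb{E}$-triangle $K_X\to K_Y\to K_Z\dashrightarrow$ in $\xi$. Applying the induction hypothesis to the latter (with $K_Z\in\mathcal{GP}(\xi)$ and $K_X\neq 0$) gives $\xi$-$\mathcal{G}{\rm pd}\,K_Y\le\xi$-$\mathcal{G}{\rm pd}\,K_X\le n-1$, and the former triangle then yields $\xi$-$\mathcal{G}{\rm pd}\,Y\le n$.

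The main obstacle is organizing the two auxiliary diagrams—the fibre octahedron for composable $\xi$-deflations in the first inequality and the horseshoe for the second—so that all the new rows and columns genuinely lie in $\xi$; this is where I rely on Theorem \ref{thma} (to stay within the $\xi$-extriangulated structure), Corollary \ref{cor1} (composites of $\xi$-deflations), the closure of $\xi$ under base and cobase change, and Lemmas \ref{lem1}, \ref{lemc} and \ref{lem3} for the explicit constructions. A secondary point to verify carefully is that the connecting triangle $K_X\to K_Y\to K_Z$ (respectively $K_Y\to W\to X$) realizes an extension in $\xi$ and has the stated end terms, which follows from the compatibility identities recorded in Lemma \ref{lemc}.
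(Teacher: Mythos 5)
Your proposal is correct and follows essentially the same route as the paper: the inequality $\xi$-$\mathcal{G}{\rm pd}\,Y\le\xi$-$\mathcal{G}{\rm pd}\,X$ via the horseshoe construction on $K_X\to G_X\to X$ and $K_Z\to P_Z\to Z$ (with the connecting triangle $K_X\to K_Y\to K_Z$ handled by induction), and the inequality $\xi$-$\mathcal{G}{\rm pd}\,X\le\xi$-$\mathcal{G}{\rm pd}\,Y$ via the ${\rm (ET4)^{op}}$ octahedron on $G_Y\to Y\to Z$ whose fibre $W$ lies in $\mathcal{GP}(\xi)$ by Theorem \ref{thm1}. Your reorganization into two inequalities (noting that the octahedron direction needs no induction) is a slightly cleaner packaging of the same argument, and your attention to $K_X\neq 0$ and to verifying membership in $\xi$ matches what the paper does.
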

 \begin{proof} The result is clear from Theorem \ref{thm1} if one of the $X$ or $Y$ is $\xi$-$\mathcal{G}$projective. Let $\xi$-$\mathcal{G}{\rm pd} X=n>0$. There exists an $\mathbb{E}$-triangle $\xymatrix{K_X\ar[r]&G_X\ar[r]&X\ar@{-->}[r]&}$ in $\xi$ where $G_X\in\mathcal{GP}(\xi)$ and $\xi$-$\mathcal{G}{\rm pd} K_X=n-1$. Since $Z\in\mathcal{GP}(\xi)$, there exists an $\mathbb{E}$-triangle  $\xymatrix{K_Z\ar[r]&P_Z\ar[r]&Z\ar@{-->}[r]&}$ in $\xi$ where $P_Z\in\mathcal{P}(\xi)$ and $K_Z\in\mathcal{GP}(\xi)$. { Dual to the proof that we obtain the commutative diagram $(\dag)$ in Theorem \ref{thm1}}, we have the following commutative diagram
$$\xymatrix{K_X\ar[r]\ar[d]&K_Y\ar[r]\ar[d]&K_Z\ar[d]\ar@{-->}[r]&\\
G_X\ar[d]\ar[r]&G_X\oplus P_Z\ar[d]\ar[r]&
P_Z\ar[d]\ar@{-->}[r]&\\
X\ar[r]\ar@{-->}[d]&Y\ar@{-->}[d]\ar[r]&Z\ar@{-->}[d]\ar@{-->}[r]&\\
&&&
}$$
 { where} all rows and columns are $\mathbb{E}$-triangles in $\xi$. Hence $G_X\oplus P_Z\in\mathcal{GP}(\xi)$ because $G_X$ and $P_Z$ are $\xi$-$\mathcal{G}$projective. Now consider the $\mathbb{E}$-triangle $\xymatrix{K_X\ar[r]&K_Y\ar[r]&K_Z\ar@{-->}[r]&}$ in $\xi$, where $K_Z\in\mathcal{GP}(\xi)$ and $\xi$-$\mathcal{G}{\rm pd} K_X=n-1$. We can deduce that $\xi$-$\mathcal{G}{\rm pd} K_Y=n-1$ by induction, and hence $\xi$-$\mathcal{G}{\rm pd} Y=n$.

Now suppose $\xi$-$\mathcal{G}{\rm pd} Y=n>0$. So there exists an $\mathbb{E}$-triangle $\xymatrix{K_Y\ar[r]&G_Y\ar[r]&Y\ar@{-->}[r]&}$ in $\xi$ where $G_Y\in\mathcal{GP}(\xi)$ and $\xi$-$\mathcal{G}{\rm pd} K_Y=n-1$. Consider the following commutative diagram
$$\xymatrix{K_Y\ar@{=}[r]\ar[d]&K_Y\ar[d]&&\\
G_X\ar[r]\ar[d]&G_Y\ar[r]\ar[d]&Z\ar@{=}[d]\ar@{-->}[r]&\\
X\ar[r]\ar@{-->}[d]&Y\ar[r]\ar@{-->}[d]&Z\ar@{-->}[r]&\\
&&&}$$
 { where} all rows and columns are $\mathbb{E}$-triangles  by (ET4)$^{\rm op}$. It is easy to check that these $\mathbb{E}$-triangles are $\mathcal{C}(\mathcal{P}(\xi),-)$-exact by Snake Lemma, hence these $\mathbb{E}$-triangles are in $\xi$ by Lemma \ref{lem4}. So $G_X\in\mathcal{GP}(\xi)$ by Theorem \ref{thm1} because $Z$ and $G_Y$ are $\xi$-$\mathcal{G}$projective. Hence $\xi$-$\mathcal{G}{\rm pd} X=n$. By induction now  we complete the proof.
\end{proof}

\begin{prop}\label{pro5} { The following are equivalent for any object $A$ in $(\mathcal{C}, \mathbb{E}, \mathfrak{s})$}:

\emph{(1)} $\xi$-$\mathcal{G}{\rm pd} A\leqslant n$;

\emph{(2)} For any $\xi$-exact complex $\xymatrix{K_n\ar[r]&G_{n-1}\ar[r]&\cdots\ar[r]&G_1\ar[r]&G_0\ar[r]&A.}$ If all $G_i$ are $\xi$-$\mathcal{G}$projective, then so is $K_n$.

\end{prop}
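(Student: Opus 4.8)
The plan is to prove the two implications separately, treating $(2)\Rightarrow(1)$ directly and $(1)\Rightarrow(2)$ by induction on $n$.

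\smallskip

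For $(2)\Rightarrow(1)$, I would exploit that $\mathcal{C}$ has enough $\xi$-projectives to build, by splicing defining $\mathbb{E}$-triangles $K_{i+1}\to P_i\to K_i$ in $\xi$ (with $K_0=A$), a $\xi$-exact complex $K_n\to P_{n-1}\to\cdots\to P_0\to A$ in which every $P_i\in\mathcal{P}(\xi)$. Each $\xi$-projective object is $\xi$-$\mathcal{G}$projective, as one sees from the split complete $\xi$-projective resolution $\cdots\to 0\to P\xrightarrow{1}P\to 0\to\cdots$ whose $\mathbb{E}$-triangles lie in $\Delta_0\subseteq\xi$ and are automatically $\mathcal{C}(-,\mathcal{P}(\xi))$-exact; hence every $P_i\in\mathcal{GP}(\xi)$, and hypothesis $(2)$ forces $K_n\in\mathcal{GP}(\xi)$. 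Reading the chain of triangles $K_n\to P_{n-1}\to K_{n-1},\ \ldots,\ K_1\to P_0\to A$ from the top downward and applying the inductive definition of $\xi$-$\mathcal{G}$projective dimension then yields successively $\xi$-$\mathcal{G}{\rm pd}\,K_{n-1}\leqslant 1,\ \ldots,\ \xi$-$\mathcal{G}{\rm pd}\,A\leqslant n$.

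\smallskip

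For $(1)\Rightarrow(2)$ I would induct on $n$, the case $n=0$ being the tautology $A\in\mathcal{GP}(\xi)$. For the inductive step, present the given $\xi$-exact complex as a chain of $\mathbb{E}$-triangles $K_1\to G_0\to A$, $K_2\to G_1\to K_1,\ \ldots,\ K_n\to G_{n-1}\to K_{n-1}$ in $\xi$ with each $G_i\in\mathcal{GP}(\xi)$. Since $\xi$-$\mathcal{G}{\rm pd}\,A\leqslant n$, the inductive definition supplies a further $\mathbb{E}$-triangle $L_1\to G_0'\to A$ in $\xi$ with $G_0'\in\mathcal{GP}(\xi)$ and $\xi$-$\mathcal{G}{\rm pd}\,L_1\leqslant n-1$. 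Comparing the two triangles over the common object $A$ via Lemma \ref{lem1}(1) produces an object $M$ fitting into $\mathbb{E}$-triangles $K_1\to M\to G_0'$ and $L_1\to M\to G_0$, realizing $y_2^{\ast}\delta_1$ and $y_1^{\ast}\delta_2$ respectively; since these are base changes of $\delta_1$ and $\delta_2$ and $\xi$ is closed under base change by Definition \ref{def:proper class}(2), both triangles lie in $\xi$. Applying Lemma \ref{pro4} to each (their third terms $G_0'$ and $G_0$ being $\xi$-$\mathcal{G}$projective) gives $\xi$-$\mathcal{G}{\rm pd}\,K_1=\xi$-$\mathcal{G}{\rm pd}\,M=\xi$-$\mathcal{G}{\rm pd}\,L_1\leqslant n-1$. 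The truncated complex $K_n\to G_{n-1}\to\cdots\to G_1\to K_1$ is again $\xi$-exact with intermediate terms in $\mathcal{GP}(\xi)$ and terminal object $K_1$ of $\xi$-$\mathcal{G}$projective dimension $\leqslant n-1$, so the induction hypothesis delivers $K_n\in\mathcal{GP}(\xi)$.

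\smallskip

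The step I expect to be the main obstacle is the comparison argument through $M$: one must check carefully that the two triangles built from Lemma \ref{lem1}(1) are genuinely in $\xi$ (tracking the connecting extensions so that closure under base change applies in the correct direction) and that the dimension-shifting via Lemma \ref{pro4} is legitimate, including the degenerate possibilities $K_1=0$ or $L_1=0$, where the relevant object is already $\xi$-$\mathcal{G}$projective and the dimension equality is immediate. Once the equality $\xi$-$\mathcal{G}{\rm pd}\,K_1=\xi$-$\mathcal{G}{\rm pd}\,L_1$ is secured, the rest is a routine reduction to the inductive hypothesis.
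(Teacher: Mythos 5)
Your proof is correct, and your direction $(2)\Rightarrow(1)$ coincides with the paper's argument. The difference lies in $(1)\Rightarrow(2)$. The paper's dimension shift pivots through an auxiliary $\xi$-projective presentation: it takes the $\mathbb{E}$-triangle $K\to G\to A$ furnished by $\xi$-$\mathcal{G}{\rm pd}\,A\leqslant n$ together with a triangle $L\to P\to A$ in $\xi$ with $P\in\mathcal{P}(\xi)$, and uses Lemma \ref{lemb}(1) to produce the Schanuel-type $\mathbb{E}$-triangle $L\to K\oplus P\to G$ in $\xi$; Lemma \ref{pro4} then yields $\xi$-$\mathcal{G}{\rm pd}\,L=\xi$-$\mathcal{G}{\rm pd}\,K\leqslant n-1$, and the same comparison applied to $K_1\to G_0\to A$ gives $\xi$-$\mathcal{G}{\rm pd}\,K_1=\xi$-$\mathcal{G}{\rm pd}\,L$. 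You instead compare the two presentations $K_1\to G_0\to A$ and $L_1\to G_0'\to A$ directly through the object $M$ of Lemma \ref{lem1}(1), whose two triangles $K_1\to M\to G_0'$ and $L_1\to M\to G_0$ realize base changes of the given extensions and hence lie in $\xi$; two applications of Lemma \ref{pro4} then give $\xi$-$\mathcal{G}{\rm pd}\,K_1=\xi$-$\mathcal{G}{\rm pd}\,M=\xi$-$\mathcal{G}{\rm pd}\,L_1\leqslant n-1$, and the induction hypothesis applied to the truncated complex finishes the step. Your route dispenses with the auxiliary projective presentation and the $K\oplus P$ bookkeeping, at the cost of invoking Lemma \ref{lem1}(1) and of policing the side condition $X\neq 0$ of Lemma \ref{pro4} twice rather than once; the paper's route has the mild advantage that its intermediate statement (a $\xi$-syzygy from a $\xi$-projective presentation and a kernel from a $\mathcal{G}$projective presentation have equal dimension) iterates verbatim down the given complex. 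One small point to spell out: in the degenerate case $L_1=0$ the conclusion is not quite ``immediate'' --- there $A\cong G_0'$ lies in $\mathcal{GP}(\xi)$, and one should invoke Theorem \ref{thm1} on the triangle $K_1\to G_0\to A$ to conclude $K_1\in\mathcal{GP}(\xi)$; this is a one-line fix and does not affect the validity of your argument.
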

 \begin{proof} $(1)\Rightarrow (2)$ By assumption, there exists an $\mathbb{E}$-triangle $\xymatrix{K\ar[r]^x&G\ar[r]^y&A\ar@{-->}[r]^\delta&}$ in $\xi$ where $G\in\mathcal{GP}(\xi)$ and $\xi$-$\mathcal{G}{\rm pd} K\leqslant n-1$. Since $\mathcal{C}$ { has} enough $\xi$-projectives, there exists an $\mathbb{E}$-triangle $\xymatrix{L\ar[r]^{x'}&P\ar[r]^{y'}&A\ar@{-->}[r]^{\delta'}&}$ in $\xi$ with $P\in\mathcal{P}(\xi)$. Then there exists a morphism $g$ such that $y'=yg$. { It follows} from Lemma \ref{lemb}(1) that there is a morphism $f$ which gives the following commutative diagram
$$\xymatrix{L\ar[r]^{x'}\ar@{-->}[d]_f&P\ar[d]_g\ar[r]^{y'}&A\ar@{=}[d]\ar@{-->}[r]^{\delta'}&\\
K\ar[r]^x&G\ar[r]^y&A\ar@{-->}[r]^\delta&}$$
such that $\xymatrix{L\ar[r]^{\tiny\begin{bmatrix}-f\\x'\end{bmatrix}\ \ \ }&K\oplus P\ar[r]^{\tiny\ \ \begin{bmatrix}x&g\end{bmatrix}}&G\ar@{-->}[r]^{y^*\delta'}&}$ is an $\mathbb{E}$-triangle. Hence it is an $\mathbb{E}$-triangle in $\xi$ as $\xi$ is closed under base change. Since $G$ and $P$ are $\xi$-$\mathcal{G}$projective, we can conclude that $\xi$-$\mathcal{G}{\rm pd} L=\xi$-$\mathcal{G}{\rm pd} K\leqslant n-1$ by Lemma \ref{pro4}. Consider the $\mathbb{E}$-triangle $\xymatrix{K_1\ar[r]&G_0\ar[r]&A\ar@{-->}[r]&}$ in $\xi$, similarly, we can prove that $\xi$-$\mathcal{G}{\rm pd} K_1=\xi$-$\mathcal{G}{\rm pd} L\leqslant n-1$.   The proof can be completed by induction.

$(2)\Rightarrow (1)$ Since $\mathcal{C}$ has enough $\xi$-projectives, there exists a $\xi$-exact complex
$$\xymatrix{K_n\ar[r]&P_{n-1}\ar[r]&\cdots\ar[r]&P_1\ar[r]&P_0\ar[r]&A}$$ with  all $P_i\in\mathcal{P}(\xi)$. Hence $K_n$ is $\xi$-$\mathcal{G}$projective by hypothesis, which implies  $\xi$-$\mathcal{G}{\rm pd} A\leqslant n$.
\end{proof}

\begin{lem}\label{lem6}   If $\xymatrix{A\ar[r]&B\ar[r]&C\ar@{-->}[r]&}$ is an $\mathbb{E}$-triangle in $\xi$ with $C\in\mathcal{GP}(\xi)$, then it is $\mathcal{C}(-,\widetilde{\mathcal{P}}(\xi))$-exact.
\end{lem}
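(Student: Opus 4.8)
The plan is to work inside the extriangulated category $(\mathcal{C},\mathbb{E}_\xi,\mathfrak{s}_\xi)$ produced by Theorem \ref{thma}, so that every $\mathbb{E}$-triangle in $\xi$ carries the two six-term long exact sequences of \cite[Corollary 3.12]{NP} with $\mathbb{E}$ replaced by $\mathbb{E}_\xi$. In this language $P\in\mathcal{P}(\xi)$ exactly when $\mathbb{E}_\xi(P,-)=0$, and an $\mathbb{E}$-triangle $A\xrightarrow{x}B\xrightarrow{y}C$ with class $\delta$ in $\xi$ is $\mathcal{C}(-,W)$-exact precisely when $\mathcal{C}(y,W)$ is injective and $\delta^{\sharp}_{W}=0$ (equivalently $\mathbb{E}_\xi(y,W)$ is injective), the middle exactness being automatic. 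So I must establish, for every $W\in\widetilde{\mathcal{P}}(\xi)$, a right-exactness and a left-exactness statement; both are proved by induction on $n=\xi\textrm{-}{\rm pd}\,W$, with base case Lemma \ref{lem5}(2). The left-exactness is the main obstacle: in an extriangulated category deflations need not be epimorphisms (already in the triangulated case $\mathcal{C}(y,W)$ can fail to be injective), so it cannot be read off from a naive left-exactness of $\mathcal{C}(-,W)$ and must instead be extracted from the whole complete resolution of $C$.

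The first ingredient is the vanishing $\mathbb{E}_\xi(G,W)=0$ for all $G\in\mathcal{GP}(\xi)$ and $W\in\widetilde{\mathcal{P}}(\xi)$, which yields right-exactness, since $\delta^{\sharp}_{W}$ then has target $\mathbb{E}_\xi(C,W)=0$ and $\mathcal{C}(x,W)$ is onto. I would prove it by induction on $n$. For $n=0$, a class $\zeta\in\mathbb{E}_\xi(G,W)$ with $W\in\mathcal{P}(\xi)$ is realized by a $\xi$-triangle $W\to E\to G$; by Lemma \ref{lem5}(2) it is $\mathcal{C}(-,W)$-exact, so the inflation $W\to E$ is a section and $\zeta=0$. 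For $n\ge 1$ choose a $\xi$-triangle $G_{1}\xrightarrow{c}P_{0}\to G$ with $P_{0}\in\mathcal{P}(\xi)$ and $G_{1}\in\mathcal{GP}(\xi)$; the first long exact sequence identifies $\mathbb{E}_\xi(G,W)$ with $\operatorname{coker}\bigl(\mathcal{C}(P_{0},W)\xrightarrow{c^{*}}\mathcal{C}(G_{1},W)\bigr)$, so it suffices to factor any $\varphi\colon G_{1}\to W$ through $c$. Picking a $\xi$-triangle $W'\to Q\xrightarrow{t}W$ with $Q\in\mathcal{P}(\xi)$ and $\xi\textrm{-}{\rm pd}\,W'\le n-1$, the inductive hypothesis gives $\mathbb{E}_\xi(G_{1},W')=0$, so $\varphi$ lifts along $t$ to $\varphi'\colon G_{1}\to Q$; since $G\in\mathcal{GP}(\xi)$, the triangle $G_{1}\to P_{0}\to G$ is $\mathcal{C}(-,Q)$-exact by Lemma \ref{lem5}(2), so $\varphi'$ extends along $c$ to $\psi'\colon P_{0}\to Q$, and $\psi=t\psi'$ satisfies $\psi c=\varphi$.

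The second ingredient addresses the left-exactness. Fix a complete $\xi$-projective resolution $\mathbf{P}\colon\cdots\to P_{1}\to P_{0}\to P_{-1}\to\cdots$ with $C$ a syzygy, and let $\theta_{n}\colon K_{n+1}\xrightarrow{g_{n}}P_{n}\xrightarrow{f_{n}}K_{n}$ be its defining $\xi$-triangles. I would first show that the complex of abelian groups $\mathcal{C}(\mathbf{P},W)$ is exact for every $W\in\widetilde{\mathcal{P}}(\xi)$, again by induction on $\xi\textrm{-}{\rm pd}\,W$. Applying $\mathcal{C}(P_{i},-)$ to the $\xi$-triangle $W'\to Q\to W$ is exact for each projective term $P_{i}$ (this is the definition of $\xi$-projectivity), whence a short exact sequence of complexes $0\to\mathcal{C}(\mathbf{P},W')\to\mathcal{C}(\mathbf{P},Q)\to\mathcal{C}(\mathbf{P},W)\to0$; its outer terms are exact (the middle by completeness of $\mathbf{P}$, the left by induction), so the homology long exact sequence forces $\mathcal{C}(\mathbf{P},W)$ to be exact. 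This is exactly the step that is unavailable in an abelian or exact category for free and that encodes the ``total acyclicity'' needed for injectivity.

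With both ingredients in hand I extract the assertion for the $\theta_{n}$: the vanishing makes each $g_{n}^{*}$ surjective, and for $\psi\in\mathcal{C}(K_{n},W)$ with $\psi f_{n}=0$, surjectivity of $g_{n-1}^{*}$ writes $\psi=\chi g_{n-1}$, the identity $\chi d_{n}=\psi f_{n}=0$ together with exactness of $\mathcal{C}(\mathbf{P},W)$ at $\mathcal{C}(P_{n-1},W)$ writes $\chi=\omega d_{n-1}$, and then $f_{n-1}g_{n-1}=0$ gives $\psi=\omega g_{n-2}(f_{n-1}g_{n-1})=0$; hence every $\theta_{n}$ is $\mathcal{C}(-,W)$-exact. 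Finally, for the given $A\xrightarrow{x}B\xrightarrow{y}C$ I reduce to the syzygy triangle $K_{1}\to P_{0}\to C$ just as in the proof of Lemma \ref{lem5}(2): forming the pullback diagram of Lemma \ref{lem1}(1) against $K_{1}\to P_{0}\xrightarrow{f_{0}}C$, the resulting $\mathbb{E}$-triangle $A\to M\to P_{0}$ realizes $f_{0}^{*}\delta\in\mathbb{E}_\xi(P_{0},A)=0$, hence splits and is $\mathcal{C}(-,W)$-exact, and Lemma \ref{lem5}(1) transfers $\mathcal{C}(-,W)$-exactness from $K_{1}\to P_{0}\to C$ to $A\to B\to C$. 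As $W\in\widetilde{\mathcal{P}}(\xi)$ was arbitrary, the triangle is $\mathcal{C}(-,\widetilde{\mathcal{P}}(\xi))$-exact.
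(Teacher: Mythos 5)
Your argument is correct, but it is organized quite differently from the paper's. The paper also reduces to the syzygy $\mathbb{E}$-triangle $K\to P\to C$ and inducts on $\xi\textrm{-}{\rm pd}\,L$, but it proves injectivity of $\mathcal{C}(f,L)$ and surjectivity of $\mathcal{C}(g,L)$ simultaneously: it compares the sequence for $L$ with the (inductively known) sequences for $M$ and $Q$ coming from a $\xi$-triangle $M\to Q\to L$, assembles a $3\times 3$ diagram of Hom-groups, produces the needed lift of any $h\colon C\to L$ by hand via $\xi$-projectivity of $P$, $(\mathrm{ET3})^{\rm op}$ and \cite[Corollary 3.5]{NP}, and closes with the $3\times 3$ lemma of \cite[Corollary 3.6]{B"u}. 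You instead decouple the two halves. Your first ingredient, $\mathbb{E}_{\xi}(\mathcal{GP}(\xi),\widetilde{\mathcal{P}}(\xi))=0$ by dimension shifting in $(\mathcal{C},\mathbb{E}_{\xi},\mathfrak{s}_{\xi})$, isolates exactly the Ext-vanishing that Theorem \ref{thm:5.4} later extracts from Lemma \ref{lem6}, and makes the surjectivity half a one-line consequence of the long exact sequence; this is arguably the cleaner and more reusable formulation. Your second ingredient, the termwise short exact sequence of complexes $0\to\mathcal{C}(\mathbf{P},W')\to\mathcal{C}(\mathbf{P},Q)\to\mathcal{C}(\mathbf{P},W)\to0$ forcing acyclicity of $\mathcal{C}(\mathbf{P},W)$, is a genuinely different mechanism for the injectivity half: it uses the \emph{whole} complete resolution of $C$ rather than a single syzygy triangle, whereas the paper never needs to leave the triangle $K\to P\to C$. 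The price you pay is having to re-extract the statement about the individual conflations $\theta_n$ from exactness of the total Hom-complex (which you do correctly, using $f_{n-1}g_{n-1}=0$); the payoff is that all the explicit diagram chasing is replaced by standard homological algebra of complexes of abelian groups. Both proofs finish identically, transferring $\mathcal{C}(-,W)$-exactness from $K_1\to P_0\to C$ to $A\to B\to C$ via Lemma \ref{lem1}(1), the splitting of the pullback triangle over $P_0$, and Lemma \ref{lem5}(1). I find no gap in your argument.
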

\begin{proof} Since $C\in\mathcal{GP}(\xi)$, then there is an $\mathbb{E}$-triangle $\xymatrix{K\ar[r]^g&P\ar[r]^f&C\ar@{-->}[r]^\delta&}$ in $\xi$ with $P\in\mathcal{P}(\xi)$ and $K\in\mathcal{GP}(\xi)$. Next we claim that the $\mathbb{E}$-triangle $\xymatrix{K\ar[r]^g&P\ar[r]^f&C\ar@{-->}[r]^\delta&}$ is $\mathcal{C}(-,\widetilde{\mathcal{P}}(\xi))$-exact, that is, the sequence of abelian groups $$\xymatrix{(*)&0\ar[r]&\mathcal{C}(C, L)\ar[r]^{\mathcal{C}(f, L)}&\mathcal{C}(P, L)\ar[r]^{\mathcal{C}(g, L)}&\mathcal{C}(K, L)\ar[r]&0}$$ is exact for any $L\in\mathcal{C}$ with $\xi$-${\rm pd}L=n$. If $n=0$, then the sequence $(*)$ is exact by Lemma \ref{lem5}(2). Assume the sequence $(*)$ is exact for any $L\in\mathcal{C}$ with $\xi$-${\rm pd}L=n-1$. Now consider the case of $\xi$-${\rm pd}L=n$. Suppose that $\xymatrix{M\ar[r]^x&Q\ar[r]^y&L\ar@{-->}[r]^\rho&}$ is an $\mathbb{E}$-triangle in $\xi$ with $Q\in\mathcal{P}(\xi)$ and $\xi$-${\rm pd}M=n-1$. Hence the sequence of abelian groups $\xymatrix{0\ar[r]&\mathcal{C}(C, M)\ar[r]^{\mathcal{C}(f, M)}&\mathcal{C}(P, M)\ar[r]^{\mathcal{C}(g, M)}&\mathcal{C}(K, M)\ar[r]&0}$ is exact by induction. Since  the functor $\mathcal{C}(-, -)$ is biadditive functor, we have following commutative diagram.
$$\xymatrix{&0\ar[d]&0\ar[d]&\\
0\ar@{-->}[r]&\mathcal{C}(C, M)\ar[r]^{\mathcal{C}(C, x)}\ar[d]_{\mathcal{C}(f, M)}&\mathcal{C}(C, Q)\ar[d]^{\mathcal{C}(f, Q)}\ar[r]^{\mathcal{C}(C, y)}&\mathcal{C}(C, L)\ar[d]^{\mathcal{C}(f, L)}\ar@{-->}[r]&0\\
0\ar[r]&\mathcal{C}(P, M)\ar[r]^{\mathcal{C}(P, x)}\ar[d]_{\mathcal{C}(g, M)}&\mathcal{C}(P, Q)\ar[d]^{\mathcal{C}(g, Q)}\ar[r]^{\mathcal{C}(P, y)}&\mathcal{C}(P, L)\ar[d]^{ \mathcal{C}(g, L)}\ar[r]&0\\
0\ar@{-->}[r]&\mathcal{C}(K, M)\ar[r]^{\mathcal{C}(K, x)}\ar[d]&\mathcal{C}(K,Q)\ar[r]^{\mathcal{C}(K, y)}\ar[d]&{ \mathcal{C}(K,L)}\ar@{-->}[r]&0\\
&0&0&}$$
It is easy to see that the morphism $\mathcal{C}(C, x)$ is monic. For any morphism $h: C\rightarrow L$, there exists a morphism $h_1: P\rightarrow Q$ such that $hf=yh_1$. It follows from (ET3)$^{\rm op}$ that there exists a morphism $h_2$ which gives the following commutative diagram.
 $$\xymatrix{K\ar[r]^g\ar@{-->}[d]^{h_2}&P\ar[r]^f\ar[d]^{h_1}&C\ar@{-->}[r]^\delta\ar[d]^h&\\
 M\ar[r]^x&Q\ar[r]^y&L\ar@{-->}[r]^{\rho}&}$$
 Since $\mathcal{C}(g, M): \mathcal{C}(P, M)\rightarrow \mathcal{C}(K, M)$ is epic,  $h_2$ factors through $g$. Hence $h$ factors through $y$ by \cite[Corollary 3.5]{NP}. That is, $\mathcal{C}(C, y)$ is epic, which implies
that the sequence of abelian groups $\xymatrix{0\ar[r]&\mathcal{C}(C, M)\ar[r]^{\mathcal{C}(C, x)}&\mathcal{C}(C, Q)\ar[r]^{\mathcal{C}(C, y)}&\mathcal{C}(C, L)\ar[r]&0}$ is exact. Similarly one can prove that the sequence of abelian groups $\xymatrix{0\ar[r]&\mathcal{C}(K, M)\ar[r]^{\mathcal{C}(K, x)}&\mathcal{C}(K, Q)\ar[r]^{\mathcal{C}(K, y)}&\mathcal{C}(K, L)\ar[r]&0}$ is exact as $K\in\mathcal{GP}(\xi)$. It is straightforward to prove that the
 sequence $(*)$ is exact by  Lemma $3\times 3$ { in \cite[Corollary 3.6]{B"u}}. It follows from { Lemma \ref{lem5}(1)} that the $\mathbb{E}$-triangle $\xymatrix{A\ar[r]&B\ar[r]&C\ar@{-->}[r]&}$ is $\mathcal{C}(-,\widetilde{\mathcal{P}}(\xi))$-exact because any $\mathbb{E}$-triangle in $\xi$ which { ends} in $P$ is split.
\end{proof}

\begin{prop}\label{pro6} There is an inequality $\xi$-$\mathcal{G}{\rm pd} A\leqslant \xi$-${\rm pd} A$, and the equality holds if $\xi$-${\rm pd} A<\infty$.
\end{prop}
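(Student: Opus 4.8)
The plan is to prove the inequality first and then upgrade it to an equality under the finiteness hypothesis. For the inequality $\xi$-$\mathcal{G}{\rm pd}\,A\leqslant\xi$-${\rm pd}\,A$ I would begin by recording that $\mathcal{P}(\xi)\subseteq\mathcal{GP}(\xi)$: for $P\in\mathcal{P}(\xi)$ the complex $\cdots\to 0\to P\xrightarrow{1}P\to 0\to\cdots$ is a $\xi$-exact complex all of whose defining $\mathbb{E}$-triangles are split, hence $\mathcal{C}(-,\mathcal{P}(\xi))$-exact, and it realizes $P$ as one of its syzygies, so $P$ is $\xi$-$\mathcal{G}$projective. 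With this in hand I argue by induction on $n=\xi$-${\rm pd}\,A$ (the case $n=\infty$ being vacuous). If $n=0$ then $A\in\mathcal{P}(\xi)\subseteq\mathcal{GP}(\xi)$, so $\xi$-$\mathcal{G}{\rm pd}\,A=0$. If $n>0$, pick an $\mathbb{E}$-triangle $K\to P\to A$ in $\xi$ with $P\in\mathcal{P}(\xi)$ and $\xi$-${\rm pd}\,K\leqslant n-1$; the induction hypothesis gives $\xi$-$\mathcal{G}{\rm pd}\,K\leqslant n-1$, and since $P\in\mathcal{P}(\xi)\subseteq\mathcal{GP}(\xi)$ the defining recursion for $\xi$-$\mathcal{G}$projective dimension yields $\xi$-$\mathcal{G}{\rm pd}\,A\leqslant n$.

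The crux of the equality, and the step I expect to be the main obstacle, is the claim that \emph{any $G\in\mathcal{GP}(\xi)$ with $\xi$-${\rm pd}\,G<\infty$ is already $\xi$-projective.} To prove this, note that $G\in\mathcal{GP}(\xi)$ supplies a $\mathcal{C}(-,\mathcal{P}(\xi))$-exact $\mathbb{E}$-triangle $G\xrightarrow{g}P\to H$ in $\xi$ with $P\in\mathcal{P}(\xi)$ and $H\in\mathcal{GP}(\xi)$, coming from the upward step of a complete $\xi$-projective resolution realizing $G$ as a syzygy. By Lemma \ref{lem6} this $\mathbb{E}$-triangle is $\mathcal{C}(-,\widetilde{\mathcal{P}}(\xi))$-exact, and since $G\in\widetilde{\mathcal{P}}(\xi)$ by hypothesis, applying $\mathcal{C}(-,G)$ produces a short exact sequence in which $\mathcal{C}(g,G)\colon\mathcal{C}(P,G)\to\mathcal{C}(G,G)$ is surjective. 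Lifting ${\rm id}_G$ along $\mathcal{C}(g,G)$ gives $r\colon P\to G$ with $rg={\rm id}_G$, so $g$ is a section and the $\mathbb{E}$-triangle splits; thus $G$ is a direct summand of $P$, and $G\in\mathcal{P}(\xi)$ because $\mathcal{P}(\xi)$ is closed under direct summands.

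Finally I assemble the equality for $m:=\xi$-${\rm pd}\,A<\infty$. Set $g:=\xi$-$\mathcal{G}{\rm pd}\,A$, so $g\leqslant m$ by the first part. Using that $\mathcal{C}$ has enough $\xi$-projectives, choose a $\xi$-projective resolution $\cdots\to P_1\to P_0\to A$ and let $K_g$ be its $g$-th $\xi$-syzygy. Since each $P_i$ is $\xi$-projective, hence $\xi$-$\mathcal{G}$projective, Proposition \ref{pro5} forces $K_g\in\mathcal{GP}(\xi)$. Comparing this resolution with a finite $\xi$-projective resolution of $A$ of length $m$ through Schanuel's lemma shows that $K_g$ agrees with the $g$-th syzygy of the finite resolution up to $\xi$-projective summands, whence $\xi$-${\rm pd}\,K_g\leqslant m-g<\infty$, i.e. $K_g\in\widetilde{\mathcal{P}}(\xi)$. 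The claim of the previous paragraph now gives $K_g\in\mathcal{P}(\xi)$, so the truncated resolution $K_g\to P_{g-1}\to\cdots\to P_0\to A$ witnesses $\xi$-${\rm pd}\,A\leqslant g$. Together with $g\leqslant m$ this forces $\xi$-$\mathcal{G}{\rm pd}\,A=\xi$-${\rm pd}\,A$. The one delicate conversion is in the middle paragraph, where finiteness of $\xi$-${\rm pd}\,G$ is turned, via Lemma \ref{lem6}, into the splitting that collapses a $\xi$-$\mathcal{G}$projective object of finite $\xi$-projective dimension down to a $\xi$-projective object.
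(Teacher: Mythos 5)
Your proof is correct and follows essentially the same route as the paper: both arguments reduce to showing that the $g$-th syzygy of a $\xi$-projective resolution of $A$ is $\xi$-$\mathcal{G}$projective (via Proposition \ref{pro5}) and of finite $\xi$-projective dimension, and both then invoke Lemma \ref{lem6} to split an $\mathbb{E}$-triangle and conclude that this syzygy lies in $\mathcal{P}(\xi)$. The only cosmetic difference is that the paper argues by contradiction and splits the syzygy triangle $K_{n+1}\to P_n\to K_n$ directly, whereas you isolate the cleaner intermediate statement $\mathcal{GP}(\xi)\cap\widetilde{\mathcal{P}}(\xi)=\mathcal{P}(\xi)$ and split the coresolution triangle $G\to P\to H$ instead.
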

\begin{proof}   It is clear that $\xi$-$\mathcal{G}{\rm pd} A\leqslant \xi$-${\rm pd} A$. Set $\xi$-$\mathcal{G}{\rm pd} A=n$
and $\xi$-${\rm pd} A=m<\infty$, then $n\leqslant m$. Suppose $n<m$, there exists a $\xi$-projective resolution
$\xymatrix@C=1.5em{\cdots\ar[r]& P_1\ar[r]&P_0\ar[r] &A}$ of $A$  with all $P_i\in \mathcal{P}(\xi)$. For each integer $i$,
there exists an $\mathbb{E}$-triangle $\xymatrix@C=1.5em{K_{i+1}\ar[r]& P_i\ar[r]& K_i\ar@{-->}[r]&}$  in $\xi$. It follows from Proposition \ref{pro5} that
$K_n\in\mathcal{GP}(\xi)$,  and $K_{n+1}\in\widetilde{\mathcal{P}}(\xi)$.  Hence the $\mathbb{E}$-triangle
$\xymatrix@C=1.5em{K_{n+1}\ar[r]& P_n\ar[r]& K_n\ar@{-->}[r]&}$  is split since it is $\mathcal{C}(-,\widetilde{\mathcal{P}}(\xi))$-exact by Lemma \ref{lem6}, which implies  $K_{n}\in\mathcal{P}(\xi)$,
a contradiction. So $\xi$-$\mathcal{G}{\rm pd} A= \xi$-${\rm pd} A$.
\end{proof}

\begin{prop}\label{thm3} { Let $A$ be a nonzero object in $(\mathcal{C}, \mathbb{E}, \mathfrak{s})$. If $\xi$-$\mathcal{G}{\rm pd} A=n<\infty$}, then there exist $\mathbb{E}$-triangles $\xymatrix@C=1.5em{K\ar[r]&G\ar[r]&A\ar@{-->}[r]&
}$ and
$\xymatrix@C=1.5em{A\ar[r]&L\ar[r]&G'\ar@{-->}[r]&}$ in $\xi$ such that $G,G'\in\mathcal{GP}(\xi)$,
$\xi$-${\rm pd} K\leqslant n-1\ ($if $n=0$, this should be interpreted as $K=0)$ and $\xi$-${\rm pd} L\leqslant n$.
\end{prop}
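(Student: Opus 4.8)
The plan is to produce the two $\mathbb{E}$-triangles in order: first I construct the ``$\mathcal{GP}(\xi)$-cover'' $K\to G\to A$ by induction on $n=\xi$-$\mathcal{G}\mathrm{pd}\,A$, and then I read off the ``finite-dimensional envelope'' $A\to L\to G'$ from it by a single application of (ET4). Throughout I work inside the extriangulated category $(\mathcal{C},\mathbb{E}_\xi,\mathfrak{s}_\xi)$ of Theorem \ref{thma}, so that every $\mathbb{E}$-triangle produced by (ET4), by Lemma \ref{lem1}, or by base/cobase change automatically lies in $\xi$. I will freely use that $\mathcal{P}(\xi)\subseteq\mathcal{GP}(\xi)$ and that, for an $\mathbb{E}$-triangle $W\to P\to V$ in $\xi$ with $P\in\mathcal{P}(\xi)$, one has $\xi$-$\mathrm{pd}\,V\le \xi$-$\mathrm{pd}\,W+1$ (since $W$ is then a first $\xi$-syzygy of $V$).

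For the first $\mathbb{E}$-triangle I argue by induction on $n$. If $n=0$ then $A\in\mathcal{GP}(\xi)$ and the split $\mathbb{E}$-triangle $0\to A\to A$ (in $\xi$ because $\Delta_0\subseteq\xi$) works with $K=0$. Suppose $n\ge 1$. Using enough $\xi$-projectives choose an $\mathbb{E}$-triangle $K_1\xrightarrow{x}P_0\to A$ in $\xi$ with $P_0\in\mathcal{P}(\xi)$; comparing syzygies via Proposition \ref{pro5} shows $\xi$-$\mathcal{G}\mathrm{pd}\,K_1=n-1$ and $K_1\neq 0$. By the induction hypothesis applied to $K_1$ there is an $\mathbb{E}$-triangle $K''\xrightarrow{u}G''\xrightarrow{w}K_1$ in $\xi$ with $G''\in\mathcal{GP}(\xi)$ and $\xi$-$\mathrm{pd}\,K''\le n-2$. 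The heart of the argument is to combine $x$ and $w$. They do not form a pushout span — $w$ is a deflation onto $K_1$ while $x$ is an inflation out of it — so I first convert $w$ into an inflation out of $K_1$. As $G''\in\mathcal{GP}(\xi)$, its complete $\xi$-projective resolution supplies an $\mathbb{E}$-triangle $G''\xrightarrow{s}P'\to G'''$ in $\xi$ with $P'\in\mathcal{P}(\xi)$ and $G'''\in\mathcal{GP}(\xi)$. Applying (ET4) to the composable inflations $K''\xrightarrow{u}G''\xrightarrow{s}P'$ yields an object $E$ together with $\mathbb{E}$-triangles $K''\to P'\to E$ and $K_1\xrightarrow{d}E\to G'''$ in $\xi$; the first gives $\xi$-$\mathrm{pd}\,E\le \xi$-$\mathrm{pd}\,K''+1\le n-1$, and the second exhibits the desired inflation $d\colon K_1\to E$ with Gorenstein-projective cokernel. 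Now $x\colon K_1\to P_0$ and $d\colon K_1\to E$ are two inflations with common source, so Lemma \ref{lem1}(2) produces an object $G$ with $\mathbb{E}$-triangles $P_0\to G\to G'''$ and $E\to G\to A$ in $\xi$. Since $P_0\in\mathcal{P}(\xi)\subseteq\mathcal{GP}(\xi)$ and $G'''\in\mathcal{GP}(\xi)$, Theorem \ref{thm1} gives $G\in\mathcal{GP}(\xi)$, while $\xi$-$\mathrm{pd}\,E\le n-1$. Taking $K:=E$ completes the induction.

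For the second $\mathbb{E}$-triangle I use the first one, $K\to G\to A$ with $G\in\mathcal{GP}(\xi)$ and $\xi$-$\mathrm{pd}\,K\le n-1$. From the complete $\xi$-projective resolution of $G$ choose an $\mathbb{E}$-triangle $G\to P\to G_1$ in $\xi$ with $P\in\mathcal{P}(\xi)$ and $G_1\in\mathcal{GP}(\xi)$, and apply (ET4) to the composable inflations $K\to G\to P$; this yields $\mathbb{E}$-triangles $K\to P\to L$ and $A\to L\to G_1$ in $\xi$. From $K\to P\to L$ with $P\in\mathcal{P}(\xi)$ we get $\xi$-$\mathrm{pd}\,L\le \xi$-$\mathrm{pd}\,K+1\le n$, and $A\to L\to G_1$ with $G':=G_1\in\mathcal{GP}(\xi)$ is the required triangle. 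The main obstacle is precisely the combination step in the first part: the naive attempt to paste the induction datum onto the syzygy triangle fails on directionality, and the decisive maneuver is the preliminary octahedron that pushes $G''$ into a projective, trading the deflation $G''\to K_1$ for an inflation $K_1\to E$ out of an object of finite $\xi$-projective dimension. The only routine points that remain are the syzygy bookkeeping for $\xi$-$\mathrm{pd}$ and $\xi$-$\mathcal{G}\mathrm{pd}$ (through Proposition \ref{pro5}) and the verification, automatic once one works in $(\mathcal{C},\mathbb{E}_\xi,\mathfrak{s}_\xi)$, that all the triangles constructed lie in $\xi$.
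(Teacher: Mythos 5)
Your proof is correct. The second $\mathbb{E}$-triangle is obtained exactly as in the paper (an octahedron on the composable inflations $K\to G\to P$ with $P$ a $\xi$-projective embedding of $G$), and the first one uses the same two essential ingredients as the paper's argument: cobase change of the syzygy triangle $K_1\to P_0\to A$ along an inflation $K_1\to E$ whose cone is $\xi$-$\mathcal{G}$projective and whose target has $\xi\textrm{-pd}\leqslant n-1$, followed by Theorem \ref{thm1} applied to $P_0\to G\to G'''$ to see $G\in\mathcal{GP}(\xi)$. The only real difference is organizational: the paper's induction carries \emph{both} triangles of the statement simultaneously, so that the needed inflation $K'\to K$ out of the syzygy comes for free from the induction hypothesis, whereas your induction carries only the first triangle and you rebuild the second one for $K_1$ on the fly via the preliminary octahedron $K''\to G''\to P'$ --- which is precisely the paper's ``second triangle'' construction applied one level down. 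So what you present as the decisive new maneuver is in fact the same step the paper performs at the end of each inductive stage; your version trades a stronger induction hypothesis for one extra application of (ET4) per step. Your framing inside $(\mathcal{C},\mathbb{E}_\xi,\mathfrak{s}_\xi)$ to keep all constructed triangles in $\xi$ is legitimate by Theorem \ref{thma}, the check that $K_1\neq 0$ before invoking the induction hypothesis is the right precaution, and the boundary cases work out (for $n=0$ the triangle $0\to P\to L$ splits, so $\xi\textrm{-pd}\,L=0$ rather than the literal bound $\xi\textrm{-pd}\,K+1$; for $n=1$ the induction datum degenerates to $K''=0$ and $E\cong P'$, giving $\xi\textrm{-pd}\,E=0$ as required).
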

\begin{proof}  If $n=0$, that is  $A\in\mathcal{GP}(\xi)$. Hence there exists an $\mathbb{E}$-triangle $\xymatrix@C=1.5em{0\ar[r]&A\ar[r]&A\ar@{-->}[r]^0&}$ in
$\xi$ such that $A\in\mathcal{GP}(\xi)$ and $K=0$. On the other hand, there exists an $\mathbb{E}$-triangle $A\to L\to G'\dashrightarrow$ in $\xi$
with $L\in\mathcal{P}(\xi)$ and $G'\in\mathcal{GP}(\xi)$.

If $n=1$, then there exists an $\mathbb{E}$-triangle $\xymatrix@C=1.5em{G_1\ar[r]&G_0\ar[r]&A\ar@{-->}[r]&}$  in $\xi$ with $G_0,G_1\in\mathcal{GP}(\xi)$.
Consider the $\mathbb{E}$-triangle $\xymatrix@C=1.5em{G_1\ar[r]&K\ar[r]&H\ar@{-->}[r]&}$ in $\xi$ with $K\in\mathcal{P}(\xi)$ and $H\in\mathcal{GP}(\xi)$.
Hence we have the following commutative diagram
$$\xymatrix{
  G_1 \ar[d] \ar[r] & G_0 \ar[d] \ar[r] & A \ar@{=}[d]\ar@{-->}[r]& \\
  K \ar[d] \ar[r] & G \ar[d] \ar[r] & A \ar@{-->}[r]& \\
  H \ar@{-->}[d]\ar@{=}[r] & H\ar@{-->}[d] & &  \\&&& }
$$
where all rows and columns are $\mathbb{E}$-triangles in $\xi$. It follows from Theorem \ref{thm1} that $G\in\mathcal{GP}(\xi)$,
and the middle row in the above diagram is the first required $\mathbb{E}$-triangle. Since $G\in\mathcal{GP}(\xi)$, there exists an $\mathbb{E}$-triangle $\xymatrix@C=1.5em{G\ar[r]&P\ar[r]&G'\ar@{-->}[r]&}$ in $\xi$ with $P\in\mathcal{P}(\xi)$ and $G'\in\mathcal{GP}(\xi)$.
Hence there exists the  following commutative diagram
$$\xymatrix{
  K \ar@{=}[d] \ar[r] & G \ar[d] \ar[r] & A \ar[d]\ar@{-->}[r]& \\
  K  \ar[r] & P \ar[d] \ar[r] & L \ar[d]\ar@{-->}[r]& \\
  &G'\ar@{-->}[d]\ar@{=}[r] & G' \ar@{-->}[d]&\\&&&    }
$$
{ where} all rows and columns are $\mathbb{E}$-triangles in $\xi$. It is clear that $\xi$-${\rm pd} L\leqslant 1$, and the third column in the above diagram
is the second required $\mathbb{E}$-triangle.

Assume that the results hold for { $n-1$, where $n\geqslant 2$}. Since $\xi$-$\mathcal{G}{\rm pd} A=n<\infty$, we have an $\mathbb{E}$-triangle
$\xymatrix@C=1.5em{K'\ar[r]&G_0\ar[r]&A\ar@{-->}[r]&}$  in $\xi$ with $G_0\in\mathcal{GP}(\xi)$ and  $\xi$-$\mathcal{G}{\rm pd}K'= n-1$.
Then there exists an $\mathbb{E}$-triangle $\xymatrix@C=1.5em{K'\ar[r]&K\ar[r]&G_1\ar@{-->}[r]&}$  in $\xi$ with $G_1\in\mathcal{GP}(\xi)$ and
$\xi$-${\rm pd}K\leqslant n-1$ by induction hypothesis on $K'$. Hence one can get the following commutative diagram
$$\xymatrix{
  K' \ar[d] \ar[r] & G_0 \ar[d] \ar[r] & A \ar@{=}[d]\ar@{-->}[r]& \\
  K \ar[d] \ar[r] & G \ar[d] \ar[r] & A \ar@{-->}[r]& \\
  G_1\ar@{-->}[d] \ar@{=}[r] & G_1\ar@{-->}[d] &&\\
  &&&    }
$$
{ where} all rows and columns are $\mathbb{E}$-triangles in $\xi$. It follows from Theorem \ref{thm1}  that $G\in\mathcal{GP}(\xi)$,
and the middle row in this diagram is the first required $\mathbb{E}$-triangle.
Since $G\in\mathcal{GP}(\xi)$, there is an $\mathbb{E}$-triangle $\xymatrix@C=1.5em{G\ar[r]&P\ar[r]&G'\ar@{-->}[r]&}$ in $\xi$ with
$P\in\mathcal{P}(\xi)$ and $G'\in\mathcal{GP}(\xi)$. Then one can get  the following commutative diagram
$$\xymatrix{
K \ar@{=}[d] \ar[r] & G\ar[d] \ar[r] & A \ar[d]\ar@{-->}[r]& \\
K  \ar[r] & P \ar[d] \ar[r] & L \ar[d]\ar@{-->}[r]& \\
&G'\ar@{-->}[d]\ar@{=}[r] & G' \ar@{-->}[d]&\\
&&&    }
$$
where all rows and columns are conflations in $\xi$, and $\xi$-${\rm pd} L\leqslant\mathcal{E}$-${\rm pd}K+1=n$. Hence the third column in the above diagram
is the second required $\mathbb{E}$-triangle.
\end{proof}

\subsection{Admissible model structure}

\begin{definition} \emph{(see \cite[Definition 4.1]{NP})}\label{df:cotorsion pair}
{\rm Let $\mathcal{U}$, $\mathcal{V}$ $\subseteq$ $\mathcal{C}$ be a pair of full additive subcategories, closed
under isomorphisms and direct summands. The pair ($\mathcal{U}$, $\mathcal{V}$) is called a {\it cotorsion
pair} on $\mathcal{C}$ if it satisfies the following conditions:

(1) $\mathbb{E}(\mathcal{U}, \mathcal{V})=0$;

(2) For any $C \in{\mathcal{C}}$, there exists a conflation $V^{C}\rightarrow U^{C}\rightarrow C$ satisfying
$U^{C}\in{\mathcal{U}}$ and $V^{C}\in{\mathcal{V}}$;

(3) For any $C \in{\mathcal{C}}$ , there exists a conflation $C\rightarrow V_{C} \rightarrow U_{C}$ satisfying
$U_{C}\in{\mathcal{U}}$ and $V_{C}\in{\mathcal{V}}$.}
\end{definition}

\begin{definition} \emph{(see \cite[Definition 4.2]{NP})}\label{df:cotorsion pair}
{\rm Let $\mathcal{X}$, $\mathcal{Y}$ $\subseteq$ $\mathcal{C}$ be any pair of full subcategories closed under
isomorphisms. Define full subcategories $\textrm{Cone}(\mathcal{X},\mathcal{Y})$ and $\textrm{CoCone}(\mathcal{X},\mathcal{Y})$ of
$\mathcal{C}$ as follows. These are closed under isomorphisms.

(1) $C$ belongs to $\textrm{Cone}(\mathcal{X},\mathcal{Y})$ if and only if it admits a conflation
$X \rightarrow Y \rightarrow C$ satisfying $X\in{\mathcal{X}}$ and $Y\in{\mathcal{Y}}$;

(2) $C$ belongs to $\textrm{CoCone}(\mathcal{X},\mathcal{Y})$  if and only if it admits a conflation
$C\rightarrow X \rightarrow Y$ satisfying $X\in{\mathcal{X}}$ and $Y\in{\mathcal{Y}}$.}
\end{definition}

\begin{definition} \emph{(see \cite[Definition 5.1]{NP})}\label{df:hovey-cotorsion pair}
{\rm Let ($\mathcal{S}$, $\mathcal{T}$) and ($\mathcal{U}$, $\mathcal{V}$) be cotorsion pairs on $\mathcal{C}$. Then $\mathcal{P}=((\mathcal{S}$, $\mathcal{T}$), ($\mathcal{U}$, $\mathcal{V}$)) is called a {\it twin cotorsion pair} if it satisfies
$\mathbb{E}(\mathcal{S}, \mathcal{V})=0$. Moreover, $\mathcal{P}$ is called a {\it Hovey twin cotorsion pair} if it satisfies
$\textrm{Cone}(\mathcal{V},\mathcal{S})$ = $\textrm{CoCone}(\mathcal{V},\mathcal{S})$.}
\end{definition}

In { \cite[Section 5]{NP}} Nakaoka and Palu laid out a correspondence between (nice enough) admissible model structures on extriangulated categories $\mathcal{C}$ and Hovey twin cotorsion pairs on $\mathcal{C}$. Essentially, an  admissible model structure on $\mathcal{C}$ is a Hovey twin cotorsion pair $\mathcal{P}=((\mathcal{S}$, $\mathcal{T}$), ($\mathcal{U}$, $\mathcal{V}$)) on $\mathcal{C}$. And an  {\it admissible model structure} on $\mathcal{C}$ is determined by the above cotorsion pairs in the following way:

(1) $f$ is a cofibration if it is an inflation with $\textrm{Cone}(f)\in{\mathcal{U}}$;

(2) $f$ is an acyclic cofibration if it is an inflation with $\textrm{Cone}(f)\in{\mathcal{S}}$;

(3) $f$ is a fibration if it is a deflation with $\textrm{CoCone}(f)\in{\mathcal{T}}$;

(4) $f$ is an acyclic fibration if it is a deflation with $\textrm{CoCone}(f)\in{\mathcal{V}}$;

(5) $f$ is a weak equivalence if there exist an acyclic fibration $g$ and an acyclic cofibration  $h$ such that $f=gh$.

The above correspondence makes it clear that an admissible model structure can be
succinctly represented by a Hovey twin cotorsion pair $\mathcal{P}=((\mathcal{S}$, $\mathcal{T}$), ($\mathcal{U}$, $\mathcal{V}$)). By a slight abuse of language we often refer to such a triple as an admissible model structure. This result is inspired
from \cite{Gillespie,HCc,HC}, where the case of exact categories is studied in more
details.

Let  $(\mathcal{C}, \mathbb{E}, \mathfrak{s})$ be an extriangulated category and $\xi$ a proper class, then $(\mathcal{C}, \mathbb{E}_\xi, \mathfrak{s}_\xi)$ is an extriangulated category by Theorem \ref{thma} where  $\mathbb{E}_\xi:=\mathbb{E}|_\xi$ and $\mathfrak{s}_\xi:=\mathfrak{s}|_{\mathbb{E}_\xi}$. As consequences of Propositions \ref{pro6} and \ref{thm3}, we have the following theorem.

\begin{thm}\label{thm:5.4} { Let $(\mathcal{C}, \mathbb{E}_\xi, \mathfrak{s}_\xi)$ be an extriangulated category as above.} If $n$ is a non-negative integer, then the following conditions are equivalent:

\emph{(1)} ${\rm sup}\{\xi$-$\mathcal{G}{\rm pd}A|A\in\mathcal{C}\}\leqslant n$.

\emph{(2)} $\mathcal{P}=((\mathcal{P}(\xi),\mathcal{C}), (\mathcal{GP}(\xi), \mathcal{P}^{\leqslant n}(\xi)))$ is an admissible model structure on $(\mathcal{C}, \mathbb{E}_\xi, \mathfrak{s}_\xi)$, where $\mathcal{P}^{\leqslant n}(\xi)=\{A\in\mathcal{C}|\xi$-${\rm pd}A\leqslant n\}$.
\end{thm}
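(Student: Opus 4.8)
The plan is to read condition (2) through the dictionary of \cite[Section 5]{NP}: asserting that $\mathcal{P}=((\mathcal{P}(\xi),\mathcal{C}),(\mathcal{GP}(\xi),\mathcal{P}^{\leqslant n}(\xi)))$ is an admissible model structure on $(\mathcal{C},\mathbb{E}_\xi,\mathfrak{s}_\xi)$ means precisely that $\mathcal{P}$ is a Hovey twin cotorsion pair there, so everything reduces to verifying or exploiting the cotorsion-pair axioms together with the twin and Hovey conditions, all relative to $\mathbb{E}_\xi$ (every ``conflation'' being an $\mathbb{E}$-triangle in $\xi$). The single most useful observation, valid once (1) is assumed, is that $\mathcal{P}^{\leqslant n}(\xi)=\widetilde{\mathcal{P}}(\xi)$: indeed if $\xi$-${\rm pd}\,A<\infty$ then Proposition \ref{pro6} gives $\xi$-${\rm pd}\,A=\xi$-$\mathcal{G}{\rm pd}\,A$, and the latter is $\leqslant n$ by (1). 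I would isolate this equivalence first, since it collapses the awkward finite bound $n$ into the cleaner class of objects of finite $\xi$-projective dimension.

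For $(1)\Rightarrow(2)$ I would check the four requirements in turn. That $(\mathcal{P}(\xi),\mathcal{C})$ is a cotorsion pair is immediate: $\mathbb{E}_\xi(\mathcal{P}(\xi),\mathcal{C})=0$ because any $\mathbb{E}$-triangle in $\xi$ ending in a $\xi$-projective splits, the left approximation is ``enough $\xi$-projectives'', and the right approximation is the split triangle $C\to C\to 0$. For $(\mathcal{GP}(\xi),\mathcal{P}^{\leqslant n}(\xi))$: the orthogonality $\mathbb{E}_\xi(\mathcal{GP}(\xi),\mathcal{P}^{\leqslant n}(\xi))=0$ follows from Lemma \ref{lem6}, since an $\mathbb{E}$-triangle $L\to B\to G\dashrightarrow$ in $\xi$ with $G\in\mathcal{GP}(\xi)$ is $\mathcal{C}(-,\widetilde{\mathcal{P}}(\xi))$-exact and testing against $L\in\widetilde{\mathcal{P}}(\xi)$ lifts $1_L$; the two approximation sequences are exactly the two $\mathbb{E}$-triangles produced by Proposition \ref{thm3}, which applies to every object because (1) makes every $\xi$-$\mathcal{G}$projective dimension finite; closure of $\mathcal{GP}(\xi)$ under summands is Theorem \ref{thm2}, while closure of $\mathcal{P}^{\leqslant n}(\xi)$ under summands is a routine syzygy/Schanuel argument using that $\mathcal{P}(\xi)$ is summand-closed. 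The twin condition $\mathbb{E}_\xi(\mathcal{P}(\xi),\mathcal{P}^{\leqslant n}(\xi))=0$ is clear from $\xi$-projectivity. For the Hovey condition, a direct computation gives in general $\textrm{Cone}(\mathcal{P}^{\leqslant n}(\xi),\mathcal{P}(\xi))=\mathcal{P}^{\leqslant n+1}(\xi)$ (the middle term being $\xi$-projective makes the left term a first $\xi$-syzygy) and $\textrm{CoCone}(\mathcal{P}^{\leqslant n}(\xi),\mathcal{P}(\xi))=\mathcal{P}^{\leqslant n}(\xi)$ (such a conflation splits since its defining extension lies in $\mathbb{E}_\xi(\mathcal{P}(\xi),-)=0$), so the required equality reduces to $\mathcal{P}^{\leqslant n+1}(\xi)=\mathcal{P}^{\leqslant n}(\xi)$, which is precisely the identification $\mathcal{P}^{\leqslant n}(\xi)=\widetilde{\mathcal{P}}(\xi)$ above.

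For $(2)\Rightarrow(1)$ I would use only the right approximation (axiom (3)) of the cotorsion pair $(\mathcal{GP}(\xi),\mathcal{P}^{\leqslant n}(\xi))$. For a nonzero $A$ it yields an $\mathbb{E}$-triangle $A\to L\to G'\dashrightarrow$ in $\xi$ with $G'\in\mathcal{GP}(\xi)$ and $\xi$-${\rm pd}\,L\leqslant n$. Lemma \ref{pro4} then gives $\xi$-$\mathcal{G}{\rm pd}\,A=\xi$-$\mathcal{G}{\rm pd}\,L$, and Proposition \ref{pro6} bounds the right-hand side by $\xi$-${\rm pd}\,L\leqslant n$; the case $A=0$ is trivial. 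Hence $\sup\{\xi$-$\mathcal{G}{\rm pd}\,A\mid A\in\mathcal{C}\}\leqslant n$, which is (1). Notably this direction needs neither the twin nor the Hovey condition, only axiom (3).

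The main obstacle I anticipate is the Hovey condition and the ``off by one'' it conceals: the naive computations give $\textrm{Cone}=\mathcal{P}^{\leqslant n+1}(\xi)$ and $\textrm{CoCone}=\mathcal{P}^{\leqslant n}(\xi)$, which differ for a general proper class. Recognizing that (1) forces them to coincide, through the identification $\mathcal{P}^{\leqslant n}(\xi)=\widetilde{\mathcal{P}}(\xi)$ coming from Proposition \ref{pro6}, is the real point, and it is also what pins down the sharp bound $n$ rather than $n+1$. A secondary, purely bookkeeping hurdle is confirming summand-closure of $\mathcal{P}^{\leqslant n}(\xi)$ and that Proposition \ref{thm3} delivers approximation triangles of exactly the needed shape, with the relevant terms landing in $\mathcal{P}^{\leqslant n}(\xi)$ and $\mathcal{GP}(\xi)$, including the degenerate cases $A=0$ and $A\in\mathcal{GP}(\xi)$.
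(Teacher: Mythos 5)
Your proposal is correct and follows essentially the same route as the paper: both directions rest on the same ingredients (Lemma \ref{lem6} for the orthogonality, Proposition \ref{thm3} for the approximation triangles, Proposition \ref{pro6} to collapse $\mathcal{P}^{\leqslant n+1}(\xi)$ into $\mathcal{P}^{\leqslant n}(\xi)$ for the Hovey condition, and Lemma \ref{pro4} with Proposition \ref{pro6} for $(2)\Rightarrow(1)$). Your packaging of the Hovey condition as the identity $\textrm{Cone}=\mathcal{P}^{\leqslant n+1}(\xi)$ versus $\textrm{CoCone}=\mathcal{P}^{\leqslant n}(\xi)$ is a slight streamlining of the paper's element-chase through Proposition \ref{thm3}, and your explicit argument for $(2)\Rightarrow(1)$ via axiom (3) correctly fills in what the paper leaves as ``straightforward.''
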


\begin{proof}$(1)\Rightarrow(2)$. Assume that ${\rm sup}\{\xi$-$\mathcal{G}{\rm pd}A|A\in\mathcal{C}\}\leqslant n$. It is clear that $(\mathcal{P}(\xi),\mathcal{C})$ is a cotorsion pair in $(\mathcal{C}, \mathbb{E}_\xi, \mathfrak{s}_\xi)$ and $\mathbb{E}_{\xi}(\mathcal{GP}(\xi),\mathcal{P}^{\leqslant n}(\xi))=0$ by Lemma \ref{lem6}.  We have that $(\mathcal{GP}(\xi), \mathcal{P}^{\leqslant n}(\xi))$  is a cotorsion pair in $(\mathcal{C}, \mathbb{E}_\xi, \mathfrak{s}_\xi)$ by Proposition \ref{thm3}. To prove (2), we only need to check that $\textrm{Cone}(\mathcal{P}^{\leqslant n}(\xi),\mathcal{P}(\xi))$ = $\textrm{CoCone}(\mathcal{P}^{\leqslant n}(\xi),\mathcal{P}(\xi))$ in $(\mathcal{C}, \mathbb{E}_\xi, \mathfrak{s}_\xi)$.

Let $M$ be an object in $\textrm{Cone}(\mathcal{P}^{\leqslant n}(\xi),\mathcal{P}(\xi))$. Then there exists a  conflation
$X \rightarrow Y \rightarrow M$ in $(\mathcal{C}, \mathbb{E}_\xi, \mathfrak{s}_\xi)$ satisfying $X\in{\mathcal{P}^{\leqslant n}(\xi)}$ and $Y\in{\mathcal{P}(\xi)}$. It follows that $M$ $\in{\mathcal{P}^{\leqslant {n+1}}(\xi)}$. Thus $M$ $\in{\mathcal{P}^{\leqslant {n}}(\xi)}$ by Proposition \ref{pro6}, and hence there exists a conflation $M \rightarrow L \rightarrow G$  in $(\mathcal{C}, \mathbb{E}_\xi, \mathfrak{s}_\xi)$ such that $L\in{\mathcal{P}^{\leqslant n}(\xi)}$ and $G\in{\mathcal{GP}(\xi)}$ by Proposition \ref{thm3}. { One can check} that $G$ has finite $\xi$-projective dimension. It follows from Proposition \ref{pro6} that $G$ is $\xi$-projective. So $M$ belongs to $\textrm{CoCone}(\mathcal{P}^{\leqslant n}(\xi),\mathcal{P}(\xi))$, as desired. For the reverse containment, assume that $M$ is an object in $\textrm{CoCone}(\mathcal{P}^{\leqslant n}(\xi),\mathcal{P}(\xi))$. Then there exists a  conflation
$M\rightarrow X \rightarrow Y$ in $(\mathcal{C}, \mathbb{E}_\xi, \mathfrak{s}_\xi)$ satisfying $X\in{\mathcal{P}^{\leqslant n}(\xi)}$ and $Y\in{\mathcal{P}(\xi)}$. It is clear that the conflation $M\rightarrow X \rightarrow Y$  is split. Thus $M$ is in $\mathcal{P}^{\leqslant n}(\xi)$, and hence $M$ belongs to $\textrm{Cone}(\mathcal{P}^{\leqslant n}(\xi),\mathcal{P}(\xi))$.

$(2)\Rightarrow(1)$ is straightforward by noting that $(\mathcal{GP}(\xi), \mathcal{P}^{\leqslant n}(\xi))$  is a cotorsion pair in $(\mathcal{C}, \mathbb{E}_\xi, \mathfrak{s}_\xi)$.
\end{proof}

As a consequence of Theorem \ref{thm:5.4}, we have the following corollary which { generalizes} \cite[Theorem 5.7]{Yang}.

\begin{cor}\label{yang} Let $\mathcal{C}$ be a triangulated category and $\xi$ a  proper class of triangles. If $n$ is a non-negative integer, then the following conditions are equivalent:

\emph{(1)} ${\rm sup}\{\xi$-$\mathcal{G}{\rm pd}A|A\in\mathcal{C}\}\leqslant n$.

\emph{(2)} $\mathcal{P}=((\mathcal{P}(\xi),\mathcal{C}), (\mathcal{GP}(\xi), \mathcal{P}^{\leqslant n}(\xi)))$ is an admissible model { structure on} $(\mathcal{C}, \mathbb{E}_\xi, \mathfrak{s}_\xi)$.
\end{cor}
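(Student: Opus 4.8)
The plan is to obtain Corollary \ref{yang} simply as the specialization of Theorem \ref{thm:5.4} to the triangulated setting, so that essentially all the work lies in checking that the standing hypotheses of Section 5 are met by a triangulated category $\mathcal{C}$ carrying a proper class of triangles $\xi$. First I would invoke Remark \ref{rem:3.4}(3): since $\xi$ is a class of triangles closed under isomorphisms and suspension (in the sense of \cite[Section 2.2]{Bel1}), being a proper class in Beligiannis's sense is equivalent, via the translation of Theorem \ref{thma}, to the three conditions of Definition \ref{def:proper class}, and consequently $(\mathcal{C}, \mathbb{E}_\xi, \mathfrak{s}_\xi)$ is an extriangulated category. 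Here one uses the standard fact (\cite{NP}) that a triangulated category $(\mathcal{C},\Sigma,\Delta)$ underlies an extriangulated category with $\mathbb{E}(C,A)=\mathcal{C}(C,\Sigma A)$, under which a triangle in $\xi$ becomes an $\mathbb{E}$-triangle in $\xi$, an inflation becomes the first map of such a triangle, and saturation becomes the octahedral stability demanded in Section 3.

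Second, I would verify that the remaining standing data of Section 5 are available. Condition (WIC) is automatic in any triangulated category by Example \ref{Ex:4.12}(2), so it need not be imposed by hand. The assumptions that $\mathcal{C}$ has enough $\xi$-projectives and enough $\xi$-injectives are in force throughout Section 5 and are retained here, being exactly the data under which the class $\mathcal{GP}(\xi)$ and the dimension $\xi$-$\mathcal{G}\mathrm{pd}$ are defined. With these in place, the equivalence of (1) and (2) for the extriangulated category $(\mathcal{C},\mathbb{E}_\xi,\mathfrak{s}_\xi)$ is precisely the content of Theorem \ref{thm:5.4}, whose proof feeds only on Propositions \ref{pro6} and \ref{thm3} together with the cotorsion-pair and Hovey-twin-cotorsion-pair formalism of Definition \ref{df:hovey-cotorsion pair}; all of these hold in full extriangulated generality and therefore apply verbatim. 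Thus no new argument is required beyond citing Theorem \ref{thm:5.4}.

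The point to be careful about, rather than a genuine obstacle, is the bookkeeping in the translation step: one must confirm that the notions entering the admissible model structure of Theorem \ref{thm:5.4}, namely cofibrations, fibrations, cones and cocones formed inside $(\mathcal{C},\mathbb{E}_\xi,\mathfrak{s}_\xi)$, specialize to the expected relative-triangulated notions, so that the twin cotorsion pair $((\mathcal{P}(\xi),\mathcal{C}),(\mathcal{GP}(\xi),\mathcal{P}^{\leqslant n}(\xi)))$ produced abstractly is indeed the one asserted in the statement. Once this identification is made the corollary is immediate, and it refines \cite[Theorem 5.7]{Yang}: Yang treats the absolute case in which $\xi$ consists of all triangles and $\mathcal{GP}(\xi)$ is the usual class of Gorenstein projective objects, whereas the present result allows $\xi$ to be an arbitrary proper class of triangles.
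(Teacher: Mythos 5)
Your proposal is correct and follows exactly the paper's route: the corollary is obtained by specializing Theorem \ref{thm:5.4} to the triangulated case, using that Condition (WIC) holds automatically there (Example \ref{Ex:4.12}(2)) and that a proper class of triangles yields the extriangulated structure $(\mathcal{C},\mathbb{E}_\xi,\mathfrak{s}_\xi)$ via Theorem \ref{thma} and Remark \ref{rem:3.4}(3). The only quibble is your closing description of \cite{Yang} as the ``absolute case where $\xi$ is all triangles''---Yang also works with a proper class of triangles, and the generalization lies elsewhere---but this does not affect the validity of the argument.
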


\section*{\bf Acknowledgments}

This work was carried out while the corresponding
author was visiting at Universit${\rm\acute{e}}$ de Sherbrooke with a support by China Scholarship Council. He thanks Prof. Shiping Liu and the faculty of D${\rm\acute{e}}$partement de Math${\rm\acute{e}}$matiques for their hospitality. The authors would like to thank the referee for reading the paper carefully and for many suggestions on mathematics and English expressions.

\renewcommand\refname{\bf References}

\end{document}